\documentclass[11pt]{article}
\usepackage{epsf,amsmath,amsfonts,amsthm,graphicx,color}
\usepackage{stmaryrd,comment}
\numberwithin{equation}{section}

\begin{document}

\theoremstyle{plain}
\newtheorem{Lemma}{Lemma}[section]
\newtheorem{Prop}[Lemma]{Proposition}
\newtheorem{Thm}[Lemma]{Theorem}
\newtheorem{Cor}[Lemma]{Corollary}

\theoremstyle{definition}
\newtheorem{Def}[Lemma]{Definition}
\newtheorem{Rk}[Lemma]{Remark}
\newtheorem{Example}[Lemma]{Example}
\newtheorem{Exercise}[Lemma]{Exercise}

\newcommand{\Natural}{\mbox{${\bf N}$}}
\newcommand{\Integer}{\mbox{${\bf Z}$}}
\newcommand{\Real}{\mbox{${\bf R}$}}
\newcommand{\Complex}{\mbox{${\bf C}$}}

\newcommand{\Eps}{\varepsilon}

\newcommand{\Sfrac}[2]{\mbox{\small$\frac{#1}{#2}$\normalsize}}
\newcommand{\Half}{\Sfrac{1}{2}}
\newcommand{\Vect}[1]{{\bf #1}}
\newcommand{\Grad}[1]{\nabla #1}
\newcommand{\Gradp}[1]{\nabla' #1}
\newcommand{\Gradx}[1]{\nabla_x #1}
\newcommand{\Gradxp}[1]{\nabla_{x'} #1}
\newcommand{\GradAlphap}[1]{\nabla_{\alpha}' #1}
\newcommand{\Div}[1]{\text{div}\left[#1\right]}
\newcommand{\Divp}[1]{\text{div}'\left[#1\right]}
\newcommand{\Divx}[1]{\text{div}_x \left[#1\right]}
\newcommand{\Divxp}[1]{\text{div}_{x'} \left[#1\right]}
\newcommand{\Curl}[1]{\nabla \times #1}
\newcommand{\CurlOp}[1]{\nabla \times \left[#1\right]}
\newcommand{\Laplacian}[1]{\Delta #1}
\newcommand{\Laplacianp}[1]{\Delta' #1}
\newcommand{\Laplacianx}[1]{\Delta_x #1}
\newcommand{\Laplacianxp}[1]{\Delta_{x'} #1}
\newcommand{\Biharmonic}[1]{\Delta^2 #1}
\newcommand{\FT}[1]{{\cal F} \left\{ #1 \right\}}
\newcommand{\FTI}[1]{{\cal F}^{-1} \left\{ #1 \right\}}
\newcommand{\Variation}[2]{\delta_{#2} #1}

\newcommand{\Norm}[2]{\left\|#1\right\|_{#2}}
\newcommand{\LeftNorm}[1]{\left\|#1\right.}
\newcommand{\RightNorm}[2]{\left.#1\right\|_{#2}}
\newcommand{\SupNorm}[1]{\left|#1\right|_{L^{\infty}}}
\newcommand{\HolderNorm}[2]{\left|#1\right|_{C^{#2}}}
\newcommand{\SobNorm}[2]{\left\|#1\right\|_{H^{#2}}}

\newcommand{\InnerProd}[2]{\left\langle#1,#2\right\rangle}
\newcommand{\DotProd}[2]{\left\langle#1,#2\right\rangle}
\newcommand{\Abs}[1]{\left|#1\right|}
\newcommand{\Mod}[1]{\left|#1\right|}
\newcommand{\Angle}[1]{\langle #1 \rangle}
\newcommand{\RealPart}[1]{\text{Re\{}#1\text{\}}}
\newcommand{\ImagPart}[1]{\text{Im\{}#1\text{\}}}
\newcommand{\Null}[1]{\mbox{${\cal N}$}(#1)}
\newcommand{\Ran}[1]{\text{ran}(#1)}
\newcommand{\Ker}[1]{\text{ker}(#1)}
\newcommand{\Dim}[1]{\text{dim}(#1)}
\newcommand{\Rank}[1]{\text{rank}(#1)}
\newcommand{\Det}[1]{\mbox{det} #1}
\newcommand{\Span}[1]{\text{span}(#1)}
\newcommand{\sgn}{\text{sgn}}

\newcommand{\sech}{\mbox{$\mathrm{sech}$}}
\newcommand{\csch}{\mbox{$\mathrm{csch}$}}

\newcommand{\Intersect}{\cap}
\newcommand{\Union}{\cup}

\newcommand{\dftl}[1]{\; d#1}
\newcommand{\dV}{\dftl{V}}
\newcommand{\dS}{\dftl{S}}
\newcommand{\dx}{\dftl{x}}
\newcommand{\dy}{\dftl{y}}
\newcommand{\dz}{\dftl{z}}
\newcommand{\ds}{\dftl{s}}
\newcommand{\dt}{\dftl{t}}
\newcommand{\du}{\dftl{u}}
\newcommand{\dsigma}{\dftl{\sigma}}

\newcommand{\BigOh}[1]{\mathcal{O}(#1)}
\newcommand{\LittleOh}[1]{\mathcal{o}(#1)}

\newcommand{\px}{\partial_x}
\newcommand{\py}{\partial_y}
\newcommand{\pz}{\partial_z}
\newcommand{\pt}{\partial_t}

\newcommand{\sumn}{\sum_{n=0}^{\infty}}
\newcommand{\sumno}{\sum_{n=1}^{\infty}}
\newcommand{\sumk}{\sum_{k=-\infty}^{\infty}}
\newcommand{\sump}{\sum_{p=-\infty}^{\infty}}
\newcommand{\sumq}{\sum_{q=-\infty}^{\infty}}
\newcommand{\sumr}{\sum_{r=0}^{\infty}}

\newcommand{\summ}{\sum_{m=0}^{\infty}}

\newcommand{\be}{\begin{equation}}
    \newcommand{\ee}{\end{equation}}
\newcommand{\bes}{\begin{equation*}}
    \newcommand{\ees}{\end{equation*}}
\newcommand{\bse}{\begin{subequations}}
    \newcommand{\ese}{\end{subequations}}

\newcommand{\Schrodinger}{Schr\"odinger}
\newcommand{\Holder}{H\"older}
\newcommand{\Calderon}{Calder\'{o}n}
\newcommand{\Pade}{Pad\'{e}}

\newcommand{\Question}[1]{\fbox{ {\bf Q: #1} }}
\newcommand{\Corrected}[1]
{\noindent \rule{\linewidth}{.75mm} \\ {\bf CORRECTED UP TO HERE (#1)} \\ \rule{\linewidth}{.75mm}}
\newcommand{\void}[1]{}

\newcommand{\RevOne}[1]{\textcolor{red}{#1}}
\newcommand{\RevTwo}[1]{\textcolor{blue}{#1}}
\newcommand{\RevThree}[1]{\textcolor{green}{#1}}

\newcommand{\cL}{\mathcal{L}}
\newcommand{\cP}{\mathcal{P}}

\newcommand{\ku}{k^{(u)}}
\newcommand{\kv}{k^{(v)}}
\newcommand{\kw}{k^{(w)}}
\newcommand{\km}{k^{(m)}}
\newcommand{\kz}{k_0}
\newcommand{\gammau}{\gamma^{(u)}}
\newcommand{\gammav}{\gamma^{(v)}}
\newcommand{\gammaw}{\gamma^{(w)}}
\newcommand{\gammam}{\gamma^{(m)}}
\newcommand{\epsu}{\epsilon^{(u)}}
\newcommand{\epsv}{\epsilon^{(v)}}
\newcommand{\epsw}{\epsilon^{(w)}}
\newcommand{\epsm}{\epsilon^{(m)}}
\newcommand{\epsz}{\epsilon_0}
\newcommand{\muu}{\mu^{(u)}}
\newcommand{\muv}{\mu^{(v)}}
\newcommand{\muw}{\mu^{(w)}}
\newcommand{\mum}{\mu^{(m)}}
\newcommand{\muz}{\mu_0}
\newcommand{\chiu}{\chi^{(u)}}
\newcommand{\chiv}{\chi^{(v)}}
\newcommand{\chiw}{\chi^{(w)}}
\newcommand{\chim}{\chi^{(m)}}
\newcommand{\bchi}{\bar{\chi}}
\newcommand{\etaz}{\eta_0}
\newcommand{\sigmaz}{\sigma_0}
\newcommand{\Jz}{J_0}
\newcommand{\Vz}{V_0}
\newcommand{\bgammaell}{\bar{\gamma}_{\ell}}
\newcommand{\bgammar}{\bar{\gamma}_r}
\newcommand{\uk}{\underline{k}}

\newcommand{\tM}{\tilde{M}}

\newcommand{\tdelta}{\tilde{\delta}}

\newcommand{\Jump}[1]{\left\llbracket #1 \right\rrbracket}

\newcommand{\AAEE}{A^E}
\newcommand{\AAHH}{A^H}

%
%

\title{A High--Order Perturbation of Envelopes (HOPE) Method for 
  Electromagnetic Scattering by Periodic Inhomogeneous Chiral Media}
\author{
David P.\ Nicholls, \\
Department of Mathematics, Statistics, and Computer Science, \\
University of Illinois at Chicago, \\
Chicago, IL 60607
\and
Liet Vo, \\
School of Mathematical and Statistical Sciences, \\
The University of Texas Rio Grande Valley, \\
Edinburg, TX 78539
}

\maketitle

\begin{abstract}
Chirality plays an important role in many optical phenomena
and it is crucial to have efficient and accurate numerical
algorithms to simulate solutions in this setting. In this
paper we discuss a High--Order Spectral method coupled to
geometric regular perturbation theory, which results in just
such a method. We view the chirality of the laterally
periodic, constant permittivity/permeability structure
as deviating from a background value, and demonstrate
analyticity of the field scattered by a plane electromagnetic
wave with respect to this deformation. This
analyticity is not only with respect to chirality
deformations of arbitrarily large real size, but also
joint in the spatial variables. We also show how the
High--Order Perturbation of Envelopes (HOPE) recursions
which we used to establish these results can be implemented
as a rapid, robust, and reliable numerical algorithm.
\end{abstract}

%
%

\section{Introduction}
\label{Sec:Intro}

Chirality is a crucial property in chemistry \cite{ChiralityChemistry2024},
biology \cite{ChiralityBiology2019}, and physics \cite{ChiralityPhysics2023}
which occurs in nature
(e.g., hands, shells, amino acids, DNA/RNA)
and has been exploited by humans to great advantage
(e.g., gloves, spiral staircases, screws).
A chiral object is one that cannot be superposed onto its
reflection (enantiomer), with the classic example being
a hand (whose Greek name gives us the term ``chiral'') leading to the 
natural classification of such an object
by its ``handedness.'' Sometimes, enantiomers (an object and
its reflection) exhibit quite different
behaviors, with a most tragic example giving rise to the 
thalidomide disaster of the late 1950s and early 1960s
\cite{Tokunaga18}. Thalidomide has two non-superposable enantiomers:
(S) which is teratogenic and (R) which is therapeutic \cite{Blaschke79}.
In the 1950s the latter was administered to animals and the successful experiments
led to its rapid prescription
in humans, notably pregnant women for treatment of morning sickness.
However, in humans this enantiomer rapidly interconverts \textit{in vivo}
leading to disastrous consequences. The long--standing 
``thalidomide paradox'' is why this racemization of thalidomide
in humans but not the experimental trial animals?
Tokunaga \textit{et al} \cite{Tokunaga18} put forward a hypothesis, 
but, regardless of
the resolution, it is clearly of the highest importance to be able to
discriminate amongst enantiomers of chemical species. (Notably, the
reputation of thalidomide has been rehabilitated in recent years with
the discovery of its beneficial effects in treating leprosy and certain
types of cancers \cite{GroganWinston23}.)

For this reason we take up the crucially important question of the
sensitive and robust detection of chiral molecules \cite{Wang25},
though our developments can be applied to more general scenarios
involving the optical response of chiral materials; see 
Mun \textit{et al} \cite{Mun20} and Khaliq \textit{et al}
\cite{Khaliq23} for instance. Regarding chiral sensors, Wang
\textit{et al} \cite{Wang25} provide an exhaustive overview
of a number of different strategies with the common theme being
that great
advantage can be gained by incorporating chirality into the sensor
itself. Amongst the many schemes outlined, and of particular interest
to us, are the ones utilizing (surface) plasmonic responses. For instance, those
based upon chiral metasurfaces formed from chiral gold nanorods \cite{Li25},
chiral gold nanorods in a palladium nanostructure \cite{Luo25}, 
arrays of chiral gold helicoid crystals \cite{Kim22}, and
chiral nanoparticles exhibiting localized surface plasmon resonances
(LSPRs) \cite{Jeong16}. A particularly compelling application of a
Surface Plasmon Resonance (SPR)
sensor based upon chiral gold nanorods is given in Kumar \textit{et al}
\cite{Kumar18} who use it to detect Parkinson's disease.


While there are several approaches to building an SPR sensor
\cite{Homola08}, a convenient and widely used method is to
periodically pattern a structure at the nanoscale. This
precise patterning gives one path to overcoming the momentum mismatch
required to excite a surface plasmon \cite{Raether88,Maier07,EnochBonod12}.
For this reason we focus our attention upon the interaction of
linear electromagnetic waves with laterally periodic structures
which, of course, arise in many other areas in science and engineering
(e.g., underwater acoustics \cite{TaroudakisMakrakis01},
remote sensing \cite{TKS85}, and nondestructive testing 
\cite{S02}).

With the overwhelming technological importance of these periodically
patterned devices, it is not surprising to learn that the entire
host of classical numerical schemes has been brought to the task
of their simulation
\cite{GallinetButetMartin15}. More specifically, in the context
of electromagnetic scattering, one can consider the 
Finite Difference Time Domain (FDTD) \cite{TafloveHagness00} and 
Finite Difference Frequency Domain (FDFD) methods \cite{Rumpf22},
the Discontinuous Galerkin (DG) \cite{BuschKonigNiegmann11}
and Finite Element Methods (FEM) \cite{Jin02},
the Volume Integral Equation (VIE) algorithm \cite{MartinPiller98},
and the Discrete Dipole Approximation \cite{DraineFlatau94}.
More specifically, for electromagnetics applications are 
the Rigorous Coupled--Wave Analysis (RCWA) method
\cite{MoharamGaylord81,MoharamPommetGrannGaylord95,LalanneMorris96},
the Fourier Modal Method (FMM) \cite{KimParkLee12},
and Planewave Eigensolvers \cite{MRBJA93,JohnsonJoannopoulos01}.

As we shall see, the response of a chiral medium to illumination by
linear electromagnetic waves is not well--modeled by the classical
constitutive relations. Several alternative models have been put forth
and the Drude--Born--Federov (DBF) relations \eqref{Eqn:DBF:Constit} are
a popular choice \cite{LVV89Book,Lakhtakia94} that we pursue. The resulting
PDEs require specialized treatment and a number of authors have
made significant contributions to a rigorous weak formulation of
the problem and its subsequent FEM/BEM simulation \cite{Stratis99}.
We point out early papers on bounded chiral obstacles in two and
three dimensions
\cite{Rojas94,AthanasiadisMartinStratis99,AthanasiadisCostakisStratis00},
but the careful rigorous analysis for the periodic gratings we consider
began with the pioneering work of Ammari \& Bao 
\cite{AmmariBao98,AmmariBao03,AmmariBao08}. This was later pursued by 
Zhang and collaborators for bounded obstacles and periodic gratings
\cite{ZhangMa05,ZhangMa07,ZhangGuoGongWang12}. While not the topic of
our current investigations, there have been additional developments on
the inverse problem of detecting a chiral obstacle given far field
measurements (see \cite{deMonvelBoutetShepelsky97,Gerlach99,PotthastStratis03}
and \cite{Nguyen16,FengWangZhang21,GuoWang22}).

In this paper we take a slightly different approach to the numerical
simulation of periodic chiral structures, which follows the 
High--Order Perturbation of Envelopes (HOPE) methodology we developed
for solving the Helmholtz equation \cite{Nicholls19b} and 
Maxwell equations \cite{NichollsVo23,NichollsVo24} which arise in scattering
from achiral media. As in \cite{Nicholls19b}, we focus on $y$--invariant
structures illuminated transversely so that the solution of a scalar
Helmholtz equation delivers the vectorial scattered field;
see \S~\ref{Sec:2DChirality}. (We will return to the more general three--dimensional
case in future work.) This algorithm takes a perturbative point of view
by considering chirality functions, $\chi(x,z)$ (see \S~\ref{Sec:Govern}),
which are laterally periodic deviations from a trivial one, e.g.,
\bes
\chi(x,z) = \bchi - \delta X(x,z),
\quad
X(x+d,z) = X(x,z),
\ees
where $X$ is the chirality ``envelope,'' and then conducts regular
perturbation theory. As in our previous work
\cite{Nicholls19b,NichollsVo23,NichollsVo24} we will show that the
scattered field depends \textit{analytically} upon the parameter $\delta$
by directly estimating the $m$--th Taylor correction in appropriate
Sobolev spaces. By modifying the nature of our deformation slightly
we will rigorously demonstrate that this region of analyticity
contains a neighborhood of the \textit{entire} real axis (giving an
analytic continuation of our results), and that the field depends
\textit{jointly} analytically in both perturbation and spatial variables.
Finally, we will show how the resulting recursions
can be numerically approximated to deliver a numerical algorithm
of impressive accuracy and reliability.

The paper is organized as follows: In \S~\ref{Sec:Govern} we present
the governing equations with a discussion of the DBF constitutive
relations that model the chiral effects. In \S~\ref{Sec:TransBC} we
describe the transparent boundary conditions we utilize to not only
enforce the outgoing nature of the scattered waves, but also truncate
the problem to one of finite extent. In \S~\ref{Sec:Chiral} we outline
considerations special to chiral layers including a discussion of
decoupling the governing equations by polarization in \S~\ref{Sec:Decouple},
the Beltrami equations which govern these polarizations in 
\S~\ref{Sec:Transverse}, and the scalar Helmholtz equations which
the transverse components satisfy in \S~\ref{Sec:2DChirality}. 
In \S~\ref{Sec:HOPE} we derive our HOPE recursions in this context,
while, in preparation for our theoretical developments, we define our
function spaces in \S~\ref{Sec:FcnSpaces}. We rigorously establish analyticity
and analytic continuation in \S~\ref{Sec:AnalCont}, while we prove
joint analyticity in \S~\ref{Sec:Joint}. We close with our numerical results
in \S~\ref{Sec:NumRes} with discussion of our implementation in
\S~\ref{Sec:Implementation} and the triply layered medium problem we consider
in \S~\ref{Sec:LayMedia}. In \S~\ref{Sec:Conv} we present results of
the approximations produced by our implementation as compared to the
exact solution of the triply layered model. While this shows convergence,
we revisited these in \S~\ref{Sec:ConvSmooth} in the context of smoothed
solutions to display the aspirational exponential rate of convergence of
our algorithm. In \S~\ref{Sec:Conc} we give concluding remarks and
future directions.

%
%

\section{Governing Equations}
\label{Sec:Govern}

We consider structures whose response to illumination by
electromagnetic radiation is modeled by the
the time--harmonic Maxwell equations (time dependence 
$\exp(-i \omega t)$ factored out), in the absence of
currents and sources,
\begin{gather*}
\Curl{E} - i \omega B = 0, 
\quad
\Div{B} = 0, \\
\Curl{H} + i \omega D = 0,
\quad
\Div{D} = 0,
\end{gather*}
among the electric displacement, $D$, the magnetic
induction, $B$, the electric field, $E$, and the
magnetic field, $H$, \cite{Jackson75}.
To both model the effects of chirality and close
the Maxwell equations, we specify the Drude--Born--Fedorov (DBF)
constitutive relations
\bse
\label{Eqn:DBF:Constit}
\begin{align}
D = \epsilon(x,y,z) \left\{ E + \chi(x,y,z) \Curl{E} \right\}, \\
B = \mu(x,y,z) \left\{ H + \chi(x,y,z) \Curl{H} \right\},
\end{align}
\ese
\cite{LVV89Book,Lakhtakia94}. These equations feature
parameters which measure the electric, magnetic, and chiral responses,
namely the permittivity, $\epsilon(x,y,z)$, the permeability,
$\mu(x,y,z)$, and the chirality, $\chi(x,y,z)$. With these
the Maxwell equations now read (with the $(x,y,z)$ dependence
suppressed)
\begin{align*}
\Curl{E} - i \omega \mu \left\{ H + \chi \Curl{H} \right\} = 0, \\
\Curl{H} + i \omega \epsilon \left\{ E + \chi \Curl{E} \right\} = 0.
\end{align*}

It is very useful to find formulas for the curls of the electric and
magnetic fields in terms of these fields themselves. Writing these
Maxwell equations as
\bes
\begin{pmatrix} 1 & -i \omega \mu \chi \\
    i \omega \epsilon \chi & 1 \end{pmatrix}
\begin{pmatrix} \Curl{E} \\ \Curl{H} \end{pmatrix}
= \begin{pmatrix} i \omega \mu H \\ -i \omega \epsilon E \end{pmatrix},
\ees
we can solve for $\Curl{E}$ and $\Curl{H}$, yielding
\be
\label{Eqn:DBF:Curl}
\begin{pmatrix} \Curl{E} \\ \Curl{H} \end{pmatrix}
= J \begin{pmatrix} E \\ H \end{pmatrix},
\quad
J := \frac{1}{(1 - \chi^2 k^2)} \begin{pmatrix} \chi k^2 & i \omega \mu \\
  -i \omega \epsilon & \chi k^2 \end{pmatrix},
\ee
where $k^2 = \omega^2 \epsilon \mu$,
which we term the \textit{Maxwell--DBF equation}.
For obvious reasons we now make the assumption that 
$(1 - \chi^2 k^2) \neq 0$ which is typically enforced with
\be
\label{Eqn:ChiK}
-1 < \chi k < 1.
\ee

The permittivity, permeability, and chirality are all taken
to be $d_x \times d_y$ bi--periodic
\bes
\{ \epsilon, \mu, \chi \}(x,y,z)
  = \begin{cases} \{ \epsu, \muu, 0 \}, & z > h, \\
  \{ \epsv, \muv, \chiv \}(x,y,z), & -h < z < h, \\
  \{ \epsw, \muw, 0\}, & z < -h,
  \end{cases}
\ees
where $\epsu, \epsw, \muu, \muw \in \Real^+$,
\bes
\{ \epsv, \muv, \chiv \}(x+d_x,y+d_y,z)
  = \{ \epsv, \muv, \chiv \}(x,y,z),
\ees
and
\begin{align*}
\lim_{z \rightarrow h-} \{ \epsv, \muv, \chiv \}(x,y,z)
  & = \{ \epsu, \muu, 0 \}, \\
\lim_{z \rightarrow (-h)+} \{ \epsv, \muv, \chiv \}(x,y,z)
  & = \{ \epsw, \muw, 0 \}.
\end{align*}
For future use $(\km)^2 = \epsm \mum \omega^2$, $m \in \{ u, v, w \}$.

As in our previous contributions, we consider such structures
illuminated from above by plane--wave incident radiation of the
form
\bes
E^{\text{inc}}(x,y,z) = \AAEE
  \exp(i \alpha x + i \beta y - i \gammau z),
\quad
H^{\text{inc}}(x,y,z) = \AAHH
  \exp(i \alpha x + i \beta y - i \gammau z),
\ees
where
\bes
\Abs{\AAEE} = \Abs{\AAHH} = 1.
\ees
In order to solve the governing equations in the achiral
($\chi=0$) upper layer, $\{ z > h \}$, we ask that
\bes
\uk \times \AAEE = i \omega \muu \AAHH,
\quad
\uk \times \AAHH = -i \omega \epsu \AAEE,
\quad
\uk \cdot \AAEE = 0,
\quad
\uk \cdot \AAHH = 0,
\ees
where
\bes
\uk = \begin{pmatrix} \alpha \\ \beta \\ -\gammau \end{pmatrix}
  = \ku \begin{pmatrix} \sin(\theta) \cos(\phi) \\
  \sin(\theta) \sin(\phi) \\ \cos(\theta) \end{pmatrix},
\ees
and $(\phi,\theta)$ are the angles of incidence.

%
%
	
\section{Transparent Boundary Conditions}
\label{Sec:TransBC}

We now give a brief reprise of our description in
\cite{Nicholls19b,NichollsVo23,NichollsVo24} of the transparent boundary 
conditions we enforce. (See also the text of Bao \& Li \cite{BaoLi22}.)
These allow us to not only reduce the infinite domain
to one of finite size, but also rigorously specify far--field boundary
conditions. In the upper domain $\{ z > h \}$ we seek a solution 
in the sum of the incident radiation and an upward propagating
(reflected) component, e.g.,
\begin{align}
E & = E^{\text{inc}} + E^{\text{refl}} \notag \\
  & = \AAEE \exp(i \alpha x + i \beta y - i \gammau z) 
    + c_{0,0} \exp(i \alpha x + i \beta y + i \gammau (z-h)) \notag \\
  & \quad 
    + \sum_{(p,q) \neq (0,0)} \hat{u}_{p,q} \exp(i \alpha_p x 
	+ i \beta_q y + i \gammau_{p,q} (z-h)),
\label{Eqn:Rayleigh:u}
\end{align}
\cite{Petit80,Yeh05} where
\begin{gather*}
\alpha_p = \alpha + (2 \pi/d_x) p,
\quad
\beta_q = \beta + (2 \pi/d_y) q,
\\
\gammam_{p,q} := \begin{cases}
  \sqrt{(\km)^2 - \alpha_p^2 - \beta_q^2},
  & \alpha_p^2 + \beta_q^2 \leq \epsilon^{(m)} k_0^2, \\
  i \sqrt{\alpha_p^2 + \beta_q^2 - (\km)^2},
  & \alpha_p^2 + \beta_q^2 > \epsilon^{(m)} k_0^2,
\end{cases}
\quad
m \in \{ u, w \},
\end{gather*}
since $\epsu, \epsw, \muu, \muw \in \Real^+$.
By choosing $c_{0,0} = \hat{u}_{0,0} - \AAEE \exp(-i \gammau h)$, 
which implies that $\hat{u}_{0,0} = c_{0,0} + \AAEE \exp(-i \gammau h)$,
we have
\begin{align*}
E & = \AAEE \exp(i \alpha x + i \beta y - i \gammau z) 
  - \AAEE \exp(i \alpha x + i \beta y + i \gammau (z-2h)) \\
  & \quad
  + \sump \sumq \hat{u}_{p,q} 
  \exp(i \alpha_p x + i \beta_q y + i \gammau_{p,q} (z-h)),
\end{align*}
and $E(x,y,h) = u(x,y)$. Now, one can show that 
\begin{align*}
\pz E & = (-i \gammau) \AAEE \exp(i \alpha x + i \beta y - i \gammau z) \\
  & \quad 
  - (i \gammau) \AAEE \exp(i \alpha x + i \beta y + i \gammau (z-2h)) \\
  & \quad
  + \sump \sumq (i \gammau_{p,q}) \hat{u}_{p,q} 
  \exp(i \alpha_p x + i \beta_q y + i \gammau_{p,q} (z-h)),
\end{align*}
so that
\begin{align*}
-\pz E(x,y,h) 
  & = (i \gammau) \AAEE \exp(i \alpha x + i \beta y - i \gammau h) \\
  & \quad
  + (i \gammau) \AAEE \exp(i \alpha x + i \beta y + i \gammau (-h)) \\
  & \quad
  + \sump \sumq (-i \gammau_{p,q}) \hat{u}_{p,q} 
  \exp(i \alpha_p x + i \beta_q y).
\end{align*}
Defining the function
\be
\label{Eqn:phi:Def}
\phi(x,y) := \left( 2 i \gammau \exp(-i \gammau h) \right) \AAEE 
  \exp(i \alpha x + i \beta y),
\ee
and the order--one Fourier multiplier (the externally directed
Dirichlet--Neumann operator for the Maxwell equation on $\{ z > h \}$)
\bes
T_u[ \psi ] := \sump \sumq (-i \gammau_{p,q}) 
  \hat{\psi}_{p,q} \exp(i \alpha_p x + i \beta_q y),
\ees
then we see that we can express the Upward Propagating Condition (UPC)
\cite{ArensHab} exactly with the boundary condition
\bes
-\pz E(x,y,h) - T_u[ E(x,y,h) ] = \phi(x,y).
\ees
In an analogous fashion we discover that
\bes
-\pz H(x,y,h) - T_u[ H(x,y,h) ] = \psi(x,y),
\ees
where
\be
\label{Eqn:psi:Def}
\psi(x,y) := \left( 2 i \gammau \exp(-i \gammau h) \right) \AAHH 
  \exp(i \alpha x + i \beta y).
\ee
	
Similarly, in $\{ z < -h \}$ we seek a solution
which is purely downward propagating (transmitted)
\be
\label{Eqn:Rayleigh:w}
E = E^{\text{trans}}
  = \sump \sumq \hat{w}_{p,q} 
  \exp(i \alpha_p x + i \beta_q y - i \gammaw_{p,q} (z+h)),
\ee
\cite{Petit80,Yeh05}.
Clearly $E(x,y,-h) = w(x,y)$ and, using
\bes
\pz E(x,y,-h) = \sump \sumq (-i \gammaw_{p,q}) \hat{w}_{p,q}
  \exp(i \alpha_p x + i \beta_q y),
\ees
we define the analogous order--one Fourier multiplier (again, the
externally directed Dirichlet--Neumann operator for the Maxwell 
equation on $\{ z < -h \}$)
\bes
T_w[ \psi ] := \sump \sumq (-i \gammaw_{p,q}) \hat{\psi}_{p,q}
  \exp(i \alpha_p x + i \beta_q y).
\ees
We can state the Downward Propagating Condition (DPC) \cite{ArensHab}
transparently using
\bes
\pz E(x,y,-h) - T_w[ E(x,y,-h) ] = 0,
\ees
and
\bes
\pz H(x,y,-h) - T_w[ H(x,y,-h) ] = 0.
\ees

Gathering our full set of governing equations we find
the following problem to solve,
\bse
\label{Eqn:DBF}
\begin{align}
& \begin{pmatrix} \Curl{E} \\ \Curl{H} \end{pmatrix}
  = J \begin{pmatrix} E \\ H \end{pmatrix},
  && -h < z < h, \label{Eqn:DBF:a} \\
& -\pz E - T_u[ E ] = \phi, 
  \quad
  -\pz H - T_u[ H ] = \psi,
  && z = h,
  \label{Eqn:DBF:b} \\
& \pz E - T_w[ E ] = 0,
  \quad
  \pz H - T_w[ H ] = 0,
  && z = -h,
  \label{Eqn:DBV:c} \\
& E(x+d_x,y+d_y,z) = \exp(i \alpha d_x + i \beta d_y) E(x,y,z), 
  \label{Eqn:DBF:d} \\
& H(x+d_x,y+d_y,z) = \exp(i \alpha d_x + i \beta d_y) H(x,y,z).
  \label{Eqn:DBF:e}
\end{align}
\ese

%
%

\section{A Chiral Slab in Vacuum}
\label{Sec:Chiral}

We begin our study by considering a periodic chiral slab
in a vacuum where we have constant values of the permittivity and 
permeability
\bes
\epsm = \epsz,
\quad
\mum = \muz,
\quad
\km = \kz = \sqrt{\epsz \muz} \omega,
\quad
m \in \{ u, w \}.
\ees
Additionally we choose $\epsv = \epsz$ and $\muv = \muz$
for simplicity, though, any constant value can be accommodated.
As we will see, this assumption enables a change of variables
which not only delivers a significantly
simpler set of PDEs, but also reveals a particularly insightful
physical interpretation.

%
%

\subsection{Decoupling by Polarization}
\label{Sec:Decouple}

In this case of \textit{constant} values of the permeability
and permittivity, we can decouple the solutions of the Maxwell
equations by polarization (left-- and right--handed).
To demonstrate the decoupling we appeal to the Maxwell--DBF
equations in $\{ -h < z < h \}$, \eqref{Eqn:DBF:a},
in this setting
\bes
\begin{pmatrix} \Curl{E} \\ \Curl{H} \end{pmatrix}
= \Jz \begin{pmatrix} E \\ H \end{pmatrix},
\quad
\Jz := \frac{1}{(1 - \chi^2 \kz^2)}
  \begin{pmatrix} \chi \kz^2 & i \omega \muz \\
    -i \omega \epsz & \chi \kz^2 \end{pmatrix},
\ees
where we enforce $-1 < \chi \kz < 1$, c.f. \eqref{Eqn:ChiK}.
To decouple this system, we compute the eigenvalues and eigenvectors
of the matrix $\Jz$, which are readily shown to be
\bes
\lambda_L = \frac{\kz (\chi \kz + 1)}{1 - \chi^2 \kz^2}
  = \left( \frac{\kz}{1 - \chi \kz} \right) =: \kappa^L,
\quad
\xi^L = \begin{pmatrix} 1 \\ -i/\etaz \end{pmatrix},
\ees
where $\etaz = \sqrt{\muz/\epsz}$ is the vacuum impedance, and
\bes
\lambda_R = \frac{\kz (\chi \kz - 1)}{1 - \chi^2 \kz^2}
  = - \left( \frac{\kz}{1 + \chi \kz} \right) =: -\kappa^R,
\quad
\xi^R = \begin{pmatrix} -i \etaz \\ 1 \end{pmatrix},
\ees
and $\kz^2 = \kappa^L \kappa^R$.
With these we can form the diagonal and diagonalizing matrices
\bes
\Lambda = \begin{pmatrix} \kappa^L & 0 \\
    0 & -\kappa^R \end{pmatrix},
\quad
\Vz = \begin{pmatrix} 1 & -i \etaz \\ -i/\etaz & 1 \end{pmatrix},
\quad
\Vz^{-1} = \begin{pmatrix} 1/2 & i \etaz/2 \\ 
  i/(2 \etaz) & 1/2 \end{pmatrix}.
\ees
Setting
\be
\label{Eqn:Bohren}
\begin{pmatrix} L \\ R \end{pmatrix}
= \Vz^{-1} \begin{pmatrix} E \\ H \end{pmatrix}
\quad
\iff
\quad
\begin{pmatrix} E \\ H \end{pmatrix} 
= \Vz \begin{pmatrix} L \\ R \end{pmatrix},
\ee
we have the Bohren Transformation \cite{LVV89Book,Lakhtakia94}.
With these, it is not difficult to show that $L$ and $R$ satisfy,
\be
\label{Eqn:Beltrami}
\Curl{L} = \kappa^L L, 
\quad
\Curl{R} = -\kappa^R R, 
\ee
respectively, establishing them as Beltrami Fields \cite{Lakhtakia94}.
The suggestive notation now becomes clear: The field 
$L$ is Left Circularly Polarized (LCP) while $R$ is Right Circularly
Polarized (RCP). We notice that this change of variables is independent
of the chirality, $\chi$, which could be zero, non--zero, or even
non--constant. This gives the classical observation that all
solutions of Maxwell's equations in a homogeneous
medium are composed of LCP and RCP plane--waves.

In the case of an achiral material, $\chi \equiv 0$, we
have that $\kappa^L = \kappa^R = k_0$ and both polarities
travel at the \textit{same} speed. However, for chiral materials,
$\chi \neq 0$, $\kappa^L \neq \kappa^R$ and the polarizations
travel at \textit{different} speeds; this is circular dichromism (CD).

%
%

\subsection{Transverse Solution: Beltrami Equations}
\label{Sec:Transverse}

While not immediately apparent, the Beltrami equations, 
\eqref{Eqn:Beltrami}, admit transverse solutions in particular
configurations. To illustrate this we consider the generic 
variable ($y$--invariant) coefficient Beltrami equation
\bes
\Curl{Q} = \frac{\sigmaz}{\sigma(x,z)} Q,
\ees
where the particular form will become clear presently,
and consider $y$--invariant solutions of the form
\bes
Q = Q(x,z) = \begin{pmatrix} q^x(x,z) \\ q(x,z) \\ q^z(x,z) \end{pmatrix}.
\ees
It is not difficult to see that the Beltrami equation demands
\begin{align*}
    & -\pz q(x,z) = \sigmaz \left( \frac{q^x(x,z))}{\sigma(x,z)} \right), \\
    & \pz q^x(x,z) -\px q^z(x,z) 
    = \sigmaz \left( \frac{q(x,z)}{\sigma(x,z)} \right), \\
    & \px q(x,z) = \sigmaz \left( \frac{q^z(x,z)}{\sigma(x,z)} \right).
\end{align*}
The first and third equations can be readily solved in terms of $q(x,z)$,
\bes
q^x(x,z) = -\frac{\sigma(x,z) \pz q(x,z)}{\sigmaz},
\quad
q^z(x,z) = \frac{\sigma(x,z) \px q(x,z)}{\sigmaz},
\ees
and insertion of these into the second equation yields
\bes
\pz \left[ -\frac{\sigma(x,z) \pz q(x,z)}{\sigmaz} \right]
- \px \left[ \frac{\sigma(x,z) \px q(x,z)}{\sigmaz} \right] 
= \sigmaz \frac{q(x,z)}{\sigma(x,z)},
\ees
or
\be
\label{Eqn:Beltrami:Scalar}
\sigma(x,z) \Div{ \sigma(x,z) \Grad{q(x,z)} } + \sigmaz^2 q(x,z) = 0.
\ee
Naturally, if $\sigma \equiv 1$ then we realize a standard
Helmholtz equation
\bes
\Laplacian{q(x,z)} + \sigmaz^2 q(x,z) = 0,
\ees
however, even if $\sigma = \sigma(x,z)$, \eqref{Eqn:Beltrami:Scalar} 
is a \textit{scalar} PDE of
elliptic type for which our techniques are applicable.

%
%

\subsection{Two--Dimensional Chirality}
\label{Sec:2DChirality}

With these developments, it is easy to see that if the chirality
is $y$--independent, $\chi = \chi(x,z)$, and $x$--periodic (with period
$d_x = d$), then the fields $\{ L, R \}$ can be decomposed as
\bes
L = \begin{pmatrix} \ell^x(x,z) \\ \ell(x,z) \\ \ell^z(x,z) \end{pmatrix},
\quad
R = \begin{pmatrix} r^x(x,z) \\ r(x,z) \\ r^z(x,z) \end{pmatrix},
\ees
with the \textit{transverse} components satisfying the following
elliptic PDEs. Using $\sigmaz = \kz$ and $\sigma(x,z) = 1 - \kz \chi(x,z)$
for the LCP field we find
\be
(1 - \kz \chi(x,z)) \Div{(1 - \kz \chi(x,z)) \Grad{\ell(x,z)}} 
  + \kz^2 \ell(x,z) = 0.
\ee
Using $\sigmaz = -\kz$ and $\sigma(x,z) = 1 + \kz \chi(x,z)$
for the RCP field we find
\be
(1 + \kz \chi(x,z)) \Div{(1 + \kz \chi(x,z)) \Grad{r(x,z)}} 
  + \kz^2 r(x,z) = 0.
\ee
In this case of transverse polarization we realize that there
are two \textit{decoupled} PDEs to solve. The first,
using the (inverse) Bohren transformation \eqref{Eqn:Bohren}
for the data at $\Gamma_h$, is for the LCP field,
\bse
\label{Eqn:LCP}
\begin{align}
& (1 - \kz \chiv(x,z)) \Div{(1 - \kz \chiv(x,z)) \Grad{\ell(x,z)}} 
  + \kz^2 \ell(x,z) = 0,
  && -h < z < h, \label{Eqn:LCP:a} \\
& -\pz \ell - T_u[ \ell ] = \frac{1}{2} ( \phi + i \etaz \psi ),
  && z = h,
  \label{Eqn:LCP:b} \\
& \pz \ell - T_w[ \ell ] = 0,
  && z = -h,
  \label{Eqn:LCP:c} \\
& \ell(x+d,z) = \exp(i \alpha d) \ell(x,z). 
  \label{Eqn:LCP:d}
\end{align}
\ese
The second, again via the (inverse) Bohren transformation \eqref{Eqn:Bohren}
for the condition at $\Gamma_h$, is for the RCP field,
\bse
\label{Eqn:RCP}
\begin{align}
& (1 + \kz \chiv(x,z)) \Div{(1 + \kz \chiv(x,z)) \Grad{r(x,z)}} 
  + \kz^2 r(x,z) = 0,
  && -h < z < h, \label{Eqn:RCP:a} \\
& -\pz r - T_u[ r ] = \frac{1}{2} \left( \frac{i \phi}{\etaz} + \psi \right), 
  && z = h,
  \label{Eqn:RCP:b} \\
& \pz r - T_w[ r ] = 0,
  && z = -h,
  \label{Eqn:RCP:c} \\
& r(x+d,z) = \exp(i \alpha d) r(x,z).
  \label{Eqn:RCP:d}
\end{align}
\ese
To aid future developments we give a unified statement of
\eqref{Eqn:LCP} and \eqref{Eqn:RCP} as
\bse
\label{Eqn:LCPRCP}
\begin{align}
& \rho(x,z) \Div{ \rho(x,z) \Grad{v(x,z)}} + \kz^2 v(x,z) = 0,
  && \text{in $S_v$}, \label{Eqn:LCPRCP:a} \\
& -\pz v - T_u[ v ] = \tau, 
  && \text{at $\Gamma_h$},
  \label{Eqn:LCPRCP:b} \\
& \pz v - T_w[ v ] = 0,
  && \text{at $\Gamma_{-h}$},
  \label{Eqn:LCPRCP:c} \\
& v(x+d,z) = \exp(i \alpha d) v(x,z).
  \label{Eqn:LCPRCP:d}
\end{align}
\ese
where
\bes
v(x,z) = \begin{cases} \ell(x,z), & \text{LCP}, \\ 
  r(x,z), & \text{RCP}, \end{cases}
\quad
\rho(x,z) = \begin{cases} 1 - \kz \chiv(x,z), & \text{LCP}, \\
  1 + \kz \chiv(x,z), & \text{RCP}, \end{cases}
\ees
and
\be
\label{Eqn:tau}
\tau = \begin{cases}
  \frac{1}{2} ( \phi + i \etaz \psi ), & \text{LCP}, \\
  \frac{1}{2} ( \frac{i \phi}{\etaz} + \psi ), & \text{RCP},
  \end{cases}
\ee
and
\bes
S_v := (0,d) \times (-h,h),
\quad
\Gamma_{\pm h} := (0,d) \times \{ z = \pm h \}.
\ees

%
%

\section{A High--Order Perturbation of Envelopes Method}
\label{Sec:HOPE}

Following the lead of our previous work on the HOPE approach
to the achiral Helmholtz and Maxwell equations
\cite{Nicholls19b,NichollsVo23,NichollsVo24},
we suppose that the chirality is an (initially) small 
perturbation of a base value, $\bchi$,
\bes
\chiv(x,z) = \bchi - \delta X(x,z),
\ees
which has a slightly different form than
\cite{Nicholls19b,NichollsVo23,NichollsVo24} in order to
accommodate the case that, perhaps, $\bchi=0$. This gives
\begin{gather*}
\rho(x,z) = \rho_0 + \delta \rho_1(x,z),
\\
\rho_0 = \begin{cases} 1 - \kz \bchi, & \text{LCP}, \\
  1 + \kz \bchi, & \text{RCP}, \end{cases}
\quad
\rho_1(x,z) = \begin{cases} \kz X(x,z), & \text{LCP}, \\
  -\kz X(x,z), & \text{RCP}, \end{cases}
\end{gather*}
and, as above, $-1 < \bchi \kz < 1$, c.f.\ \eqref{Eqn:ChiK}.
With this \eqref{Eqn:LCPRCP} becomes
\bse
\label{Eqn:LCPRCP:delta}
\begin{align}
& \rho_0 \Div{ \rho_0 \Grad{v(x,z)} } + \kz^2 v(x,z) = F(x,z), 
  && \text{in $S_v$}, \label{Eqn:LCPRCP:delta:a} \\
& -\pz v - T_u[ v ] = \tau, 
  && \text{at $\Gamma_h$},
  \label{Eqn:LCPRCP:delta:b} \\
& \pz v - T_w[ v ] = 0,
  && \text{at $\Gamma_{-h}$},
  \label{Eqn:LCPRCP:delta:c} \\
& v(x+d,z) = \exp(i \alpha d) v(x,z).
  \label{Eqn:LCPRCP:delta:d}
\end{align}
where
\begin{align}
F(x,z) & = -\delta \rho_0 \Div{ \rho_1(x,z) \Grad{v(x,z)} }
  - \delta \rho_1(x,z) \Div{ \rho_0 \Grad{v(x,z)} } \notag \\
& \quad 
  - \delta^2 \rho_1(x,z) \Div{ \rho_1(x,z) \Grad{v(x,z)} }.
\end{align}
\ese

We now expand
\be
\label{Eqn:v:Exp}
v(x,z;\delta) = \summ v_m(x,z) \delta^m,
\ee
and insert this into \eqref{Eqn:LCPRCP:delta} giving
the recursive problems for $v_m(x,z)$ as
\bse
\label{Eqn:LCPRCP:m}
\begin{align}
& \rho_0 \Div{ \rho_0 \Grad{v_m(x,z)}} + \kz^2 v_m(x,z) = F_m(x,z), 
  && \text{in $S_v$}, \label{Eqn:LCPRCP:m:a} \\
& -\pz v_m - T_u[ v_m ] = \delta_{m,0} \tau,
  && \text{at $\Gamma_h$},
  \label{Eqn:LCPRCP:m:b} \\
& \pz v_m - T_w[ v_m ] = 0,
  && \text{at $\Gamma_{-h}$},
  \label{Eqn:LCPRCP:m:c} \\
& v_m(x+d,z) = \exp(i \alpha d) v_m(x,z),
  \label{Eqn:LCPRCP:m:d}
\end{align}
where, $\delta_{m,n}$ is the Kronecker delta function, and
\begin{align}
F_m(x,z) & = -\rho_0 \Div{ \rho_1(x,z) \Grad{v_{m-1}(x,z)} }
  - \rho_1(x,z) \Div{ \rho_0 \Grad{v_{m-1}(x,z)} } \notag \\
& \quad 
  - \rho_1(x,z) \Div{ \rho_1(x,z) \Grad{v_{m-2}(x,z)} }.
  \label{Eqn:LCPRCP:m:e}
\end{align}
\ese

We note that we can solve each of these at order zero with plane 
wave solutions
\bes
v_0(x,z) = A \exp(i \alpha x - i \bar{\gamma} z),
\quad
\alpha^2 + \bar{\gamma}^2 = \frac{\kz^2}{\rho_0^2}.
\ees
A helpful point of view of our HOPE scheme is that we seek higher
order corrections to these solutions
\bes
v(x,z) = v_0(x,z) + \sum_{m=1}^{\infty} v_m(x,z) \delta^m.
\ees

To specify more clearly the nature of our algorithm we must clarify
the form of the envelope, $X(x,z)$. For the numerical experiments
of \S~\ref{Sec:NumRes} we follow the lead of our previous work
\cite{Nicholls19b} and choose one meant to emulate a solid layer
of chiral material. Consider the smooth bump function
\bes
\Phi_{a,b}(z) := \frac{\tanh(w(z-a))-\tanh(w(z-b))}{2},
\ees
with sharpness parameter $w$, which is essentially unity on $(a,b)$
while being effectively zero outside. We desire a uniform chiral layer of
thickness $2 t$ and chirality $\chi'$ in an otherwise achiral material
by choosing
\be
\label{Eqn:X:Smooth}
\bchi = 0,
\quad
X(x,z) = \chi' \Phi_{-t,t}(z),
\quad
\delta = -1.
\ee
In Figure~\ref{Fig:Chirality} we plot the resulting $\chiv(x,z)$
for $d = 0.8$, $h = 0.95$, $t = 0.5$, $\chi' = 0.01$, and $w = 100$.
%
%
\begin{figure}[hbt]
  \begin{center}
  \includegraphics[width=0.6\textwidth]{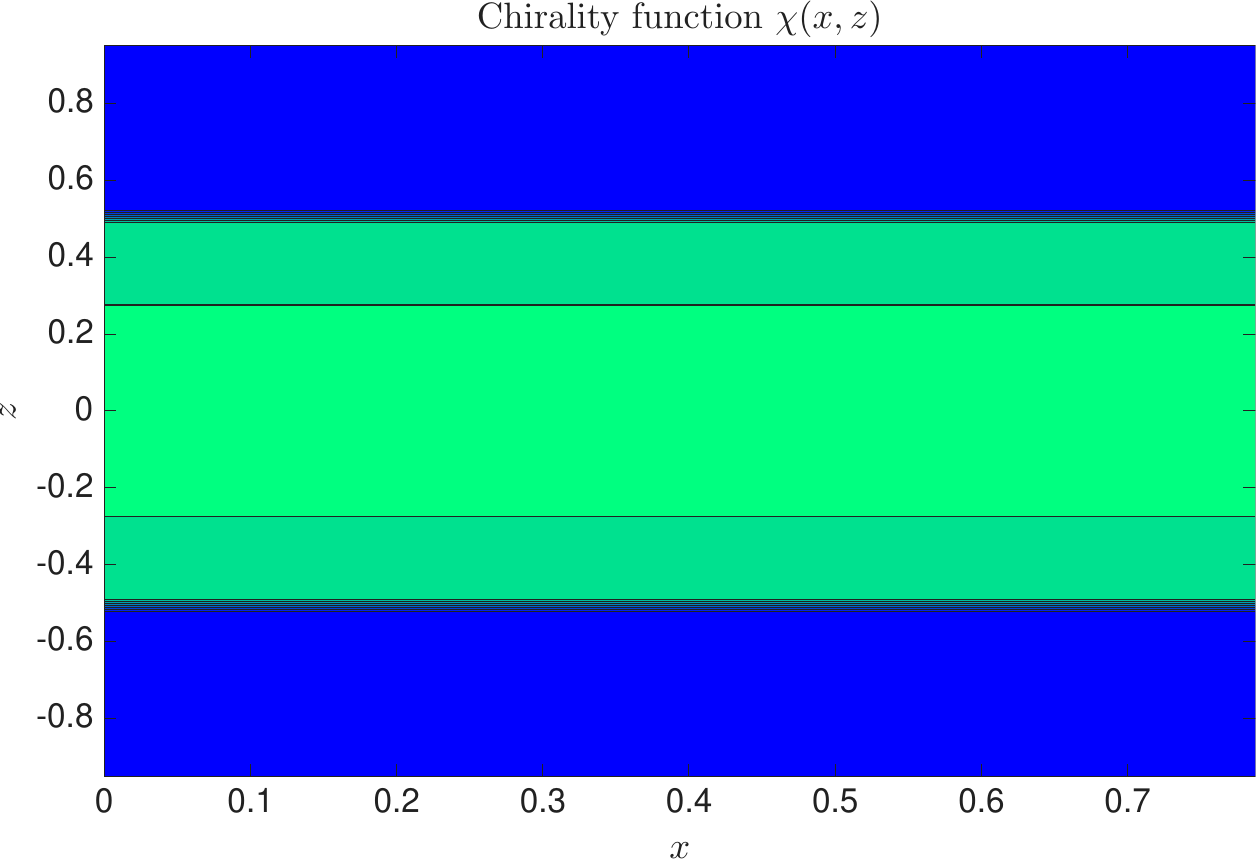}
  \caption{Plot of an example chirality function, 
  $\chiv(x,z)$,
  approximating a chiral slab of unit thickness.}
\label{Fig:Chirality}
\end{center}
\end{figure}
%
%
%

%
%

\section{Function Spaces}
\label{Sec:FcnSpaces}

We now recall function spaces and theoretical
results that are necessary for our analysis,
very much in the spirit of our previous work
\cite{Nicholls19b,NichollsVo23,NichollsVo24}. For any real number $s \geq 0$,
we have the interfacial quasiperiodic $L^2$ Sobolev norm
\bes
\SobNorm{v}{s}^2 := \sump \Angle{p}^{2 s} \Abs{\hat{v}_p}^2,
\ees
where
\bes
\Angle{p}^2 := 1 + \Abs{p}^2,
\quad
\hat{v}_p := \frac{1}{d} \int_0^{d} v(x) e^{-i \alpha_p x} \; dx.
\ees
From this we define the interfacial quasiperiodic Sobolev space
\cite{Kress14}
\bes
H^s(\Gamma) = \left\{ v(x) \in L^2(\Gamma)\ |\ 
  \SobNorm{v}{s} < \infty \right\},
\quad
\Gamma := (0,d).
\ees
In addition, we mention that the dual space of $H^s$, $H^{-s}$, can be
defined by the norm above with a negative index. We also recall
the space of $s$--times continuously differentiable 
functions with H\"older norm
\bes
\HolderNorm{g}{s} := \max_{0 \leq \ell + r \leq s} 
  \SupNorm{\partial_x^{\ell} \partial_z^r g(x,z)}.
\ees
Finally, we define the volumetric laterally quasiperiodic Sobolev 
space as
\bes
H^s(S_v) = \left\{ u(x,z) \in L^2(S_v)\ |\ 
  \SobNorm{u}{s} < \infty \right\},
\ees
where 
\bes
\SobNorm{u}{s}^2
  := \sum_{j=0}^{s} \sump \Angle{p}^{2 (s-j)} 
  \int_{-h}^h \Abs{\partial_z^j \hat{u}_p(z)}^2 \; dz.
\ees
	
We mention an important lemma \cite{Evans10,NichollsReitich99} 
required for our later proofs.
\begin{Lemma}
Let $s \geq 0$ be an integer and $D$ be either $\Gamma$ or $S_v$.
If $g \in C^s(D)$ and $w \in H^s(D)$ then $g w \in H^s(D)$ and
\bes
\SobNorm{g w}{s} \leq \tM(s,D) \HolderNorm{g}{s} \SobNorm{w}{s},
\ees
where $\tM$ is some positive constant.
\end{Lemma}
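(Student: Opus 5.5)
The plan is to reduce the Fourier-weighted norms to sums of $L^2$ norms of derivatives, apply the Leibniz rule in physical space, and then translate back. Throughout, $g$ is understood to be $d$-periodic, as $\rho_0$ and $\rho_1$ are, while $w$ is $\alpha$-quasiperiodic. Then $g w$ is again $\alpha$-quasiperiodic, so $\SobNorm{g w}{s}$ makes sense.

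\emph{Step 1: norm equivalence.} For $v$ quasiperiodic on $\Gamma$ we have $\partial_x^k v = \sum_p (i\alpha_p)^k \hat v_p e^{i\alpha_p x}$. By Parseval, $\Norm{\partial_x^k v}{L^2}^2 = d \sum_p \Abs{\alpha_p}^{2k}\Abs{\hat v_p}^2$. Since $\alpha_p = \alpha + (2\pi/d)p$, there are constants $c,C>0$, depending only on $\alpha$ and $d$, such that
\[
c\,\Angle{p}^2 \leq 1 + \Abs{\alpha_p}^2 \leq C\,\Angle{p}^2
\]
for all $p$. This holds because both sides are positive and their ratio tends to $(2\pi/d)^2$ as $\Abs{p}\to\infty$, so it suffices to check finitely many $p$, where the possible vanishing of some $\alpha_p$ is harmless. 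Expanding $(1+\Abs{\alpha_p}^2)^s$ binomially then gives
\[
\SobNorm{v}{s}^2 \simeq \sum_{k=0}^s \Norm{\partial_x^k v}{L^2(\Gamma)}^2 .
\]
On $S_v$ the same argument is applied to each $z$-slice $\partial_z^j \hat u_p(z)$ and then integrated in $z$. This yields
\[
\SobNorm{u}{s}^2 \simeq \sum_{j+k\leq s} \Norm{\partial_x^k\partial_z^j u}{L^2(S_v)}^2,
\]
with constants depending only on $s$, $\alpha$, and $d$.

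\emph{Step 2: Leibniz.} For $k+j\leq s$ we write
\[
\partial_x^k\partial_z^j (g w) = \sum \binom{k}{k'}\binom{j}{j'} \bigl(\partial_x^{k-k'}\partial_z^{j-j'} g\bigr)\bigl(\partial_x^{k'}\partial_z^{j'} w\bigr).
\]
Each factor involving $g$ is bounded in $L^\infty$ by $\HolderNorm{g}{s}$. Each factor involving $w$ has total order at most $s$, so its $L^2$ norm is controlled by $\SobNorm{w}{s}$ through Step 1. Summing the finitely many terms and applying the reverse equivalence from Step 1 gives the claim, with $\tM(s,D)$ depending on $s$, the binomial coefficients, and the equivalence constants. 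The interfacial case $D=\Gamma$ is the same argument with $j=0$. Justifying the Leibniz rule for $w\in H^s$ is routine, by density of smooth quasiperiodic functions (finite Fourier sums in $x$ with smooth coefficients in $z$).

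The only step needing care is Step 1. There are two points: the uniform comparability of $1+\Abs{\alpha_p}^2$ with $\Angle{p}^2$ when $\alpha_p$ may vanish, and the fact that the volumetric norm in the paper is weighted as $\Angle{p}^{2(s-j)}$ paired with $\partial_z^j$. Both are handled by the explicit comparison constants above. The Leibniz step is then purely mechanical.
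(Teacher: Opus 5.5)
Your proof is correct. The norm equivalence in Step 1 (including the uniform comparability of $1+|\alpha_p|^2$ with $\langle p\rangle^2$ and the $\langle p\rangle^{2(s-j)}$ weighting on $S_v$), followed by the Leibniz rule and density, is the standard argument; the paper does not write it out and simply cites Evans and Nicholls--Reitich. Your hypothesis that $g$ is $C^s$ as a $d$-periodic function is actually necessary for $s\geq 1$ with these Fourier-defined norms: $g(x)=x$ with $w=e^{i\alpha x}$ gives Fourier coefficients of $gw$ of size $1/|p|$, so $gw\notin H^1(\Gamma)$. You were right to state it explicitly.
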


For our results on joint analyticity we require a particular notion
of analytic function.
\begin{Def}
\label{Def:Comega}
Given any integer $\ell \geq 0$, the functions $f = f(x)$ and
$X = X(x,z)$ are members of the spaces $C^{\omega}_{\ell}(\Gamma)$
and $C^{\omega}_{\ell}(S_v)$, respectively, if they are real
analytic and satisfy the estimates
\bes
\HolderNorm{\frac{\px^r}{r!} f}{\ell} \leq C_f \frac{A^r}{(r+1)^2},
\quad
\HolderNorm{\frac{\px^r \pz^t}{(r+t)!} X}{\ell} \leq C_X 
  \frac{A^r}{(r+1)^2} \frac{D^t}{(t+1)^2},
\quad
\forall\ r, t \geq 0,
\ees
for some $C_f, C_X, A, D > 0$.
\end{Def}

Finally, we recall the following elementary result
\cite{NichollsReitich00b,Nicholls19b}.
\begin{Lemma}
\label{Lemma:S}
Let $s \geq 0$ be an integer, then there exists a constant $S>0$ such that
\bes
\sum_{j=0}^s \frac{(s+1)^2}{(s-j+1)^2(j+1)^2} < S,
\quad
\sum_{j=0}^s \sum_{r=0}^j \frac{(s+1)^2}{(s-j+1)^2 (j-r+1)^2 (r+1)^2} < S^2.
\ees
\end{Lemma}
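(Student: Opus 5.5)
Write $a_j := 1/(j+1)^2$, so that the first sum in Lemma~\ref{Lemma:S} is $(s+1)^2 \sum_{j=0}^s a_{s-j} a_j$. The plan is to prove the first inequality by splitting this sum at its midpoint, and then to deduce the second from the first by iteration. Throughout we use the elementary bound $\sum_{j\geq 0} a_j = \pi^2/6 < 2$.

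For the single sum, split the range of $j$ into the two halves $0 \leq j \leq s/2$ and $s/2 < j \leq s$. By the symmetry $j \mapsto s-j$, the second half is bounded by the first. On the first half we have $s-j \geq s/2$, so
\bes
s - j + 1 \geq \frac{s+2}{2} \geq \frac{s+1}{2},
\quad
\text{hence}
\quad
\frac{(s+1)^2}{(s-j+1)^2} \leq 4.
\ees
Therefore
\bes
\sum_{j=0}^s \frac{(s+1)^2}{(s-j+1)^2 (j+1)^2}
  \leq 2 \cdot 4 \sum_{j \geq 0} \frac{1}{(j+1)^2}
  = \frac{8 \pi^2}{6} < 14.
\ees
This gives the first claim with $S_1 = 14$, uniformly in $s$.

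For the double sum, write the summand as
\bes
\frac{(s+1)^2}{(s-j+1)^2 (j+1)^2} \cdot \frac{(j+1)^2}{(j-r+1)^2 (r+1)^2}.
\ees
Summing first over $r$ with $j$ fixed, the second factor is exactly the first sum with $s$ replaced by $j$, so it is bounded by $S_1$. What remains is the first sum again, also bounded by $S_1$. Hence the double sum is bounded by $S_1^2$.

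Taking $S := S_1 + 1 = 15$ makes both inequalities strict, so a single constant serves for both. The only step that needs care is the midpoint splitting, which is what makes the bound uniform in $s$; the rest is bookkeeping. I do not expect any step to be a genuine obstacle.
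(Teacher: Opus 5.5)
Your proof is correct and complete. The midpoint split, combined with the symmetry $j \mapsto s-j$, gives the first sum the uniform bound $4\pi^2/3 < 14$. Factoring the double-sum summand as the first sum in $j$ times the first sum in $r$ at level $j$ then bounds the double sum by the square of that constant.

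The paper gives no proof of this lemma; it only recalls it as an elementary result from \cite{NichollsReitich00b,Nicholls19b}. So there is no argument to compare yours against, and your proposal supplies the missing self-contained justification.

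Two minor remarks. First, the shift $S = S_1 + 1$ is unnecessary, since your bound $4\pi^2/3 < 14$ is already strict, and so is its square below $196$. Second, the identity $\frac{1}{s-j+1} + \frac{1}{j+1} = \frac{s+2}{(s-j+1)(j+1)}$ gives the first bound without splitting. It yields $\frac{(s+1)^2}{(s-j+1)^2(j+1)^2} \leq \frac{2}{(s-j+1)^2} + \frac{2}{(j+1)^2}$, and summing over $j$ gives the smaller constant $2\pi^2/3$.
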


%
%

\section{Analytic Continuation}
\label{Sec:AnalCont}

As we will demonstrate shortly, in either left or right circular polarization,
the series \eqref{Eqn:v:Exp} is strongly convergent in an appropriate function
space. In theory, this demands that $\delta \ll 1$, which suffices for small
deviations of the chirality from its background value, however, with 
a little more effort
we can demonstrate that the field $v(x,z;\delta)$ is, in fact, analytic
for \textit{any} \textit{real} value of the perturbation parameter $\delta$.
We follow our previous work \cite{Nicholls19b,NichollsVo24} (inspired by the
proofs which can be found in \cite{NichollsReitich00b,NichollsTaber06}) and
conclude from this result that numerical analytic continuation algorithms,
such as Pad\'e approximation \cite{BakerGravesMorris96}, are fully justified.

To accomplish this proof, we consider the envelope $X(x,z)$ and an 
\textit{arbitrary} \textit{real} $\tdelta \in \Real$, and show
that $v$ depends \textit{analytically} upon $\tdelta$. Considering
our current framework we set
\bes
X_0(x,z) := \tdelta_0 X(x,z),
\quad
\delta = \tdelta - \tdelta_0,
\ees
and seek analyticity about $\delta = 0$ for
\bes
\tdelta X(x,z) = (\tdelta_0 + \delta) X(x,z) = X_0(x,z) + \delta X(x,z),
\ees
see Figure~\ref{Fig:DomAnal}.
%
%
\begin{figure}[hbt]
  \begin{center}
  \includegraphics[width=0.6\textwidth]{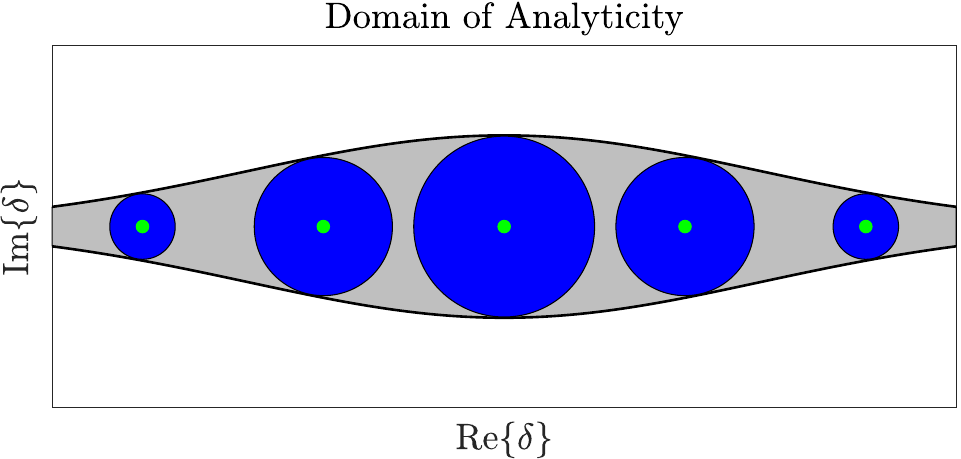}
  \caption{The domain of analyticity (gray) in the
  complex plane of the field, \eqref{Eqn:v:Exp},
  as a function of the perturbation parameter, $\delta$.
  We demonstrate existence of \textit{disks} (blue)
  of analyticity
  around arbitrary values $\tdelta_0 \in \Real$ (green) which 
  fills out the entire gray region.}
\label{Fig:DomAnal}
\end{center}
\end{figure}
With this, we have
\begin{gather*}
\rho(x,z) = \rho_0(x,z) + \delta \rho_1(x,z),
\\
\rho_0(x,z) = \begin{cases}
  1 - \kz \bchi + \kz X_0(x,z), \\
  1 + \kz \bchi - \kz X_0(x,z),
  \end{cases}
\quad
\rho_1(x,z) = \begin{cases}
  \kz X(x,z), & \text{LCP}, \\
  -\kz X(x,z), & \text{RCP},
  \end{cases}
\end{gather*}
so that \eqref{Eqn:LCPRCP:delta} can be updated to
\bse
\label{Eqn:LCPRCP:tdelta}
\begin{align}
& \rho_0(x,z) \Div{ \rho_0(x,z) \Grad{v(x,z)} } + \kz^2 v(x,z) = F(x,z), 
  && \text{in $S_v$}, \label{Eqn:LCPRCP:tdelta:a} \\
& -\pz v - T_u[ v ] = \tau, 
  && \text{at $\Gamma_h$},
  \label{Eqn:LCPRCP:tdelta:b} \\
& \pz v - T_w[ v ] = 0,
  && \text{at $\Gamma_{-h}$},
  \label{Eqn:LCPRCP:tdelta:c} \\
& v(x+d,z) = \exp(i \alpha d) v(x,z).
  \label{Eqn:LCPRCP:tdelta:d}
\end{align}
where
\begin{align}
F(x,z) & = -\delta \rho_0(x,z) \Div{ \rho_1(x,z) \Grad{v(x,z)} } \notag \\
& \quad
  - \delta \rho_1(x,z) \Div{ \rho_0(x,z) \Grad{v(x,z)} } \notag \\
& \quad 
  - \delta^2 \rho_1(x,z) \Div{ \rho_1(x,z) \Grad{v(x,z)} }.
\end{align}
\ese

As before, we expand
\be
\label{Eqn:v:tExp}
v(x,z;X_0,\delta) = \summ v_m(x,z;X_0) \delta^m,
\ee
and insert this into \eqref{Eqn:LCPRCP:tdelta} giving
the recursive problems for $v_m(x,z;X_0)$ as
\bse
\label{Eqn:LCPRCP:t:m}
\begin{align}
& \rho_0(x,z) \Div{ \rho_0(x,z) \Grad{v_m(x,z)}} + \kz^2 v_m(x,z) = F_m(x,z), 
  && \text{in $S_v$}, \label{Eqn:LCPRCP:t:m:a} \\
& -\pz v_m - T_u[ v_m ] = \delta_{m,0} \tau,
  && \text{at $\Gamma_h$},
  \label{Eqn:LCPRCP:t:m:b} \\
& \pz v_m - T_w[ v_m ] = 0,
  && \text{at $\Gamma_{-h}$},
  \label{Eqn:LCPRCP:t:m:c} \\
& v_m(x+d,z) = \exp(i \alpha d) v_m(x,z),
  \label{Eqn:LCPRCP:t:m:d}
\end{align}
where
\begin{align}
F_m(x,z) & = -\rho_0(x,z) \Div{ \rho_1(x,z) \Grad{v_{m-1}(x,z)} } \notag \\
& \quad
  - \rho_1(x,z) \Div{ \rho_0(x,z) \Grad{v_{m-1}(x,z)} } \notag \\
& \quad 
  - \rho_1(x,z) \Div{ \rho_1(x,z) \Grad{v_{m-2}(x,z)} }.
  \label{Eqn:LCPRCP:t:m:e}
\end{align}
\ese

As in \cite{Nicholls19b} we require an elliptic estimate for
our inductive proof which we report here. As \cite{BaoLi22} state, 
the issue of \textit{uniqueness} of solutions to the 
homogeneous Maxwell problem
\bse
\label{Eqn:Max:Uniqueness}
\begin{align}
& \rho_0(x,z) \Div{ \rho_0(x,z) \Grad{V} } + \kz^2 V = 0,
  && \text{in $S_v$}, \\
& -\pz V - T_u[V] = 0, && \text{at $\Gamma_h$}, \\
& \pz V - T_w[V] = 0, && \text{at $\Gamma_{-h}$}, \\
& V(x+d,z) = \exp(i \alpha d) V(x,z),
\end{align}
\ese
c.f.\ \eqref{Eqn:LCPRCP:t:m}, which should have only
the \textit{trivial} solution $V \equiv 0$, is a difficult one
and certain frequencies $\omega$ will induce non--uniqueness in 
some configurations. For better or worse, 
a precise characterization of the set of forbidden
frequencies is unknown and all that is clear is that it is countable
and accumulates at infinity \cite{BaoLi22}. To accommodate this
state of affairs we define the set of permissible configurations
\bes
\cP := \left\{ (\omega,\bchi,X_0)\ |\ 
V \equiv 0\ \text{is the unique solution of
	\eqref{Eqn:Max:Uniqueness}} \right\}.
\ees

With this, we can now state the following fundamental elliptic
regularity result.
\begin{Thm}
\label{Thm:EllEst}
Given any integer $s \geq 0$, if $(\omega,\bchi,X_0) \in \cP$,
$\rho_0 \in C^{s+1}(S_v)$,
$F \in H^{s}(S_v)$,
$Q \in H^{s+1/2}(\Gamma)$, and
$R \in H^{s+1/2}(\Gamma)$, 
then there exists a unique solution of
\bse
\label{Eqn:EllEst}
\begin{align}
& \rho_0(x,z) \Div{ \rho_0(x,z) \Grad{v}} + \kz^2 v = F,
  && \text{in $S_v$}, 
  \label{Eqn:EllEst:a} \\
& -\pz v - T_u[v] = Q && \text{at $\Gamma_h$}, \\
& \pz v - T_w[v] = R && \text{at $\Gamma_{-h}$}, \\
& v(x+d,z) = e^{i \alpha d} v(x,z),
\end{align}
\ese
satisfying
\be
\label{Eqn:EllEst:Est}
\SobNorm{v}{s+2} \leq C_e \left( \SobNorm{F}{s} 
  + \SobNorm{Q}{s+1/2} + \SobNorm{R}{s+1/2} \right),
\ee
where $C_e > 0$ is a constant.
\end{Thm}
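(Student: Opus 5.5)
We will prove Theorem~\ref{Thm:EllEst} by the standard variational (Fredholm) argument followed by a bootstrap for higher regularity, exactly in the spirit of \cite{Nicholls19b,BaoLi22}. The first step is to put the operator in divergence form. Since $\rho_0\in C^{s+1}(S_v)$ and, by \eqref{Eqn:ChiK}, $\rho_0$ is bounded away from zero (we assume $\rho_0\ge \rho_{\min}>0$, which is implicit in the admissible range of chirality), we divide \eqref{Eqn:EllEst:a} by $\rho_0$. This gives
\begin{equation*}
\Div{\rho_0 \Grad{v}} + \frac{\kz^2}{\rho_0} v = \frac{F}{\rho_0},
\end{equation*}
with $F/\rho_0\in H^s(S_v)$ by the product lemma. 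We then multiply by $\bar{\varphi}$ for a quasiperiodic test function $\varphi\in H^1(S_v)$ and integrate by parts; the lateral contributions cancel by quasiperiodicity. On $\Gamma_{\pm h}$ the boundary conditions replace $\pm\pz v$ by $-T_u[v]-Q$ and $T_w[v]+R$. Here we use $\rho_0=\rho_0(x,\pm h)$ and note that $\rho_0$ is constant on the top and bottom boundaries by the limit conditions, which equal $1$ since $\chi\to 0$ there.

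This yields the sesquilinear form
\begin{equation*}
a(v,\varphi)=\int_{S_v}\rho_0\Grad{v}\cdot\Grad{\bar\varphi}-\frac{\kz^2}{\rho_0}v\bar\varphi\,dx\,dz+\int_{\Gamma_h}T_u[v]\bar\varphi\,dx+\int_{\Gamma_{-h}}T_w[v]\bar\varphi\,dx.
\end{equation*}
The right-hand side is a bounded functional on $H^1$ involving $F$, $Q$ and $R$. The operators $-T_u$ and $-T_w$ have symbols $i\gamma_p$. For all but finitely many $p$ this equals $-\sqrt{\alpha_p^2-\kz^2}\le 0$, so $\int T_{u}[v]\bar v$ has nonnegative real part up to finitely many modes, and its propagating modes contribute purely imaginary terms. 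Hence $a=a_0+K$, where $a_0$ is coercive on $H^1$ (Gårding) and $K$ is compact: it consists of the $L^2$ term plus finitely many Fourier modes, and uses the compact embedding $H^1\subset L^2$. By the Fredholm alternative, uniqueness, which is precisely the hypothesis $(\omega,\bchi,X_0)\in\cP$, gives existence and the bound $\|v\|_{H^1}\le C(\|F\|_{L^2}+\|Q\|_{H^{-1/2}}+\|R\|_{H^{-1/2}})$.

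The second step is regularity up to $H^{s+2}$. Tangential ($x$) regularity comes from difference quotients, or equivalently multiplying Fourier coefficients by $\langle p\rangle^{2}$, in the periodic direction. Commutators with $\rho_0$ are controlled by $\HolderNorm{\rho_0}{s+1}$ via the product lemma. The boundary terms are handled because $T_u$ and $T_w$ are Fourier multipliers commuting with $\px$ and of order one, so $\int T[\px v]\overline{\px v}$ again has the good sign modulo finitely many modes. Normal derivatives are then recovered from the equation itself, $\pz(\rho_0\pz v)=F/\rho_0-\px(\rho_0\px v)-\kz^2 v/\rho_0$, and this is iterated in $j$ to match the norm $\SobNorm{\cdot}{s}$ defined in \S\ref{Sec:FcnSpaces}. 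An induction on $s$ produces \eqref{Eqn:EllEst:Est} with the data norms $H^{s+1/2}$ on the boundary; these enter through the trace duality $H^{s+1/2}\times H^{s-1/2}$ at each stage.

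The main obstacle is that the $H^1$ constant must not involve a lower-order $\|v\|_{L^2}$ term. Removing it is the standard compactness-contradiction argument: if the estimate failed, a normalized sequence $v_n$ would converge weakly to a nontrivial solution of \eqref{Eqn:Max:Uniqueness}, contradicting membership in $\cP$. A secondary technical point is checking that the finitely many propagating modes of $T_u$ and $T_w$ are genuinely compact perturbations. This holds because they have finite rank.
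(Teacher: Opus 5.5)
Your proposal is correct and follows the route the paper has in mind: the paper's proof simply defers to Theorem~6.2 of \cite{Nicholls19b} and the methods of \cite{BaoLi22}. Those methods are what you carry out: a variational formulation with the transparent-boundary terms (nonnegative real part on evanescent modes, finite rank on propagating ones), G\aa rding's inequality with the Fredholm alternative and uniqueness from $(\omega,\bchi,X_0)\in\cP$, then tangential difference quotients and recovery of $\pz$-derivatives from the equation. The hypotheses you make explicit, $\rho_0$ bounded away from zero and $\rho_0\equiv 1$ on $\Gamma_{\pm h}$, are tacit in the paper too (the first for ellipticity, the second from its limit conditions). A nonconstant positive trace of $\rho_0$ on $\Gamma_{\pm h}$ would only add a lower-order commutator term to your G\aa rding step.
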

\begin{proof}
This result is very much in the spirit of Theorem~6.2 in
\cite{Nicholls19b}, both of which can be established using
the methods found in \cite{BaoLi22}.
\end{proof}

We can now establish the recursive estimate required by our theory.
\begin{Lemma}
\label{Lemma:Recur}
Given any integer $s \geq 0$, if $\rho_0(x,z), \rho_1(x,z) \in C^{s+1}(S_v)$ and
\bes
\SobNorm{v_m}{s+2} \leq K B^m,
\quad
m < M,
\ees
then, for the functions $F_m$ in \eqref{Eqn:LCPRCP:t:m:e},
\bes
\SobNorm{F_M}{s} \leq K \tM^2
  \left\{ 2 \HolderNorm{\rho_0}{s+1} \HolderNorm{\rho_1}{s+1} B^{M-1} 
  + \HolderNorm{\rho_1}{s+1}^2 B^{M-2} \right\}.
\ees
\end{Lemma}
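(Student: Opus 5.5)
We bound each of the three terms of $F_M$ in \eqref{Eqn:LCPRCP:t:m:e} separately in $H^s(S_v)$ and then apply the triangle inequality. The tools are the product estimate of the first Lemma in \S~\ref{Sec:FcnSpaces}, $\SobNorm{g w}{s} \leq \tM \HolderNorm{g}{s} \SobNorm{w}{s}$ with $D = S_v$, and the inductive hypothesis $\SobNorm{v_m}{s+2} \leq K B^m$ for $m < M$. Only $v_{M-1}$ and $v_{M-2}$ appear in $F_M$, so the hypothesis covers every term. When $M=1$ the last term is absent, or $v_{-1} \equiv 0$, so the claimed bound still holds.

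\emph{First term.} Consider $\rho_0 \Div{\rho_1 \Grad{v_{M-1}}}$. One application of the product lemma with $g = \rho_0 \in C^s$ gives
\[
\SobNorm{\rho_0 \Div{\rho_1 \Grad{v_{M-1}}}}{s} \leq \tM \HolderNorm{\rho_0}{s} \SobNorm{\Div{\rho_1 \Grad{v_{M-1}}}}{s}.
\]
The divergence costs one derivative, so
\[
\SobNorm{\Div{\rho_1 \Grad{v_{M-1}}}}{s} \leq \SobNorm{\rho_1 \Grad{v_{M-1}}}{s+1}.
\]
This holds for the volumetric norm of \S~\ref{Sec:FcnSpaces}, since $\px$ multiplies Fourier coefficients by $i\alpha_p$, which is bounded by a constant times $\Angle{p}$, and $\pz$ shifts the index $j$. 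We then apply the product lemma at level $s+1$ with $g = \rho_1 \in C^{s+1}$, which is exactly why the hypothesis asks for $C^{s+1}$ regularity:
\[
\SobNorm{\rho_1 \Grad{v_{M-1}}}{s+1} \leq \tM \HolderNorm{\rho_1}{s+1} \SobNorm{v_{M-1}}{s+2} \leq \tM \HolderNorm{\rho_1}{s+1} K B^{M-1}.
\]
Combining these, and using $\HolderNorm{\rho_0}{s} \leq \HolderNorm{\rho_0}{s+1}$, the first term is bounded by $K \tM^2 \HolderNorm{\rho_0}{s+1}\HolderNorm{\rho_1}{s+1} B^{M-1}$.

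\emph{Second and third terms.} The second term, $\rho_1 \Div{\rho_0 \Grad{v_{M-1}}}$, is handled the same way with the roles of $\rho_0$ and $\rho_1$ swapped, and gives the identical bound; together with the first term this produces the factor $2$. The third term, $\rho_1\Div{\rho_1 \Grad{v_{M-2}}}$, gives $K \tM^2 \HolderNorm{\rho_1}{s+1}^2 B^{M-2}$ by the same argument.

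\emph{Constants.} The main obstacle is bookkeeping rather than substance. The product constant $\tM$ depends on $s$, and the lemma is used at both levels $s$ and $s+1$; we take $\tM$ to be the maximum of $\tM(s,S_v)$ and $\tM(s+1,S_v)$. Any harmless constants from the derivative bounds (for instance $\max_p \Abs{\alpha_p}/\Angle{p}$) are likewise absorbed into $\tM$, so that the estimate takes exactly the stated form.
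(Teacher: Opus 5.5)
Your proposal is correct and follows essentially the same route as the paper's proof. You use the product lemma at level $s$ to remove the outer coefficient, absorb the one-derivative loss from the divergence, apply the product lemma at level $s+1$ with the inner coefficient in $C^{s+1}$, and then use the inductive hypothesis on $v_{M-1}$ and $v_{M-2}$, with the two symmetric terms giving the factor $2$. Your handling of constants is slightly more careful than the paper's, which silently uses a single $\tM$ and takes the derivative bounds with constant one: you take $\tM$ as the maximum over levels $s$ and $s+1$ and absorb the $\Abs{\alpha_p}/\Angle{p}$ factor into it.
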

\begin{proof}
We begin our estimations with
\begin{align*}
\SobNorm{F_M}{s}
  & \leq \tM \HolderNorm{\rho_0}{s} \SobNorm{ \rho_1 \Grad{v_{M-1}} }{s+1}
  + \tM \HolderNorm{\rho_1}{s} \SobNorm{ \rho_0 \Grad{v_{M-1}} }{s+1} \\
  & \quad 
  + \tM \HolderNorm{\rho_1}{s} \SobNorm{ \rho_1 \Grad{v_{M-2}} }{s+1}.
\end{align*}
Continuing
\begin{align*}
\SobNorm{F_M}{s}
  & \leq \tM \HolderNorm{\rho_0}{s} \tM \HolderNorm{\rho_1}{s+1} \SobNorm{v_{M-1}}{s+2} 
  + \tM \HolderNorm{\rho_1}{s} \tM \HolderNorm{\rho_0}{s+1} \SobNorm{v_{M-1}}{s+2} \\
  & \quad 
  + \tM \HolderNorm{\rho_1}{s} \tM \HolderNorm{\rho_1}{s+1} \SobNorm{v_{M-2}}{s+2}.
\end{align*}
Finally, the inductive hypothesis yields
\bes
\SobNorm{F_M}{s}
  \leq 2 \tM^2 \HolderNorm{\rho_0}{s+1} \HolderNorm{\rho_1}{s+1} K B^{M-1}
  + \tM^2 \HolderNorm{\rho_1}{s+1}^2 K B^{M-2},
\ees
and we are done.
\end{proof}
	
We can now prove our analytic continuation result.
\begin{Thm}
\label{Thm:AnalCont}
Given any integer $s \geq 0$, if $(\omega,\bchi,X_0) \in \cP$,
and $\rho_0(x,z), \rho_1(x,z) \in C^{s+1}(S_v)$ then
the series \eqref{Eqn:v:tExp} converges strongly.
More precisely,
\be
\label{Eqn:AnalCont}
\SobNorm{v_m}{s+2} \leq K B^m,
\quad
\forall\ m \geq 0,
\ee
for some constants $K, B > 0$.
\end{Thm}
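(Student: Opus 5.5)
We argue by strong induction on $m$, combining the elliptic estimate of Theorem~\ref{Thm:EllEst} with the recursive bound of Lemma~\ref{Lemma:Recur}.

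\emph{Base case.} At order $m=0$ the forcing vanishes, $F_0 \equiv 0$, since $v_{-1} = v_{-2} = 0$. The data are $Q = \tau$ and $R = 0$. By \eqref{Eqn:phi:Def}, \eqref{Eqn:psi:Def} and \eqref{Eqn:tau}, $\tau$ is a constant multiple of $\exp(i \alpha x)$, so it lies in every $H^{s+1/2}(\Gamma)$. Since $(\omega,\bchi,X_0) \in \cP$ and $\rho_0 \in C^{s+1}(S_v)$, Theorem~\ref{Thm:EllEst} gives a unique $v_0$ with
\bes
\SobNorm{v_0}{s+2} \leq C_e \SobNorm{\tau}{s+1/2} =: K.
\ees
We fix this $K$, enlarging it if necessary so that $K>0$.

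\emph{Inductive step.} Suppose $\SobNorm{v_m}{s+2} \leq K B^m$ for all $m < M$, where $M \geq 1$ and $B$ is yet to be chosen. For $m=M$ the boundary data in \eqref{Eqn:LCPRCP:t:m} vanish. Theorem~\ref{Thm:EllEst} therefore gives $\SobNorm{v_M}{s+2} \leq C_e \SobNorm{F_M}{s}$. For $M=1$, the $v_{M-2}$ term is absent, and Lemma~\ref{Lemma:Recur} applies with that term simply dropped. Combining the two results,
\bes
\SobNorm{v_M}{s+2} \leq K C_e \tM^2 \left\{ 2 \HolderNorm{\rho_0}{s+1} \HolderNorm{\rho_1}{s+1} B^{M-1} + \HolderNorm{\rho_1}{s+1}^2 B^{M-2} \right\}.
\ees
To close the induction we need the right-hand side to be at most $K B^M$. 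It suffices that
\bes
2 C_e \tM^2 \HolderNorm{\rho_0}{s+1} \HolderNorm{\rho_1}{s+1} \leq B/2,
\qquad
C_e \tM^2 \HolderNorm{\rho_1}{s+1}^2 \leq B^2/2.
\ees
So we choose
\bes
B \geq \max\left\{ 4 C_e \tM^2 \HolderNorm{\rho_0}{s+1} \HolderNorm{\rho_1}{s+1},\ \sqrt{2 C_e \tM^2}\, \HolderNorm{\rho_1}{s+1} \right\}.
\ees
This choice is independent of $M$, which completes the induction. The estimate \eqref{Eqn:AnalCont} then implies that the series \eqref{Eqn:v:tExp} converges in $H^{s+2}(S_v)$ for $|\delta| < 1/B$.

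\emph{Main obstacle.} The delicate point is the regularity bookkeeping in Lemma~\ref{Lemma:Recur}. The forcing $F_M$ contains divergences of $\rho_1 \Grad{v_{M-1}}$, which cost one derivative. Theorem~\ref{Thm:EllEst} gives back exactly two derivatives, from $H^s$ to $H^{s+2}$. This is why the hypothesis $\rho_0, \rho_1 \in C^{s+1}$ is needed, and we rely on the lemma as stated for that accounting. Two further points matter for interpreting the result as analytic continuation:
\begin{itemize}
\item the constants $C_e$, $\tM$, $\HolderNorm{\rho_0}{s+1}$ and $\HolderNorm{\rho_1}{s+1}$ are finite for each fixed real $\tdelta_0$;
\item the non-resonance condition $(\omega,\bchi,X_0)\in\cP$ must hold at that $\tdelta_0$.
\end{itemize}
The resulting radius $1/B$ may depend on $\tdelta_0$. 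This dependence is harmless, since we only need a disk of analyticity about each real point, which is what Figure~\ref{Fig:DomAnal} depicts.
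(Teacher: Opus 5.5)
Your proposal is correct and follows the paper's proof essentially step for step. The paper also uses induction, with Theorem~\ref{Thm:EllEst} for the base case, and combines Lemma~\ref{Lemma:Recur} with the elliptic estimate for the inductive step, arriving at the same choice $B \geq \max\{4 C_e \tM^2 \HolderNorm{\rho_0}{s+1}, \sqrt{2 C_e}\,\tM\}\HolderNorm{\rho_1}{s+1}$. The only difference is cosmetic: the paper sets $K := \SobNorm{v_0}{s+2}$ rather than $C_e \SobNorm{\tau}{s+1/2}$.
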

\begin{proof}
We prove the estimate \eqref{Eqn:AnalCont} by induction. From the definitions
of $\phi$ and $\psi$ it is clear that $\tau \in H^{s+1/2}$ for any $s \geq 0$.
For $m = 0$
we invoke Theorem~\ref{Thm:EllEst} with $F \equiv 0$, $Q = \tau$, and
$R \equiv 0$ and set $K := \SobNorm{v_0}{s+2}$. We now assume that
\eqref{Eqn:AnalCont} is true for all $m < M$ and use Theorem~\ref{Thm:EllEst}
with $Q \equiv R \equiv 0$ to deduce that
\bes
\SobNorm{v_M}{s+2} \leq C_e \SobNorm{F_M}{s}.
\ees
From Lemma~\ref{Lemma:Recur} we have that
\bes
\SobNorm{v_M}{s+2} \leq C_e K \tM^2
  \left\{ 2 \HolderNorm{\rho_0}{s+1} \HolderNorm{\rho_1}{s+1} B^{M-1} 
  + \HolderNorm{\rho_1}{s+1}^2 B^{M-2} \right\},
\ees
and we are done provided that
\bes
B \geq \max \left\{ 4 C_e \tM^2 \HolderNorm{\rho_0}{s+1},
  \sqrt{2 C_e} \tM \right\} \HolderNorm{\rho_1}{s+1}.
\ees
\end{proof}

\begin{Rk}
We point out that by setting $X_0 \equiv 0$ we obtain the classical
analyticity result about $\tdelta=0$.
\end{Rk}

\begin{Rk}
We observe that the choice of $B$ determines the radius of convergence
of the series \eqref{Eqn:v:tExp}, for instance, if we choose 
$B \delta \leq 1/2$ (implying $\delta \leq 1/(2 B)$) then we can guarantee,
from \eqref{Eqn:v:tExp}, that $\SobNorm{v}{s+2} < 2 K$. The dependence of
the crucial constant $B$ upon $C_e$ and $\HolderNorm{\rho_0}{s+1}$, recalling
\bes
\rho_0(x,z) = \begin{cases}
  1 - \kz \bchi + \kz X_0(x,z), & \text{LCP}, \\
  1 + \kz \bchi - \kz X_0(x,z), & \text{RCP},
  \end{cases}
\ees
is somewhat complicated, however, the role of $\tM \HolderNorm{\rho_1}{s+1}$,
remembering
\bes
\rho_1(x,z) = \begin{cases}
  \kz X(x,z), & \text{LCP}, \\
  -\kz X(x,z), & \text{RCP},
  \end{cases}
\ees
is clear: Linear dependence upon $\tM$ and 
$\HolderNorm{\rho_1}{s+1} = \kz \HolderNorm{X(x,z)}{s+1}$.
\end{Rk}

%
%

\section{Joint Analyticity}
\label{Sec:Joint}

We conclude our theory with a joint analyticity result in the spirit
of that found in \cite{Nicholls19b,NichollsVo23} which
demonstrates the \textit{joint} analyticity of $v = v(x,z;\delta)$
with respect to $x$, $z$, and $\delta$. We accomplish this by showing
that the $v_m(x,z)$ from \eqref{Eqn:v:tExp} satisfy
Definition~\ref{Def:Comega} which is
achieved by repeatedly differentiating the problem
\eqref{Eqn:LCPRCP:t:m}
with respect to $x$ and $z$.

This requires the generalization of our elliptic estimate,
Theorem~\ref{Thm:EllEst}, to the following result.
\begin{Thm}
\label{Thm:EllEst:Joint}
Given any integer $\ell \geq 1$, if $(\omega,\bchi,X_0) \in \cP$,
$\rho_0 \in C^{\omega}_{\ell}(S_v)$, so that
\bes
\HolderNorm{\frac{\px^r \pz^t}{(r+t)!} \rho_0}{\ell}
  \leq C_{\rho} \frac{A^r}{(r+1)^2} \frac{D^t}{(t+1)^2},
  \quad
  \forall\ r, t \geq 0,
\ees
and some $C_{\rho}, A, D > 0$, and $F \in C^{\omega}(S_v)$
satisfying
\bes
\SobNorm{\frac{\px^r \pz^t}{(r+t)!} F}{0}
  \leq C_F \frac{A^r}{(r+1)^2} \frac{D^t}{(t+1)^2},
\quad
\forall\ r, t \geq 0,
\ees
and some $C_F > 0$, and $Q \in C^{\omega}(\Gamma_h)$,
$R \in C^{\omega}(\Gamma_{-h})$ satisfying
\bes
\SobNorm{\frac{\px^r}{r!} Q}{1/2} \leq C_Q \frac{A^r}{(r+1)^2},
\quad
\SobNorm{\frac{\px^r}{r!} R}{1/2} \leq C_R \frac{A^r}{(r+1)^2},
\quad
\forall\ r \geq 0,
\ees
for some $C_Q, C_R > 0$, 
then there exists a unique solution $V \in C^{\omega}(S_v)$ of
\bse
\label{Eqn:EllEst:Joint}
\begin{align}
& \rho_0(x,z) \Div{ \rho_0(x,z) \Grad{V}} + \kz^2 V = F,
  && \text{in $S_v$}, 
  \label{Eqn:EllEst:Joint:a} \\
& -\pz V - T_u[V] = Q && \text{at $\Gamma_h$}, \\
& \pz V - T_w[V] = R && \text{at $\Gamma_{-h}$}, \\
& V(x+d,z) = e^{i \alpha d} V(x,z),
\end{align}
\ese
satisfying
\be
\label{Eqn:EllEst:Est:Joint}
\SobNorm{\frac{\px^r \pz^t}{(r+t)!} V}{2}
  \leq \underline{C}_e \frac{A^r}{(r+1)^2} \frac{D^t}{(t+1)^2},
\quad
\forall\ r, t \geq 0,
\ee
where
\bes
\underline{C}_e := \kappa(h) (C_F + C_Q + C_R),
\ees
for some constant $\kappa(h) > 0$.
\end{Thm}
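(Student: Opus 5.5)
Existence and uniqueness of $V$ come directly from Theorem~\ref{Thm:EllEst} with $s=0$: $\rho_0 \in C^{\omega}_{\ell}(S_v) \subset C^1(S_v)$, and the hypotheses with $r=t=0$ give $F \in H^0$ and $Q,R \in H^{1/2}$. This yields
\bes
\SobNorm{V}{2} \leq C_e (C_F + C_Q + C_R).
\ees
The work is the analytic estimate \eqref{Eqn:EllEst:Est:Joint}, which must hold with the \emph{same} $A$ and $D$ as in the hypotheses. I will prove it by a double induction on $(r,t)$, first in $r$ with $t=0$ and then in $t$. The inductive claim is that $V_{r,t} := \px^r \pz^t V/(r+t)!$ satisfies $\SobNorm{V_{r,t}}{2} \leq \underline{C}_e A^r D^t/((r+1)^2(t+1)^2)$, where $\kappa(h)$ will be fixed at the end. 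That $\kappa$ can be chosen independently of $(r,t)$ is what keeps $A$ and $D$ unchanged.

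\textbf{Tangential derivatives.} Since $\px$ commutes with $T_u$, $T_w$ and with quasiperiodicity, applying $\px^r/r!$ to \eqref{Eqn:EllEst:Joint} and using Leibniz gives a problem of the same form for $V_{r,0}$. Its data are $Q_r = \px^r Q/r!$, $R_r = \px^r R/r!$, and the volume source is $F_r$ minus commutator terms
\bes
\sum_{j<r} \frac{j!\,(r-j)!}{r!}\binom{r}{j}\bigl[\ldots\bigr] = \sum_{j<r} (\text{products of } \px^{r-j}\rho_0/(r-j)! \text{ and derivatives of } V_{j,0}).
\ees
The factorials normalize exactly because $\binom{r}{j}\,j!\,(r-j)!/r! = 1$. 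Theorem~\ref{Thm:EllEst} with $s=0$ bounds $\SobNorm{V_{r,0}}{2}$ by $C_e$ times the $H^0$-norms of these commutators, plus the data. The commutators involve at most one derivative of $V_{j,0}$ multiplied by $\rho_0$ and its derivatives (after writing $\rho_0\Div{\rho_0\Grad V}$ as $\rho_0^2\Delta V + \rho_0\Grad\rho_0\cdot\Grad V$). This is where $\ell\geq 1$ enters: $\HolderNorm{\cdot}{1}$ of the normalized derivatives of $\rho_0$ controls the extra derivative.

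The key summation is
\bes
\sum_{j<r} \frac{A^{r-j}}{(r-j+1)^2}\frac{A^j}{(j+1)^2} \leq \frac{S A^r}{(r+1)^2},
\ees
by Lemma~\ref{Lemma:S}. It produces a contribution $C_e \tM C_\rho S\,\underline{C}_e A^r/(r+1)^2$, which must be absorbed. Here lies the main obstacle: the bound cannot be closed naively unless $C_e\tM C_\rho S<1$, which is not assumed.

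My plan is to exploit the fact that $j<r$ strictly, so every term in the sum carries at least one factor $A^{r-j}$ with $r-j\geq 1$. Concretely, I would split $\rho_0 = \rho_0(\text{mean}) + (\rho_0-\text{mean})$, or else use that the hypothesis constant for $r-j\geq1$ can be made small relative to $A$. If neither works, I would follow \cite{Nicholls19b,NichollsReitich00b} exactly. There, the estimate is proved by showing that the recursion closes for the stated $A,D$ once $\kappa(h)$ absorbs $C_e$ and the constants, with the lower-order terms treated through the weaker $H^1$ norm and an interpolation/Gårding step on the finite strip of height $2h$. This is where the $h$-dependence of $\kappa$ arises.

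\textbf{Normal derivatives.} For $t\geq1$ the boundary conditions do not commute with $\pz$, so I will not re-solve the boundary problem. Instead I use the equation itself to express $\pz^2 V$:
\bes
\pz^2 V = \rho_0^{-2}\bigl(F - \kz^2 V\bigr) - \px^2 V - \rho_0^{-1}\Grad\rho_0\cdot\Grad V.
\ees
Applying $\px^r\pz^{t-2}/(r+t)!$ expresses $V_{r,t}$ through $V_{r+2,t-2}$, $V_{r+1,t-1}$, and lower-order terms already controlled by induction on $t$. The $H^2$ norm of $V_{r,t}$ then needs $V_{r+2,t-2}$ and $F$ in $H^2$, which I will obtain from the tangential step with $s$ raised (or from the $H^2$ bound plus the equation). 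The factors $A^2$ from the two extra $x$-derivatives against $D^2$ are converted using the relation between $A$ and $D$ built into the hypotheses on $\rho_0$ and $F$. Analyticity of $\rho_0^{-1}$ follows from $\rho_0$ being bounded away from zero (since $-1<\chi\kz<1$), via a Cauchy-product argument with Lemma~\ref{Lemma:S}.
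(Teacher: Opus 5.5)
\textbf{There is a genuine gap.} Existence and uniqueness via Theorem~\ref{Thm:EllEst} with $s=0$ is fine, but the analytic estimate is not established. The absorption step you single out is left open, and none of your remedies closes it for the statement as written. The commutator terms contribute about $C_e\tM C_\rho S\,\underline{C}_e A^r/(r+1)^2$. Since $\underline{C}_e=\kappa(h)(C_F+C_Q+C_R)$ appears on both sides, no choice of $\kappa(h)$ absorbs this term; you would need $C_e\tM C_\rho S<1$. So your summary of \cite{Nicholls19b,NichollsReitich00b} (``the recursion closes once $\kappa(h)$ absorbs $C_e$ and the constants'') cannot be what happens. Falling back on the references, which is also all the paper does by pointing to Theorem~7.2 of \cite{Nicholls19b}, supplies no argument for this step.

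Your two remedies do not rescue it. Subtracting the mean of $\rho_0$ changes nothing, because the commutators only contain $\px^{r-j}\rho_0$ with $r-j\ge1$. The gain from $r-j\ge1$ is real, but only when $\rho_0$ is analytic with a constant $A_0$ smaller than the constant $A$ used for $F,Q,R,V$. Each commutator then carries $(A_0/A)^{r-j}\le A_0/A$, and the induction closes once $A/A_0$ exceeds a fixed multiple of $C_e\tM C_\rho S$.

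With one common $A$, no argument can work. Take $\rho_0=1+a\cos(2\pi x/d)$, $F=R=0$ and plane-wave $Q$; the hypotheses then hold for every $A>0$, so the conclusion would make $V$ entire in $x$. Now expand $V=\sum_n v_n(z)(x-x_*)^n$ at a complex zero $x_*$ of $\rho_0$. The equation gives $(\rho_0'(x_*)^2n^2+\kz^2)v_n=$ (terms in $v_j,v_j''$ with $j<n$). So for generic $a$ all $v_n$ vanish and $V\equiv0$, contradicting $Q\neq0$. The same example shows that $\rho_0^{-1}$, which your normal step needs, cannot be bounded with the same $A$ either: $\rho_0$ is entire, while $\rho_0^{-1}$ has poles.

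The normal step has a second gap: the hypotheses contain no relation between $A$ and $D$. Trading $V_{r,t}$ for $V_{r+2,t-2}$ costs a factor $(A/D)^2(r+1)^2(t+1)^2/((r+3)^2(t-1)^2)$, up to $9(A/D)^2$ at $t=2$. The $F$-terms lose powers of $A/D$ in the same way. So does the case $t=1$, which the equation does not reach and must come from the $s=1$ tangential estimate, where $\SobNorm{\px^r F}{1}$ brings in $A^{r+1}$. You therefore need $D\ge cA$, and this is not an artifact either. For $\rho_0\equiv1$ and $F=R=0$, the hypotheses hold for every $D>0$. Yet each evanescent mode is $\hat V_p(z)=-\hat Q_p e^{\beta_p(z-h)}/(2\beta_p)$ with $\beta_p=\sqrt{\alpha_p^2-\kz^2}$. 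Hence $\pz^t$ costs $\beta_p^t$, and the normal analyticity constant of $V$ is as large as the tangential one of $Q$ (take, e.g., $\hat Q_p=e^{-\sigma\Abs{p}}$).

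What your scheme does prove is a modified statement. Let $\rho_0$ be analytic with constants $(A_0,D_0)$, and measure $F,Q,R,V$ with $A\ge\Lambda A_0$ and $D\ge\max\{D_0,cA\}$. Here $\Lambda$, $c$ and $\kappa$ depend on $C_e,\tM,C_\rho,S$ and $\min\rho_0$. This version still suffices for Theorem~\ref{Thm:AnalCont:Joint}: there $A$ and $D$ can be enlarged once, before the induction in $m$. After that, Lemma~\ref{Lemma:Recur:Joint} and the modified estimate close with fixed constants.
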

\begin{proof}
The proof of this theorem is similar to that of Theorem 7.2
found in \cite{Nicholls19b}.
\end{proof}

We now establish a requisite recursive estimate.
\begin{Lemma}
\label{Lemma:Recur:Joint}
Given any integer $\ell \geq 1$, if 
$\rho_0(x,z), \rho_1(x,z) \in C^{\omega}_{\ell}(S_v)$
so that
\bes
\HolderNorm{\frac{\px^r \pz^t}{(r+t)!} \rho_0}{\ell}
  \leq C_{\rho} \frac{A^r}{(r+1)^2} \frac{D^t}{(t+1)^2},
  \quad
\HolderNorm{\frac{\px^r \pz^t}{(r+t)!} \rho_1}{\ell}
  \leq C_{\rho} \frac{A^r}{(r+1)^2} \frac{D^t}{(t+1)^2},
\ees
for all $r, t \geq 0$, for some $C_{\rho}, A, D > 0$, and
\bes
\SobNorm{\frac{\px^r \pz^t}{(r+t)!} v_m}{2}
  \leq \underline{K} B^m \frac{A^r}{(r+1)^2} \frac{D^t}{(t+1)^2},
\quad
\forall\ m < M,
\quad
\forall\ r, t \geq 0,
\ees
for some $\underline{K}, B > 0$ then
for the function $F_M$ in \eqref{Eqn:LCPRCP:t:m:e},
\bes
\SobNorm{\frac{\px^r \pz^t}{(r+t)!} F_M}{0}
  \leq \underline{\tilde{C}} \underline{K} B^{M-1} 
  \frac{A^r}{(r+1)^2} \frac{D^t}{(t+1)^2},
\ees
for some $\underline{\tilde{C}} > 0$.
\end{Lemma}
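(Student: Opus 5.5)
We bound each of the three pieces of $F_M$ in \eqref{Eqn:LCPRCP:t:m:e} separately, after applying the operator $\px^r \pz^t/(r+t)!$. The plan is to distribute the derivatives with the Leibniz rule and then resum the resulting convolution-type sums using Lemma~\ref{Lemma:S}. For definiteness, consider the first term $\rho_0 \Div{\rho_1 \Grad{v_{M-1}}}$; the other two are handled identically. Expand it as a sum of products $\rho_0\,\partial_j\rho_1\,\partial_j v_{M-1} + \rho_0\,\rho_1\,\partial_j^2 v_{M-1}$, with $j\in\{x,z\}$. Each product has at most three factors: two coefficient factors from $\{\rho_0,\rho_1,\partial\rho_0,\partial\rho_1\}$ and one factor $\partial^\beta v_{M-1}$ with $|\beta|\le 2$. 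The $L^2=H^0$ norm of such a product is bounded by the $C^0$ norms of the coefficient factors times $\SobNorm{\partial^\beta v}{0}\le \SobNorm{v}{2}$. Since $\ell\ge1$, one derivative falling on $\rho_{0,1}$ is absorbed into the $C^\ell$ norm in the hypothesis.

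Next apply $\px^r\pz^t$ to a triple product $f g\, w$ by the multinomial Leibniz rule:
\[
\frac{\px^r\pz^t}{(r+t)!}(fgw)=\sum \frac{r!\,t!}{(r+t)!}\binom{r}{r_1,r_2,r_3}\binom{t}{t_1,t_2,t_3}\,\px^{r_1}\pz^{t_1}f\;\px^{r_2}\pz^{t_2}g\;\px^{r_3}\pz^{t_3}w .
\]
We want to rewrite this in terms of the normalized quantities $\px^{r_i}\pz^{t_i}(\cdot)/(r_i+t_i)!$. That requires the combinatorial inequality
\[
\frac{r!\,t!}{(r+t)!}\binom{r}{r_1,r_2,r_3}\binom{t}{t_1,t_2,t_3}\prod_i (r_i+t_i)! \;\le\; 1 .
\]
This follows from $\binom{r+t}{\,r_1+t_1,\,r_2+t_2,\,r_3+t_3\,}\ge \binom{r}{r_1,r_2,r_3}\binom{t}{t_1,t_2,t_3}$, which is Vandermonde-type. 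After this reduction each term is bounded by
\[
C_\rho^2\,\underline K B^{M-1}\, A^r D^t\prod_i\frac{1}{(r_i+1)^2(t_i+1)^2}.
\]
The double sums in Lemma~\ref{Lemma:S}, applied once in $r$ and once in $t$, give
\[
\sum_{r_1+r_2+r_3=r}\prod_i (r_i+1)^{-2}\le S^2/(r+1)^2,
\]
and the analogous bound in $t$. Collecting everything yields the claimed bound $\underline{\tilde C}\,\underline K B^{M-1}A^rD^t/((r+1)^2(t+1)^2)$. For the first two pieces, $\underline{\tilde C}$ is a multiple of $C_\rho^2 S^4$.

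The derivatives from the divergence and gradient are the one real subtlety. We can commute $\px^r\pz^t$ past $\partial_j$ and let the extra derivative land on a factor whose norm already controls it. On $v_{M-1}$ it is absorbed by the $H^2$ norm. On $\rho_{0,1}$ it is absorbed by $\HolderNorm{\cdot}{\ell}$ with $\ell\ge1$, while the counting of $(r_i+t_i)!$ is unchanged. This is exactly why the hypothesis is stated in the $C^\ell$ norm with $\ell\ge1$ and the $v_m$ estimates in $H^2$.

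The third piece, $\rho_1\Div{\rho_1\Grad{v_{M-2}}}$, produces $\underline K B^{M-2}$ in place of $B^{M-1}$. We write $B^{M-2}=B^{M-1}/B$ and include $1/B$ in $\underline{\tilde C}$. To keep $\underline{\tilde C}$ independent of $B$, we assume $B\ge1$, which costs nothing later. This gives $\underline{\tilde C}=3\cdot 2\,C_\rho^2 S^4$ up to a modest numerical factor counting the $x$/$z$ components. The main obstacle is the combinatorial inequality above, which lets factorials of total order be split. If the Vandermonde comparison proves awkward, I would fall back on bounding $\binom{r}{\cdot}\binom{t}{\cdot}\le\binom{r+t}{\cdot}$ via a generating-function argument, $(1+x)^r(1+y)^t$ versus $(1+x)^{r+t}$ with $x=y$.
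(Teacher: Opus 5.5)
Your proof is correct and has the same skeleton as the paper's. Both arguments use a Leibniz expansion of each term of $F_M$, then the hypotheses on $\rho_0$, $\rho_1$, $v_{M-1}$, with the extra derivative from the divergence/gradient absorbed by $\ell\geq 1$ and by the $H^2$ norm. Both then apply the double sum of Lemma~\ref{Lemma:S} once in $r$ and once in $t$, giving a constant of order $C_\rho^2 S^4$.

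The real difference is the combinatorial step. The paper expands with the split normalization $\frac{\px^a}{a!}\frac{\pz^b}{b!}$ and discards the prefactor $r!\,t!/(r+t)!$ using only $r!\,t!\leq (r+t)!$. It then inserts the hypotheses as if they were stated in that normalization. They are in fact stated with $(a+b)!$, so each factor really costs an extra $\binom{a+b}{a}$. Your inequality $\binom{r}{r_1,r_2,r_3}\binom{t}{t_1,t_2,t_3}\leq \binom{r+t}{r_1+t_1,\,r_2+t_2,\,r_3+t_3}$ is exactly what absorbs these factors. It does so while keeping the same $A$ and $D$, which the induction in Theorem~\ref{Thm:AnalCont:Joint} needs. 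Your route is therefore the more careful version of the paper's.

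Two smaller differences:
\begin{itemize}
\item You expand the divergence before differentiating, whereas the paper keeps $\Div{\rho_1\Grad{v}}$ intact and uses the product lemma with $\tM$.
\item You treat the $B^{M-2}$ term explicitly via $B\geq 1$, which the paper passes over with ``the others are similar.''
\end{itemize}

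One slip to fix: with standard multinomial coefficients, both of your displays carry a spurious factor $r!\,t!$. The Leibniz coefficient in $\frac{\px^r\pz^t}{(r+t)!}(fgw)$ is $\binom{r}{r_1,r_2,r_3}\binom{t}{t_1,t_2,t_3}/(r+t)!$. Your displayed inequality fails already for $r=2$, $t=0$, $r_1=2$, where the left side equals $2$. Without the $r!\,t!$ it is exactly the Vandermonde bound you cite, and the rest of your argument goes through unchanged.
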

\begin{proof}
We recall that
\bes
F_M = -\rho_0 \Div{ \rho_1 \Grad{v_{M-1}} }
  - \rho_1 \Div{ \rho_0 \Grad{v_{M-1}} }
  - \rho_1 \Div{ \rho_1 \Grad{v_{M-2}} },
\ees
and focus on the first term (as the others are similar),
\bes
F_M^{(1)} = -\rho_0 \Div{ \rho_1 \Grad{v_{M-1}} },
\ees
so that
\begin{align*}
\frac{\px^r \pz^t}{(r+t)!} F_M^{(1)}
  & = -\frac{r! t!}{(r+t)!} \sum_{j=0}^{r} \sum_{k=0}^{j}
  \sum_{p=0}^{t} \sum_{q=0}^{p} 
  \left( \frac{\px^{r-j}}{(r-j)!} \frac{\pz^{t-p}}{(t-p)!} \rho_0 \right) \\
  & \quad \times
  \Div{ \left( \frac{\px^{j-k}}{(j-k)!} \frac{\pz^{p-q}}{(p-q)!} \rho_1 \right)
  \Grad{ \left\{ \frac{\px^k}{k!} \frac{\pz^q}{q!} v_{M-1}} \right\} }.
\end{align*}
Now, using $(r! t!) \leq (r+t)!$, we estimate
\begin{align*}
\SobNorm{ \frac{\px^r \pz^t}{(r+t)!} F_M^{(1)} }{0}
  & \leq \frac{r! t!}{(r+t)!} \sum_{j=0}^{r} \sum_{k=0}^{j}
  \sum_{p=0}^{t} \sum_{q=0}^{p} 
  \tM \HolderNorm{\frac{\px^{r-j}}{(r-j)!} \frac{\pz^{t-p}}{(t-p)!} \rho_0}{0} \\
  & \quad \times
  \tM \HolderNorm{\frac{\px^{j-k}}{(j-k)!} \frac{\pz^{p-q}}{(p-q)!} \rho_1}{1}
  \SobNorm{\frac{\px^k}{k!} \frac{\pz^q}{q!} v_{M-1}}{2} \\
  & \leq \sum_{j=0}^{r} \sum_{k=0}^{j}
  \sum_{p=0}^{t} \sum_{q=0}^{p} 
  \tM C_{\rho} \frac{A^{r-j}}{(r-j+1)^2} \frac{D^{t-p}}{(t-p+1)^2} \\
  & \quad \times
  \tM C_{\rho} \frac{A^{j-k}}{(j-k+1)^2} \frac{D^{p-q}}{(p-q+1)^2}
  \underline{K} B^{M-1} \frac{A^k}{(k+1)^2} \frac{D^q}{(q+1)^2}.
\end{align*}
Continuing
\begin{align*}
\SobNorm{ \frac{\px^r \pz^t}{(r+t)!} F_M^{(1)} }{0}
  & \leq \tM^2 C_{\rho}^2 \underline{K} B^{M-1} \frac{A^r}{(r+1)^2} \frac{D^t}{(t+1)^2} \\
  & \quad \times
  \sum_{j=0}^{r} \sum_{k=0}^{j} \frac{(r+1)^2}{(r-j+1)^2 (j-k+1)^2 (k+1)^2} \\
  & \quad \times
  \sum_{p=0}^{t} \sum_{q=0}^{p} \frac{(t+1)^2}{(t-p+1)^2 (p-q+1)^2 (q+1)^2} \\
  & \leq \tM^2 C_{\rho}^2 \underline{K} B^{M-1} \frac{A^r}{(r+1)^2} \frac{D^t}{(t+1)^2}
  S^2 S^2,
\end{align*}
and we are done provided that we choose
\bes
\underline{\tilde{C}} \geq \tM^2 C_{\rho}^2 S^4.
\ees
\end{proof}

We are now ready to state and establish our joint analyticity result.
\begin{Thm}
\label{Thm:AnalCont:Joint}
Given any integer $\ell \geq 1$, if $(\omega,\bchi,\rho_0) \in \cP$,
and $\rho_0(x,z), \rho_1(x,z) \in C^{\omega}_{\ell}(S_v)$ so that
\bes
\HolderNorm{\frac{\px^r \pz^t}{(r+t)!} \rho_0}{\ell}
  \leq C_{\rho} \frac{A^r}{(r+1)^2} \frac{D^t}{(t+1)^2},
\quad
\HolderNorm{\frac{\px^r \pz^t}{(r+t)!} \rho_1}{\ell}
  \leq C_{\rho} \frac{A^r}{(r+1)^2} \frac{D^t}{(t+1)^2},
\ees
for all $r, t \geq 0$, for some $C_{\rho}, A, D > 0$, then
the series \eqref{Eqn:v:tExp} converges strongly.
More precisely,
\be
\label{Eqn:AnalCont:Joint}
\SobNorm{\frac{\px^r \pz^t}{(r+t)!} v_m}{2} \leq \underline{K} B^m,
\quad
\forall\ m, r, t \geq 0,
\ee
for some constants $\underline{K}, B > 0$.
\end{Thm}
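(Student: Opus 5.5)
We will establish the stronger, weighted estimate
\bes
\SobNorm{\frac{\px^r \pz^t}{(r+t)!} v_m}{2}
  \leq \underline{K} B^m \frac{A^r}{(r+1)^2} \frac{D^t}{(t+1)^2},
\quad
\forall\ m, r, t \geq 0,
\ees
by induction on $m$. Since $(r+1)^{-2}(t+1)^{-2} \leq 1$, this gives \eqref{Eqn:AnalCont:Joint} for $A, D \leq 1$; for general $A, D$ it gives \eqref{Eqn:AnalCont:Joint} with the factors $A^r D^t$ retained. These factors are exactly what joint analyticity in $(x,z,\delta)$ requires. The argument parallels the proof of Theorem~\ref{Thm:AnalCont}, with Theorem~\ref{Thm:EllEst:Joint} replacing Theorem~\ref{Thm:EllEst} and Lemma~\ref{Lemma:Recur:Joint} replacing Lemma~\ref{Lemma:Recur}.

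\textbf{Base case $m=0$.} Here $F_0 \equiv 0$, $R \equiv 0$, and $Q = \tau$. From \eqref{Eqn:phi:Def}, \eqref{Eqn:psi:Def} and \eqref{Eqn:tau}, $\tau$ is a constant multiple of $\exp(i\alpha x)$. Hence $\px^r \tau/r! = (i\alpha)^r \tau/r!$, and
\bes
\SobNorm{\frac{\px^r}{r!}\tau}{1/2} \leq C_\tau \frac{\Abs{\alpha}^r}{r!} \leq C_\tau' \frac{A^r}{(r+1)^2},
\ees
since $r!$ dominates $(r+1)^2 (\Abs{\alpha}/A)^r$ up to a constant, for any $A>0$. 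Theorem~\ref{Thm:EllEst:Joint} then applies with $C_F = C_R = 0$ and $C_Q = C_\tau'$, and yields the bound with $\underline{K} := \kappa(h) C_\tau'$. If needed we may enlarge $A$ and $D$ at the outset. This is harmless, because the hypotheses on $\rho_0$ and $\rho_1$ remain valid under enlargement of $A, D$.

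\textbf{Inductive step.} Assume the weighted bound holds for all $m < M$. Lemma~\ref{Lemma:Recur:Joint}, whose hypotheses are precisely our assumptions on $\rho_0, \rho_1$ together with the inductive hypothesis, gives
\bes
\SobNorm{\frac{\px^r \pz^t}{(r+t)!} F_M}{0}
  \leq \underline{\tilde{C}} \underline{K} B^{M-1} \frac{A^r}{(r+1)^2} \frac{D^t}{(t+1)^2}.
\ees
The $v_{M-2}$ term contributes $B^{M-2}$. We absorb it by assuming $B \geq 1$, or we track it separately as in Theorem~\ref{Thm:AnalCont}, giving a constant of the form $\underline{\tilde C}_1 B^{M-1} + \underline{\tilde C}_2 B^{M-2}$. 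We then apply Theorem~\ref{Thm:EllEst:Joint} to problem \eqref{Eqn:LCPRCP:t:m} with $Q \equiv R \equiv 0$ and $C_F = \underline{\tilde{C}} \underline{K} B^{M-1}$. This gives the bound for $v_M$ with constant $\kappa(h) \underline{\tilde{C}} \underline{K} B^{M-1}$. Choosing
\bes
B \geq \max\left\{ 1,\ 2\kappa(h) \underline{\tilde C}_1,\ \sqrt{2\kappa(h)\underline{\tilde C}_2} \right\}
\ees
closes the induction. Strong convergence of \eqref{Eqn:v:tExp} then follows for $\Abs{\delta} < 1/B$, and summing over $r, t$ gives joint analyticity in $(x,z,\delta)$ for $\Abs{x}, \Abs{z}$-extensions smaller than $1/A$ and $1/D$.

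\textbf{Main obstacle.} The delicate point is that the constants must be uniform in $r, t$ and $M$. In particular, $\underline{\tilde{C}}$ in Lemma~\ref{Lemma:Recur:Joint} must not depend on $r$ or $t$. This relies on Lemma~\ref{Lemma:S}, and on the fact that the $C^1$ norm of $\rho_1$ and the $H^2$ norm of $v$ fully absorb the single extra derivative in $\mathrm{div}[\rho_1 \nabla v]$. One must also check that the hypothesis $\ell \geq 1$ is exactly what makes $\HolderNorm{\cdot}{1}$ available in that lemma.

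A secondary point is the convention $v_{-1} \equiv 0$ in \eqref{Eqn:LCPRCP:t:m:e}, so that the case $M=1$ is covered. Finally, the hypothesis $(\omega,\bchi,\rho_0) \in \cP$ should be read as $(\omega,\bchi,X_0)\in\cP$, which is what Theorem~\ref{Thm:EllEst:Joint} needs.
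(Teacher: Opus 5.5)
Your proposal is correct and follows the paper's route. You induct on $m$, apply Theorem~\ref{Thm:EllEst:Joint} with $Q=\tau$ at $m=0$, and for the inductive step feed Lemma~\ref{Lemma:Recur:Joint} into Theorem~\ref{Thm:EllEst:Joint} with $C_Q=C_R=0$ before taking $B$ large. Like the paper, you actually prove the weighted bound carrying $A^r D^t/((r+1)^2(t+1)^2)$, and your explicit check that $\tau = c\,e^{i\alpha x}$ meets the analytic hypothesis, together with your separate treatment of the $B^{M-2}$ contribution from $v_{M-2}$, fills in details the paper leaves implicit.
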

\begin{proof}
As before, we prove the estimate \eqref{Eqn:AnalCont:Joint} by induction.
Once again, from the definitions of $\phi$ and $\psi$ it is clear that 
$\tau \in C^{\omega}_{\ell}$ for some $C_{\tau} > 0$.
For $m = 0$ we invoke Theorem~\ref{Thm:EllEst:Joint} with 
$F \equiv 0$, $Q = \tau$, and $R \equiv 0$ to realize
\bes
\SobNorm{\frac{\px^r \pz^t}{(r+t)!} v_0}{2}
  \leq \kappa(h) C_{\tau} \frac{A^r}{(r+1)^2} \frac{D^t}{(t+1)^2},
\quad
\forall\ r, t \geq 0,
\ees
and we set $\underline{K} := \kappa(h) C_{\tau}$. We now assume that
\eqref{Eqn:AnalCont:Joint} is true for all $m < M$ and use 
Lemma~\ref{Lemma:Recur:Joint} to invoke Theorem~\ref{Thm:EllEst:Joint}
with
\bes
C_F := \underline{\tilde{C}} \underline{K} B^{M-1},
\quad
C_Q = C_R = 0.
\ees
Therefore we have
\bes
\SobNorm{ \frac{\px^r \pz^t}{(r+t)!}}{2} 
  \leq \kappa(h) \underline{\tilde{C}} \underline{K} B^{M-1} 
  \frac{A^r}{(r+1)^2} \frac{D^t}{(t+1)^2},
  \quad
  \forall\ r, t \geq 0.
\ees
We are finished provided that we choose
\bes
B > \kappa(h) \underline{\tilde{C}}.
\ees
\end{proof}

%
%

\section{Numerical Results}
\label{Sec:NumRes}

At this point we discuss how a numerical implementation of
the HOPE recursions we have derived 
can be utilized to deliver highly accurate approximations of 
scattering returns in a robust and reliable manner. We begin
with a brief description of our High--Order Spectral (HOS)
approach, and then move to comparisons of our approximations
to both the exact solution of layered media scattering
and a smoothed version of this.

%
%

\subsection{Implementation}
\label{Sec:Implementation}

The implementation we describe focuses on approximating the
solution of \eqref{Eqn:LCPRCP:m} in both LCP and RCP cases.
Of course this cannot be achieved for \textit{all} $0 \leq m < \infty$
and we naturally approximate solutions with the finite--order truncation
of \eqref{Eqn:v:Exp},
\be
\label{Eqn:vM:Approx}
v^M(x,z;\delta) := \sum_{m=0}^{M} v_m(x,z) \delta^m,
\ee
where the $v_m(x,z)$ approximately satisfy \eqref{Eqn:LCPRCP:m} 
for $0 \leq m \leq M$.
For this we appeal to the highly accurate family of HOS methods
\cite{GottliebOrszag77,Boyd01,ShenTangWang11}, in particular,
a Fourier--Chebyshev approach. More specifically, we consider
\bes
v_m(x,z) \approx v_m^{N_x,N_z}(x,z) := \sum_{p=-N_x/2}^{N_x/2-1}
  \sum_{q=0}^{N_z} \hat{v}_{m,p,q} T_q(z/h) e^{i \alpha_p x},
\ees
where $T_q$ is the $q$--th Chebyshev polynomial. To prescribe the
Fourier--Chebyshev coefficients, $\{ \hat{v}_{m,p,q} \}$, we follow
the collocation philosophy by demanding that \eqref{Eqn:LCPRCP:m} be
true at the gridpoints
\bes
\{ x_j = j (d/N_x)\ |\ 0 \leq j \leq N_x-1 \},
\quad
\{ z_r = h \cos(\pi r/N_z)\ |\ 0 \leq r \leq N_z \}.
\ees
These constraints generate a sequence of linear systems of equations
(at each wavenumber $p$ and order $m$) which can be rapidly formed
via the fast Fourier and Chebyshev transforms
\cite{GottliebOrszag77,Boyd01,ShenTangWang11}, and quickly solved
using Gaussian elimination \cite{Nicholls19b}.

As with all HOPE algorithms, careful thought is required
for the summation which appears in \eqref{Eqn:vM:Approx}. For instance,
one must form the sum
\bes
S^M(\delta) = S^M_{p,q}(\delta) := \sum_{m=0}^{M} \hat{v}_{m,p,q} \delta^m,
\ees
which approximates the infinite Taylor sum
\bes
S(\delta) = \sum_{m=0}^{\infty} \hat{v}_{m,p,q} \delta^m.
\ees
As justified by Theorem~\ref{Thm:AnalCont}, this Taylor sum, $S(\delta)$,
will converge for all $\delta$ in a (perhaps small) 
neighborhood of the origin in
the complex plane. However, Theorem~\ref{Thm:AnalCont} also concludes
that the domain of analyticity of $S(\delta)$ includes a (perhaps smaller)
neighborhood of the \textit{entire} real axis. A fast and robust algorithm
for accessing this region of extended analyticity is Pad\'e approximation
\cite{BakerGravesMorris96} which we have used with great success in
the past \cite{NichollsReitich00b,Nicholls19b,NichollsVo23,NichollsVo24}.
The Pad\'e approximant of $S^M(\delta)$ is the rational function
\bes
[P/Q](\delta) := \frac{a^P(\delta)}{b^Q(\delta)}
  = \frac{\sum_{p=0}^{P} a_p \delta^p}
  {\sum_{q=0}^{Q} b_q \delta^q},
\ees
where
\bes
[P/Q](\delta) = S^M(\delta) + \BigOh{\delta^{P+Q+1}}.
\ees
Well--known formulas exist for the recovery of these 
$\{ a_p, b_q \}$ \cite{BakerGravesMorris96} and the
resulting approximants often deliver remarkably accurate 
approximations of $S(\delta)$, even well outside of the \textit{disk}
of convergence of the original Taylor series (see, e.g., Chapter~8
of \cite{BenderOrszag78}).

%
%

\subsection{Scattering by Triply Layered Media}
\label{Sec:LayMedia}

In order to demonstrate the utility of our numerical implementation
of the HOPE recursions, we consider a specific problem featuring
an exact solution: Scattering of plane electromagnetic radiation
by a layered medium featuring achiral and chiral materials with
flat interfaces. In more detail, we consider a material with
constant permittivity, $\epsilon_0$, and permeability, $\mu_0$,
and layered chirality
\be
\label{Eqn:Chi:Layered}
\chi = \chi(z) = \begin{cases} 
  0, & t < z < h, \\
  \chi', & -t < z < t, \\
  0, & -h < z < -t,
  \end{cases}
\ee
for $0 < t < h$, with incident radiation $\tau$, \eqref{Eqn:tau}.
It is not difficult to show that the solution of this problem is
\be
\label{Eqn:Flat:Exact}
v(x,z) = \begin{cases} 
  R \exp(i \alpha x + i \gamma z), & t < z < h, \\
  U \exp(i \alpha x + i \gamma' z)
  + D \exp(i \alpha x - i \gamma' z), & -t < z < t, \\
  T \exp(i \alpha x - i \gamma z), & -h < z < -t,
  \end{cases}
\ee
where
\bes
\alpha^2 + \gamma^2 = \kz^2,
\quad
\alpha^2 + (\gamma')^2 = \begin{cases} \kz^2/(1-\kz \chi')^2, & \text{LCP}, \\
  \kz^2/(1+\kz \chi')^2, & \text{RCP}, \end{cases}
\ees
and jump conditions at the interfaces deliver a \textit{linear}
system of four equations for the four unknowns, $\{ R, U, D, T \}$
\cite{Yeh05}. These can be readily solved
by Gaussian elimination, and we use these as a convenient \textit{exact}
solution against which to test our implementation.

Our theorems in their current form do not permit such an irregular
chirality function (merely $L^{\infty}$), but, while such a theory
could perhaps be discovered, the resulting numerics would certainly
be of very low quality. To avoid this in our numerics
we approximate the layered
chirality \eqref{Eqn:Chi:Layered} by the smoothed one defined
in \eqref{Eqn:X:Smooth} (see Figure~\ref{Fig:Chirality}) with the
choice $w=100$. With this in hand we can be more precise about
our experiments: We selected the following particular geometric
constants
\bes
d = 0.8,
\quad
h = 0.95,
\quad
t = 0.5,
\ees
and electromagnetic constants
\bes
\lambda = 0.7,
\quad
\theta = 30^{\circ}.
\ees
Akin to \cite{Nicholls19b} we chose two configurations: (i.) small 
($\chi' = 0.01$) and (ii.) large ($\chi' = 0.04$) deviations from
the zero background chirality. For the small deviation we chose
\bes
16 \leq N_x \leq 32,
\quad
2 \leq M \leq 16,
\ees
and for the large perturbation case
\bes
16 \leq N_x \leq 64,
\quad
2 \leq M \leq 32,
\ees
where $N_z = N_x$. We measured errors in the $L^{\infty}$ norm as
\bes
\text{Error} = \text{Error}^{N_x,N_z,M}
  := \SupNorm{v^{N_x,N_z,M}-v^{\text{Exact}}}.
\ees

%
%

\subsection{Convergence}
\label{Sec:Conv}

We now present the results of our numerical experiments comparing
numerical solution of the HOPE recursions versus the flat--interface
exact solution. We begin with the small deviation case where
$\chi'=0.01$ and show the convergence of our scheme as
$N_x=N_z$ (Figures~\ref{Fig:ErrNx:Exact:Small:LCP}
and \ref{Fig:ErrNx:Exact:Small:RCP}) and 
$M$ (Figures~\ref{Fig:ErrM:Exact:Small:LCP}
and \ref{Fig:ErrM:Exact:Small:RCP}) are refined. In a manner
similar to our previous work \cite{Nicholls19b}, the results are
satisfying in that the error appears to decay as the appropriate
parameters are increased, however, they are also disappointing
as the \textit{spectral} nature of our HOPE method with respect
to both the spatial ($x$ and $z$) and deformation ($\delta$)
parameters would suggest an \textit{exponential} rate of
convergence \cite{GottliebOrszag77,Boyd01,ShenTangWang11}.
However, this mystery is quickly resolved by noting that solutions
in both LCP and RCP will be merely $H^1$ in the case where
$\chiv \in L^{\infty}$. In the next section we return to this issue.
%
%
%
%
%
\begin{figure}[hbt]
  \begin{center}
  \includegraphics[width=0.8\textwidth]{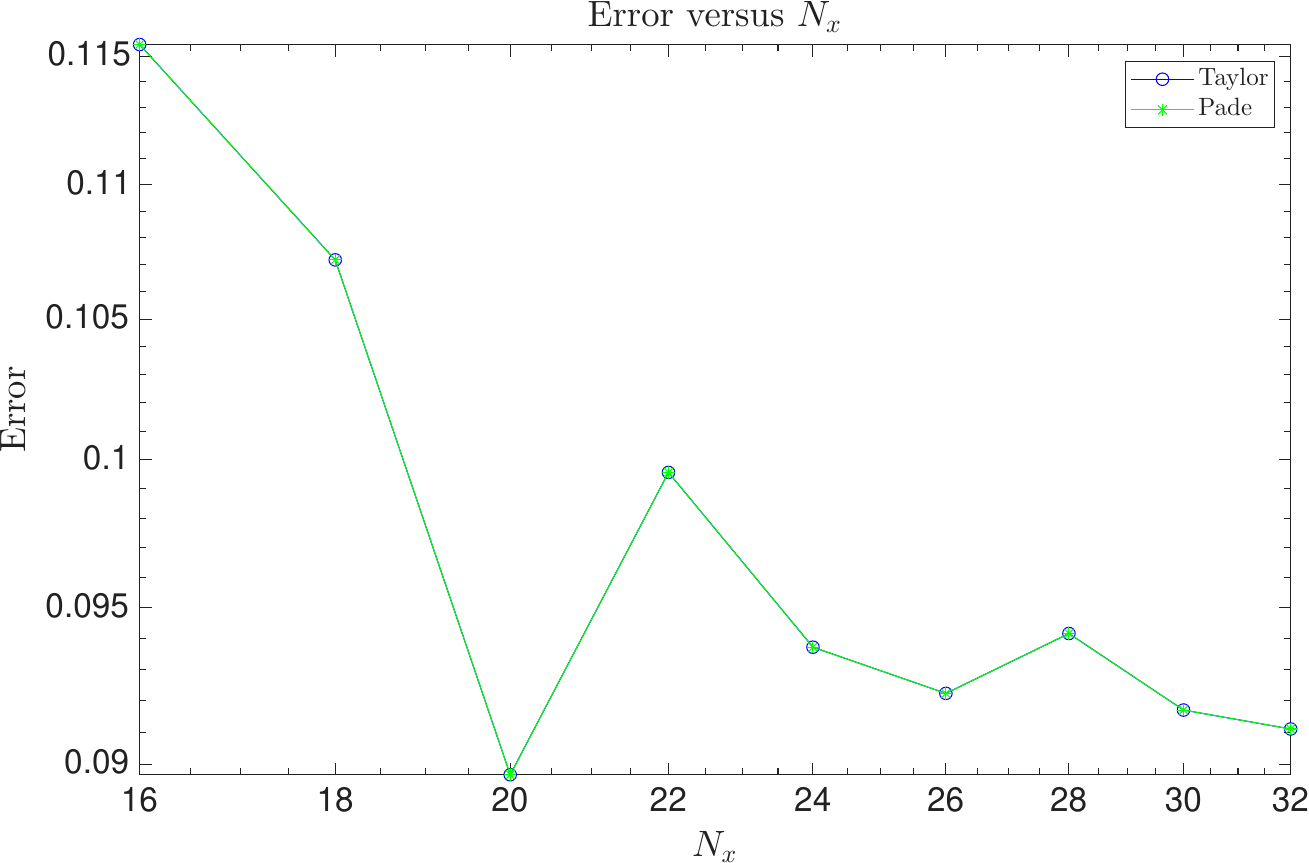}
  \caption{Error versus $N_x$ for $\chi' = 0.01$ compared
  with the exact solution: LCP.}
  \label{Fig:ErrNx:Exact:Small:LCP}
  \end{center}
\end{figure}
\begin{figure}[hbt]
  \begin{center}
  \includegraphics[width=0.8\textwidth]{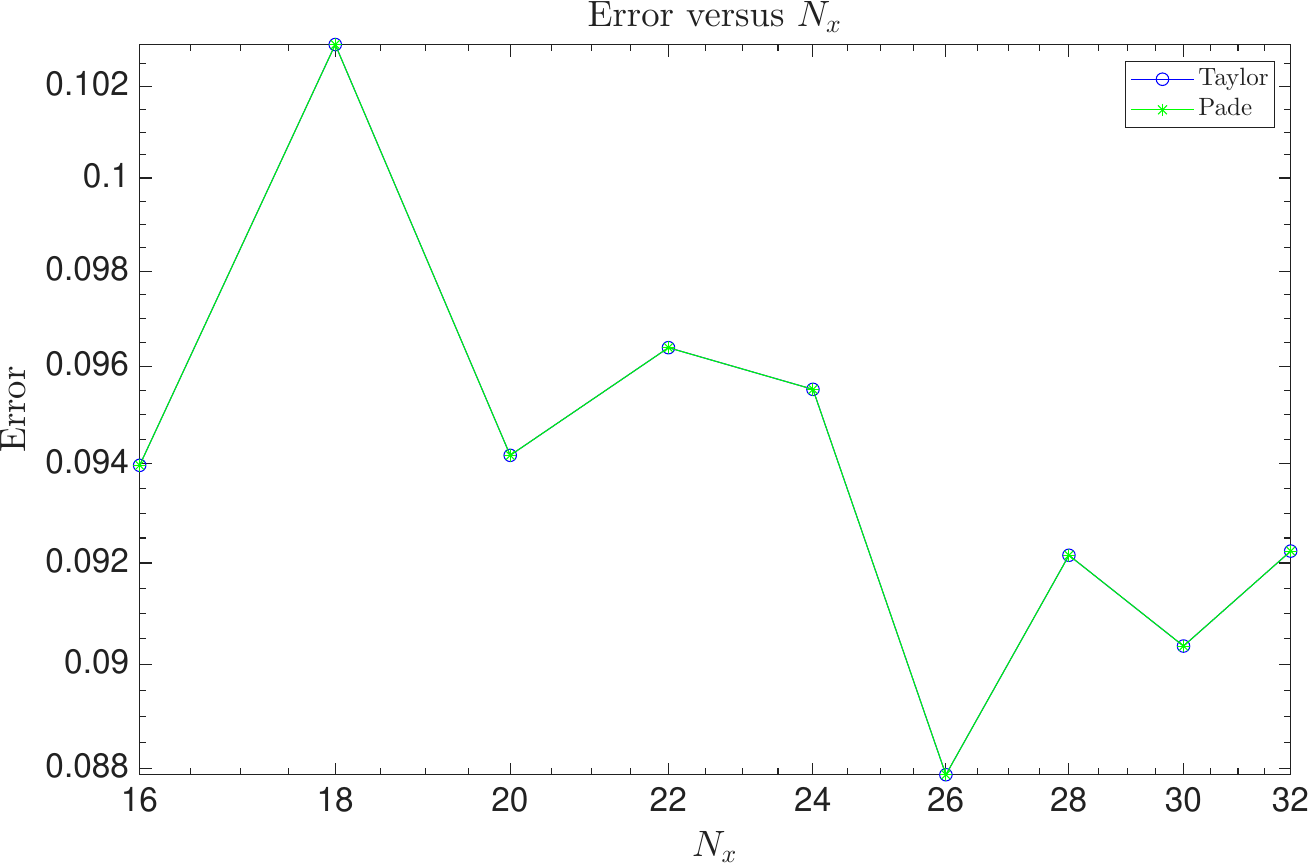}
  \caption{Error versus $N_x$ for $\chi' = 0.01$ compared
  with the exact solution: RCP.}
  \label{Fig:ErrNx:Exact:Small:RCP}
  \end{center}
\end{figure}
%
%
%
%
%
\begin{figure}[hbt]
  \begin{center}
  \includegraphics[width=0.8\textwidth]{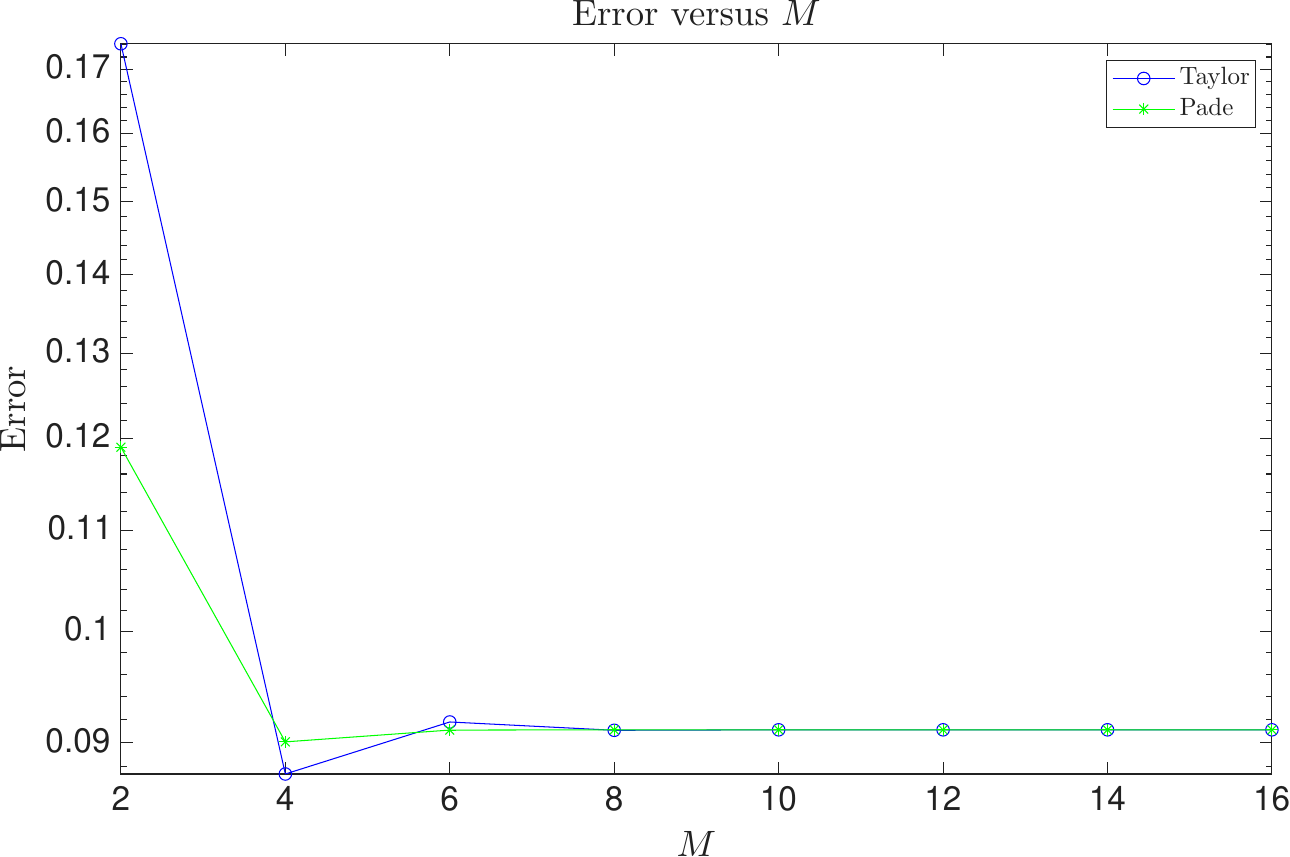}
  \caption{Error versus $M$ for $\chi' = 0.01$ compared
  with the exact solution: LCP.}
  \label{Fig:ErrM:Exact:Small:LCP}
  \end{center}
\end{figure}
\begin{figure}[hbt]
  \begin{center}
  \includegraphics[width=0.8\textwidth]{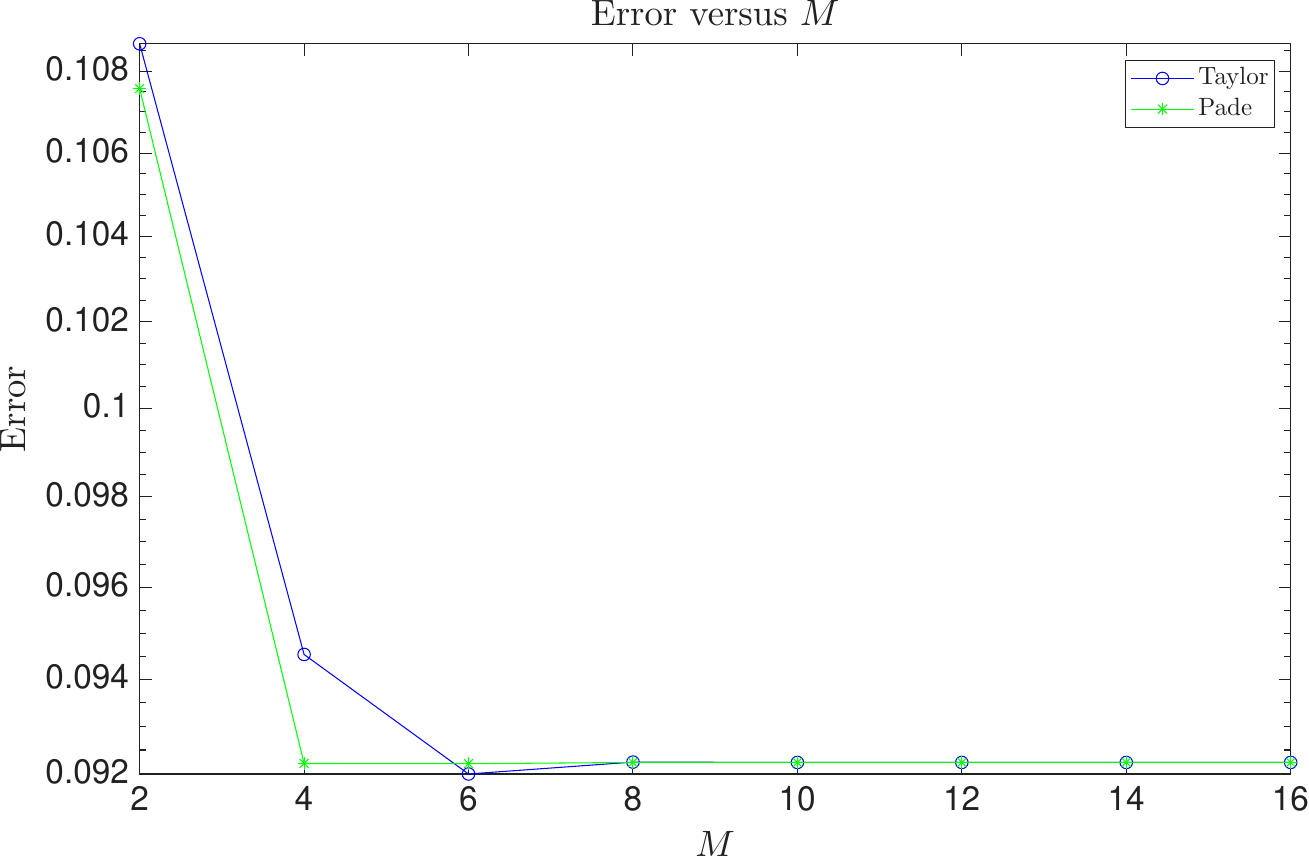}
  \caption{Error versus $M$ for $\chi' = 0.01$ compared
  with the exact solution: RCP.}
  \label{Fig:ErrM:Exact:Small:RCP}
  \end{center}
\end{figure}

We repeated the above experiments in the case of a large deviation
$\chi'=0.04$ and report the results of this in 
Figures~\ref{Fig:ErrNx:Exact:Large:LCP},
\ref{Fig:ErrNx:Exact:Large:RCP},
\ref{Fig:ErrM:Exact:Large:LCP},
and \ref{Fig:ErrM:Exact:Large:RCP}
for $N_x=N_z$ and $M$ refinement, respectively. Here we notice a
new phenomena: Substantial divergence of behavior based upon
summation method. More specifically, while Pad\'e summation
delivers reasonable (though not outstanding) results, Taylor
summation produces completely unreliable answers. The reason
for this is that for this choice of $\chi'$, the value $\delta=-1$
is \textit{outside} the disk of convergence of our solution
\eqref{Eqn:v:Exp} and only with numerical analytic continuation
can we recover useful answers. However, even in the
Pad\'e case, we are disappointed with the \textit{rate} of refinement
which, as before, is readily explained by the lack of smoothness
in the underlying exact solution.
%
%
%
%
%
\begin{figure}[hbt]
  \begin{center}
  \includegraphics[width=0.8\textwidth]{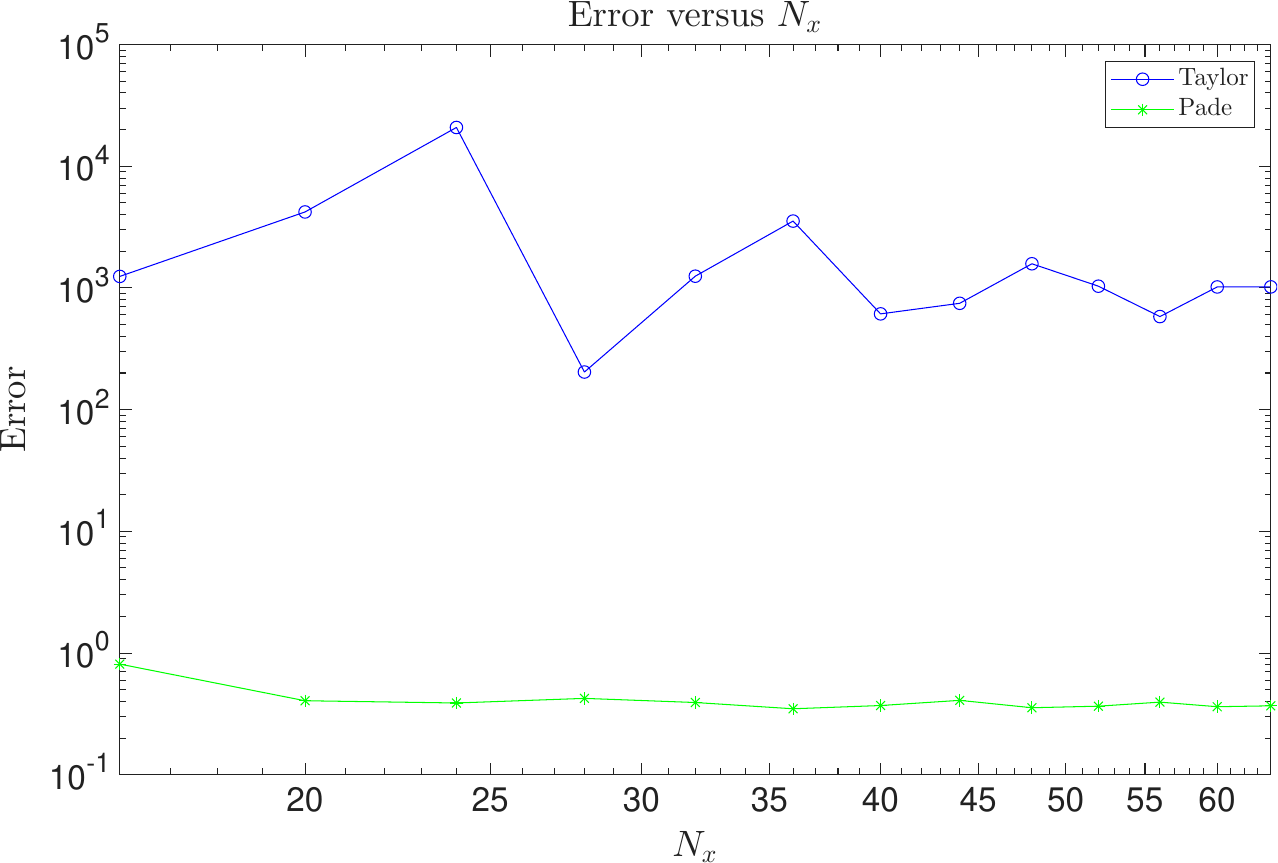}
  \caption{Error versus $N_x$ for $\chi' = 0.04$ compared
  with the exact solution: LCP.}
  \label{Fig:ErrNx:Exact:Large:LCP}
  \end{center}
\end{figure}
\begin{figure}[hbt]
  \begin{center}
  \includegraphics[width=0.8\textwidth]{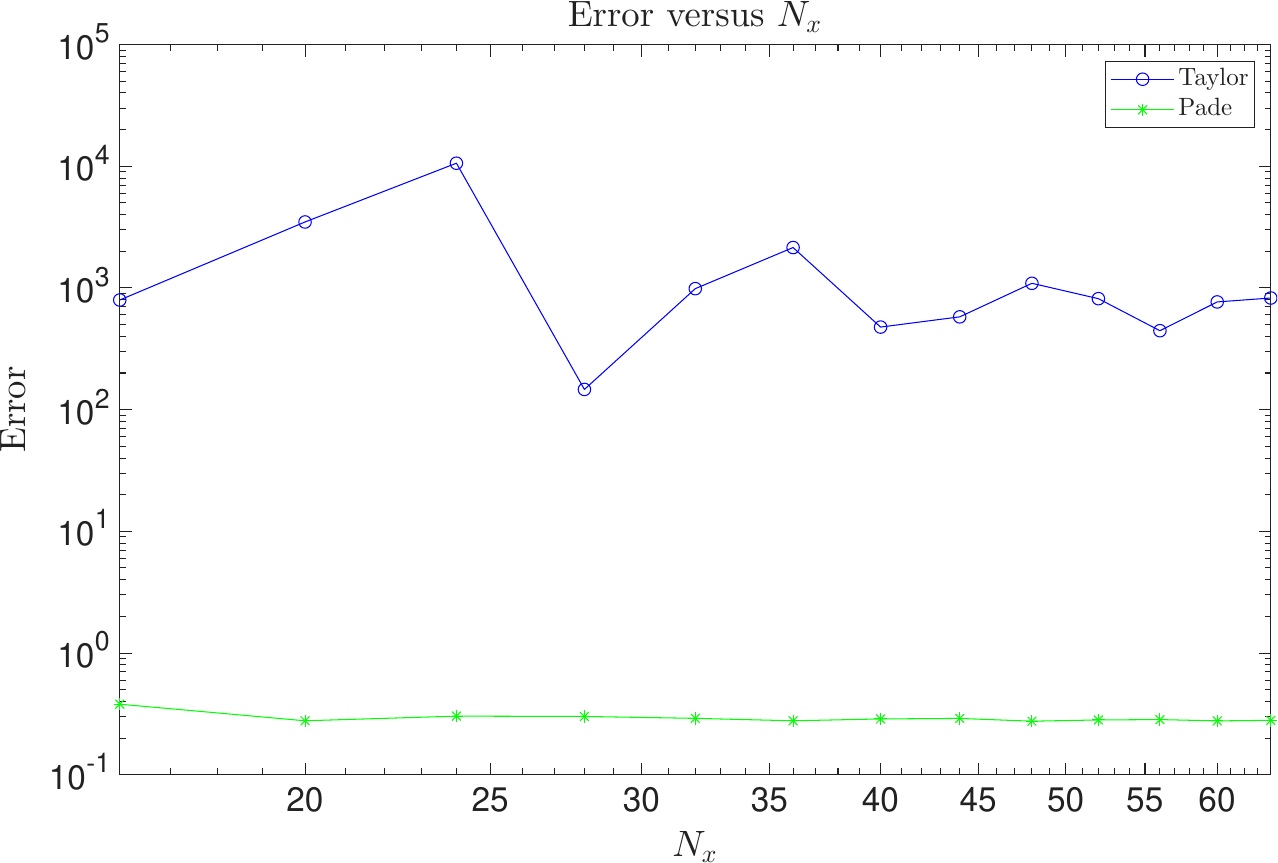}
  \caption{Error versus $N_x$ for $\chi' = 0.04$ compared
  with the exact solution: RCP.}
  \label{Fig:ErrNx:Exact:Large:RCP}
  \end{center}
\end{figure}
%
%
%
%
%
\begin{figure}[hbt]
  \begin{center}
  \includegraphics[width=0.8\textwidth]{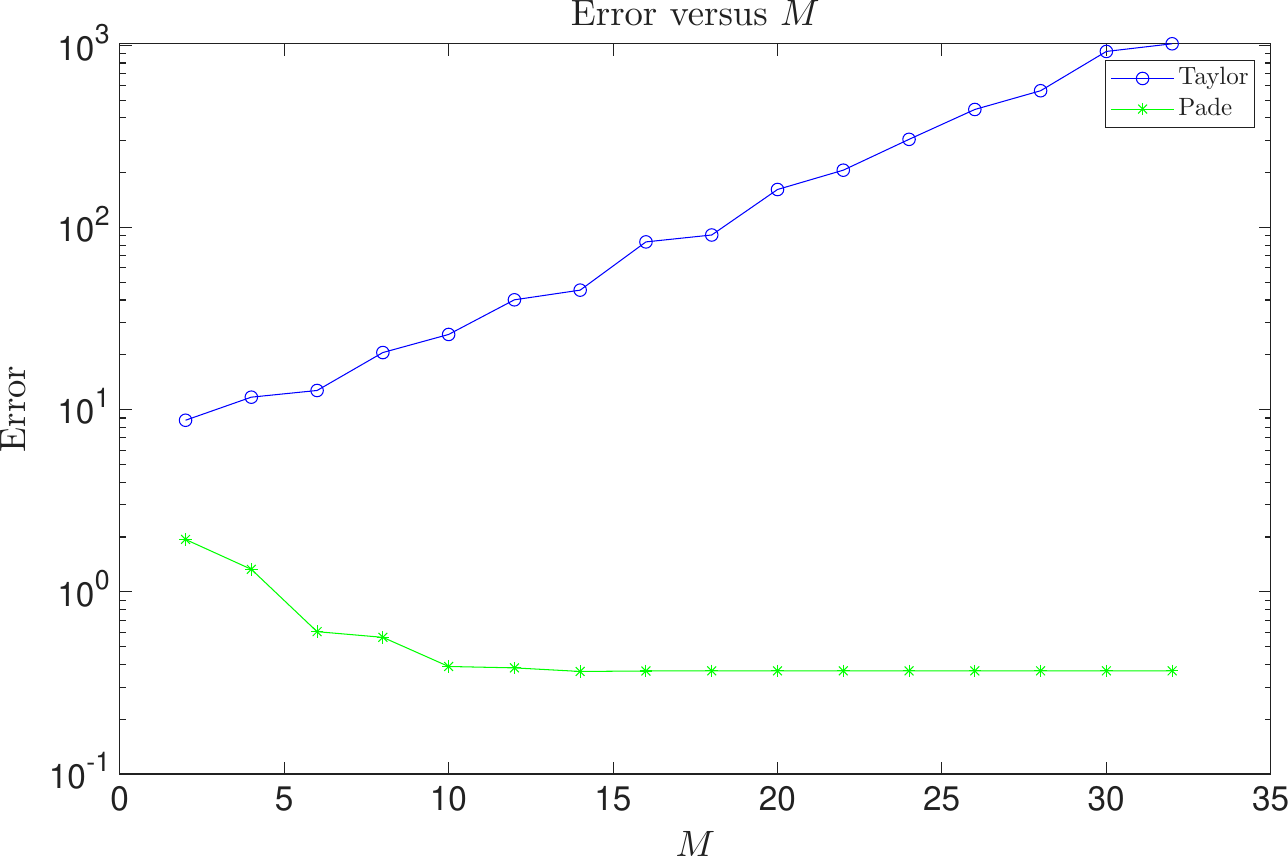}
  \caption{Error versus $M$ for $\chi' = 0.04$ compared
  with the exact solution: LCP.}
  \label{Fig:ErrM:Exact:Large:LCP}
  \end{center}
\end{figure}
\begin{figure}[hbt]
  \begin{center}
  \includegraphics[width=0.8\textwidth]{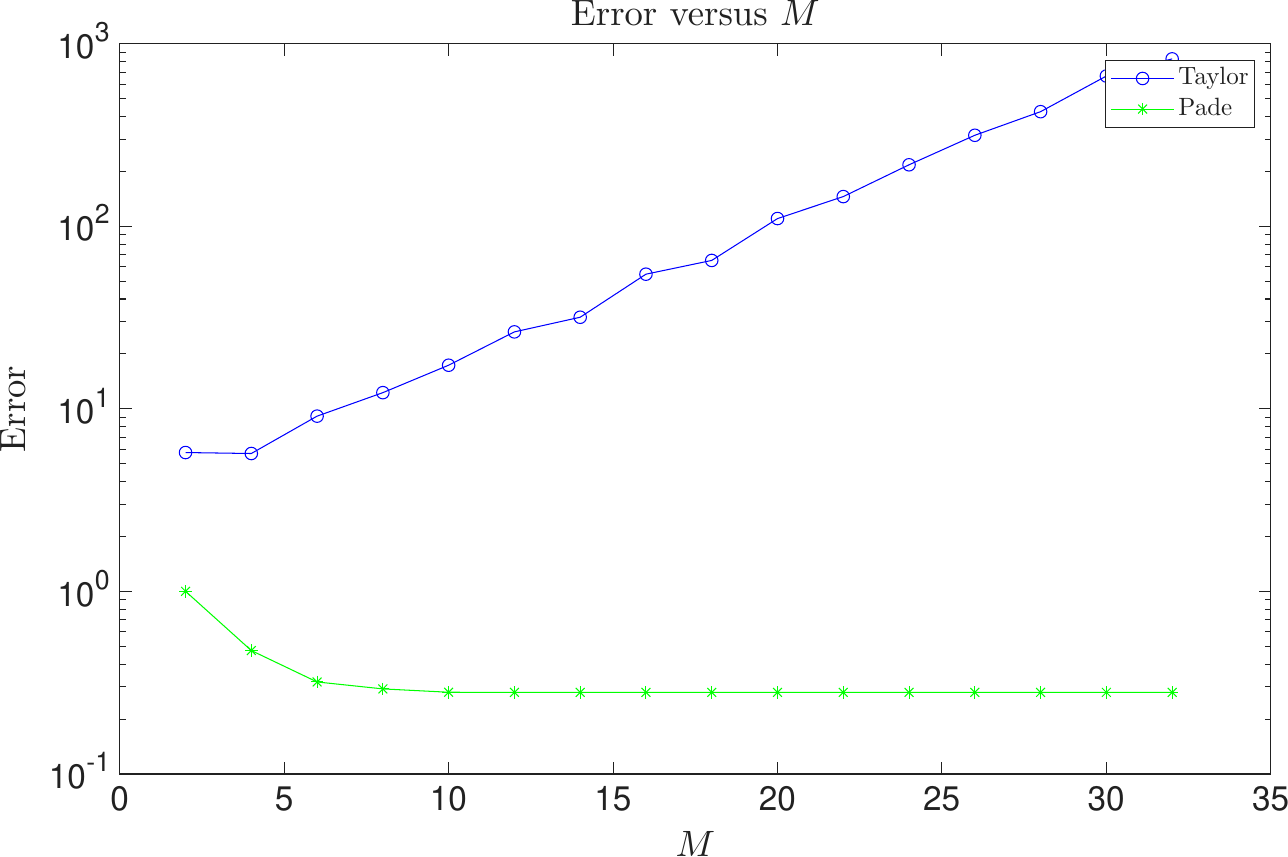}
  \caption{Error versus $M$ for $\chi' = 0.04$ compared
  with the exact solution: RCP.}
  \label{Fig:ErrM:Exact:Large:RCP}
  \end{center}
\end{figure}
%

%
%

\subsection{Convergence to Smoothed Solution}
\label{Sec:ConvSmooth}

As we mentioned above, our results thus far have been somewhat
disappointing in light of the spectral rate of convergence we
were anticipating. To address this concern we repeated these
experiments where the exact solution is replaced by a
``smoothed'' one.  For this we took advantage of the fact
that if $\rho$ is independent of $x$ then the solution of
\eqref{Eqn:LCPRCP} can be expanded in a (generalized)
Fourier series as
\bes
v(x,z) = \sump \hat{v}_p(z) e^{i \alpha_p x},
\ees
where the resulting $\hat{v}_p(z)$ satisfy
\bse
\label{Eqn:TwoPointBVP}
\begin{align}
& \rho(z) \pz \left[ \rho(z) \pz \hat{v}_p(z) \right]
  + (\kz^2 - \rho(z)^2 \alpha_p^2) \hat{v}_p(z) = 0,
  && -h < z < h, \\
& -\pz \hat{v}_p - (-i \gammau_p) \hat{v}_p = \hat{\tau}_p, 
  && z = h, \\
& \pz \hat{v}_p - (-i \gammaw_p) \hat{v}_p = 0,
  && z = -h,
\end{align}
\ese
and
\bes
\hat{\tau}_p = \frac{1}{d} \int_0^d \tau(x) e^{-i \alpha_p x} \dx,
\ees
is the $p$--th Fourier coefficient of $\tau(x)$ \cite{Yeh05}.
If $\rho(z)$ is piecewise constant then one can express the
solution in each homogeneous layer as upward-- and 
downward--propagating modes, c.f. \eqref{Eqn:Flat:Exact},
and jump conditions at the interfaces give rise to the readily
solved Fresnel equations. To
avoid the low regularity of the resulting solutions, we
followed our HOPE approach and \textit{approximated} the piecewise
constant chirality by the smoothed form \eqref{Eqn:X:Smooth}.
While this yields a much
smoother approximate solution, one must utilize a numerical
algorithm to solve \eqref{Eqn:TwoPointBVP}. For this
we selected the Chebyshev method 
\cite{GottliebOrszag77,Boyd01,ShenTangWang11}
using the same number of Chebyshev coefficients, specified
by $N_z$, as the relevant HOPE simulation.

We revisited our earlier simulations and report in 
Figures~\ref{Fig:ErrNx:Smooth:Small:LCP},
\ref{Fig:ErrNx:Smooth:Small:RCP},
\ref{Fig:ErrM:Smooth:Small:LCP},
and 
\ref{Fig:ErrM:Smooth:Small:RCP} results of $N_x=N_z$ and
$M$ refinement, respectively, in the small deviation
case $\chi' = 0.01$. Now we notice on these log--linear
plots the exponential rates of convergence we were hoping
to observe. We also notice an enhanced rate of convergence,
with Pad\'e approximation,
even within the disk of convergence which has been
observed in the past.
%
%
%
%
%
\begin{figure}[hbt]
  \begin{center}
  \includegraphics[width=0.8\textwidth]{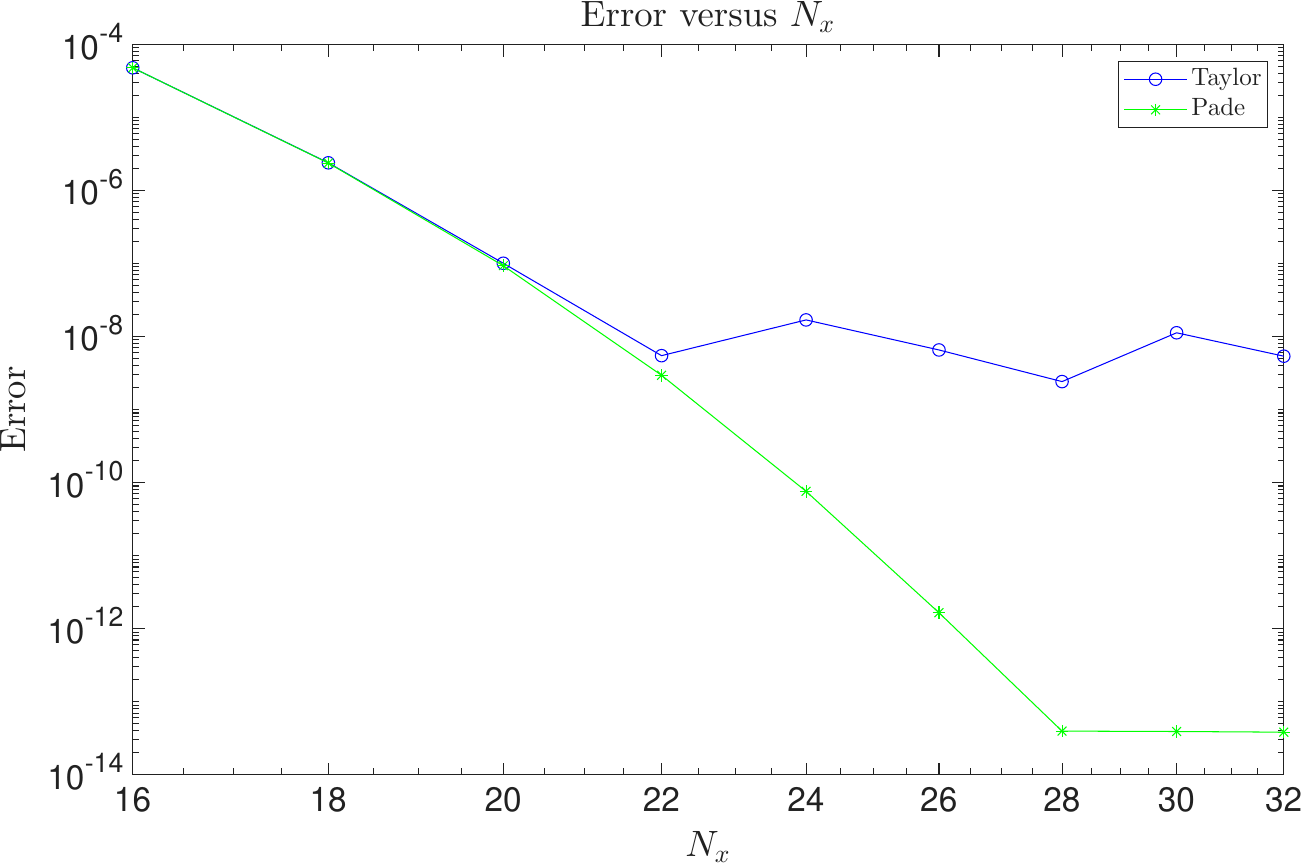}
  \caption{Error versus $N_x$ for $\chi' = 0.01$ compared
  with the smoothed exact solution: LCP.}
  \label{Fig:ErrNx:Smooth:Small:LCP}
  \end{center}
\end{figure}
\begin{figure}[hbt]
  \begin{center}
  \includegraphics[width=0.8\textwidth]{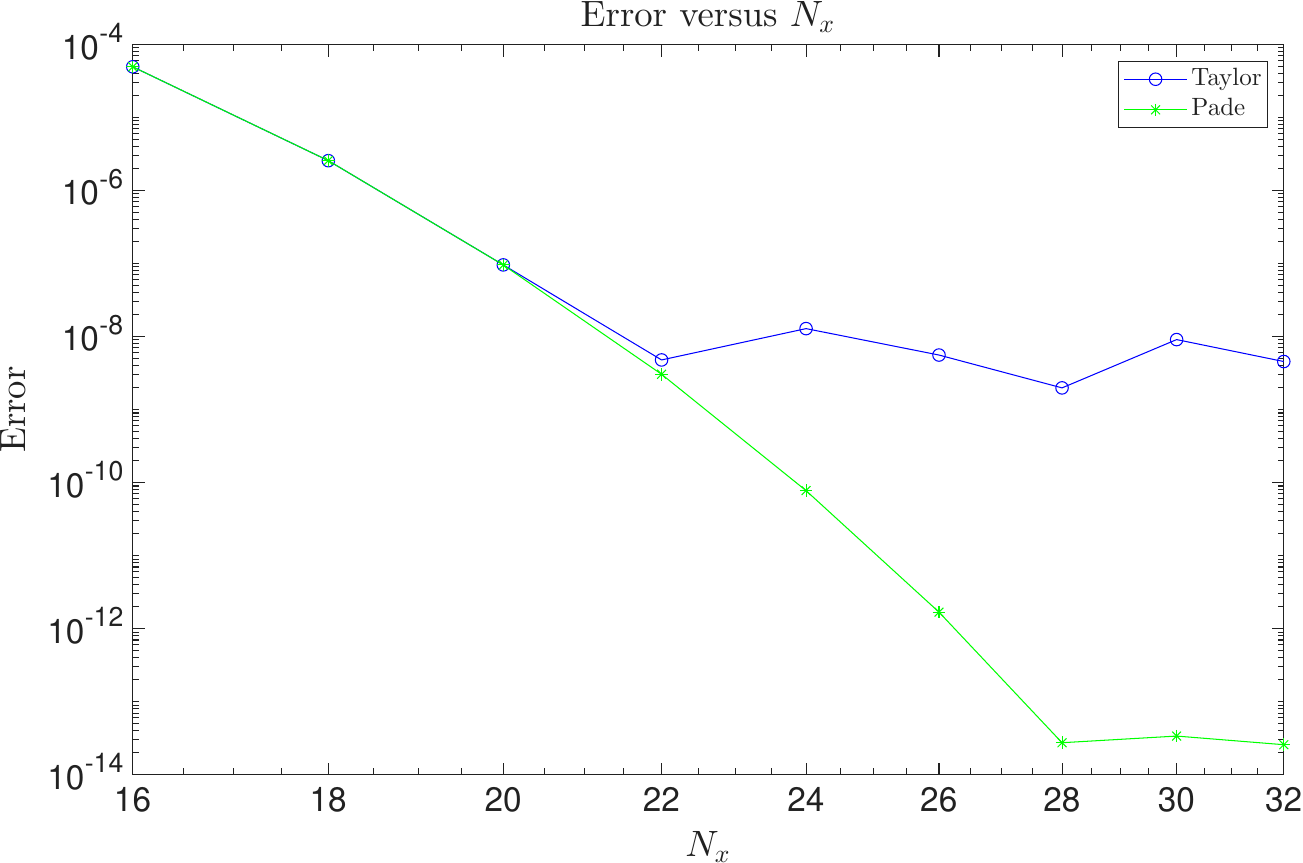}
  \caption{Error versus $N_x$ for $\chi' = 0.01$ compared
  with the smoothed exact solution: RCP.}
  \label{Fig:ErrNx:Smooth:Small:RCP}
  \end{center}
\end{figure}
%
%
%
%
%
\begin{figure}[hbt]
  \begin{center}
  \includegraphics[width=0.8\textwidth]{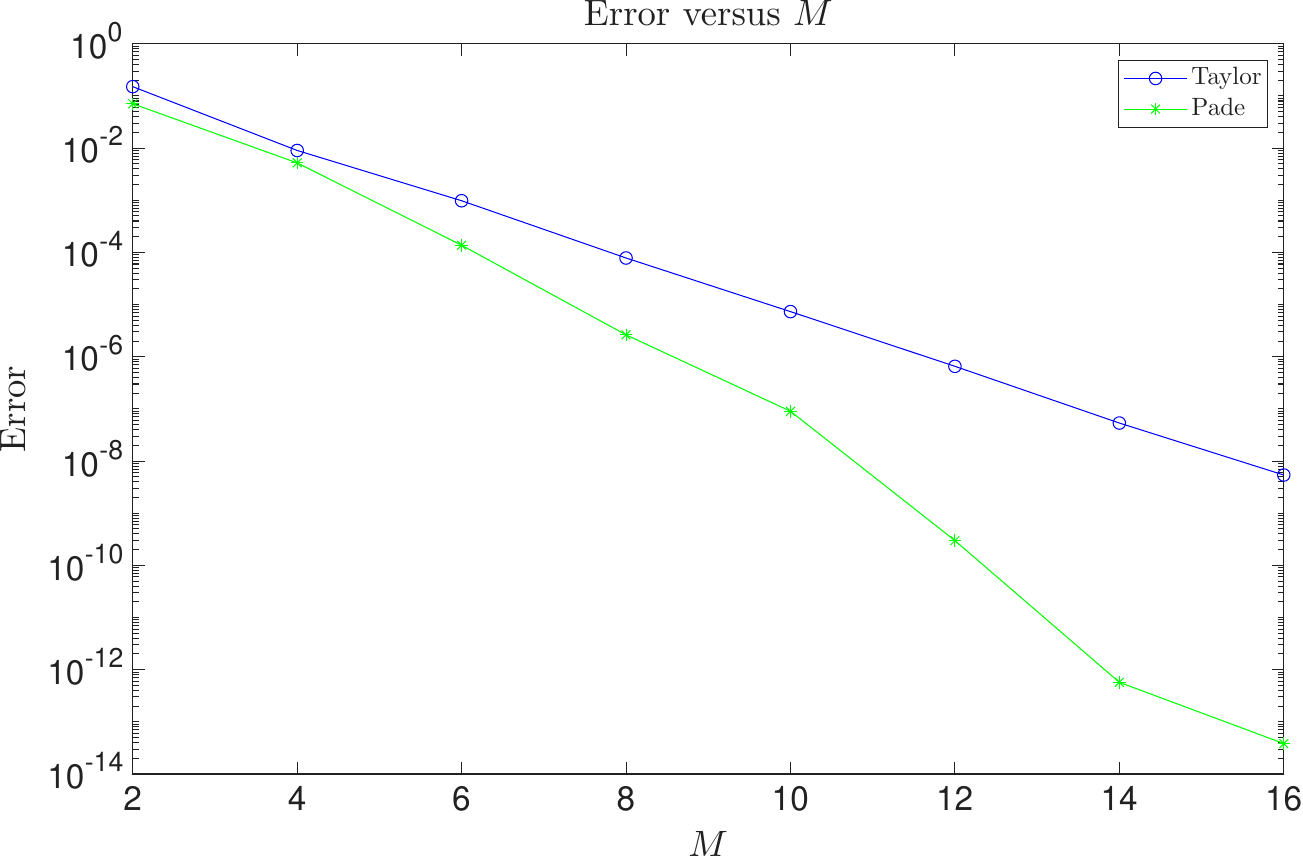}
  \caption{Error versus $M$ for $\chi' = 0.01$ compared
  with the smoothed exact solution: LCP.}
  \label{Fig:ErrM:Smooth:Small:LCP}
  \end{center}
\end{figure}
\begin{figure}[hbt]
  \begin{center}
  \includegraphics[width=0.8\textwidth]{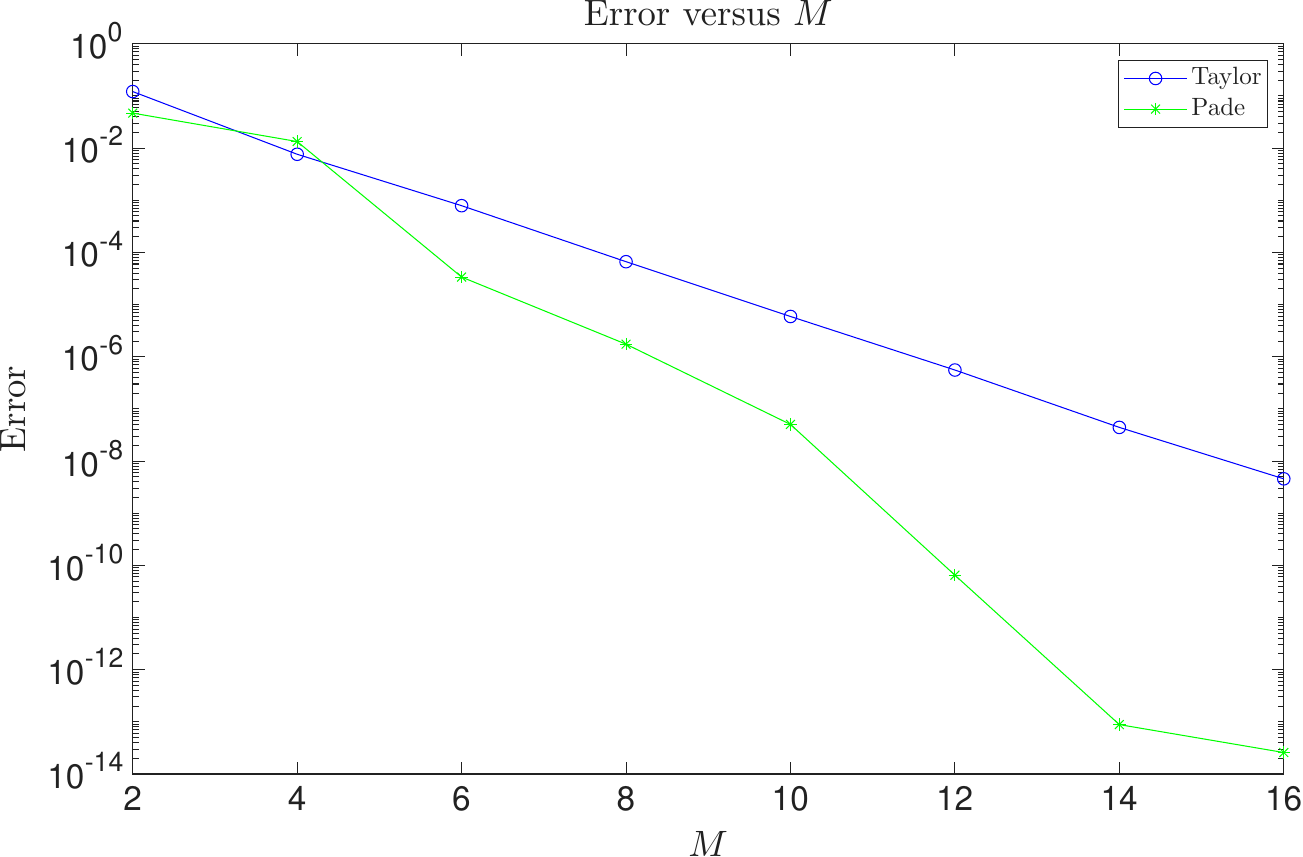}
  \caption{Error versus $M$ for $\chi' = 0.01$ compared
  with the smoothed exact solution: RCP.}
  \label{Fig:ErrM:Smooth:Small:RCP}
  \end{center}
\end{figure}

We close with the large deviation case, $\chi' = 0.04$,
and plot the outcomes of our simulations in
Figures~\ref{Fig:ErrNx:Smooth:Large:LCP},
\ref{Fig:ErrNx:Smooth:Large:RCP},
\ref{Fig:ErrM:Smooth:Large:LCP},
and
\ref{Fig:ErrM:Smooth:Large:RCP} for increasing $N_x=N_z$
and $M$, respectively. As $\chi' = 0.04$ pushes the
value $\delta=-1$ outside the disk of analyticity of
\eqref{Eqn:v:Exp} we cannot expect reliable results
from Taylor summation (which we observe), however,
Pad\'e summation delivers highly resolved answers
in a rapid and robust fashion.
%
%
%
%
%
\begin{figure}[hbt]
  \begin{center}
  \includegraphics[width=0.8\textwidth]{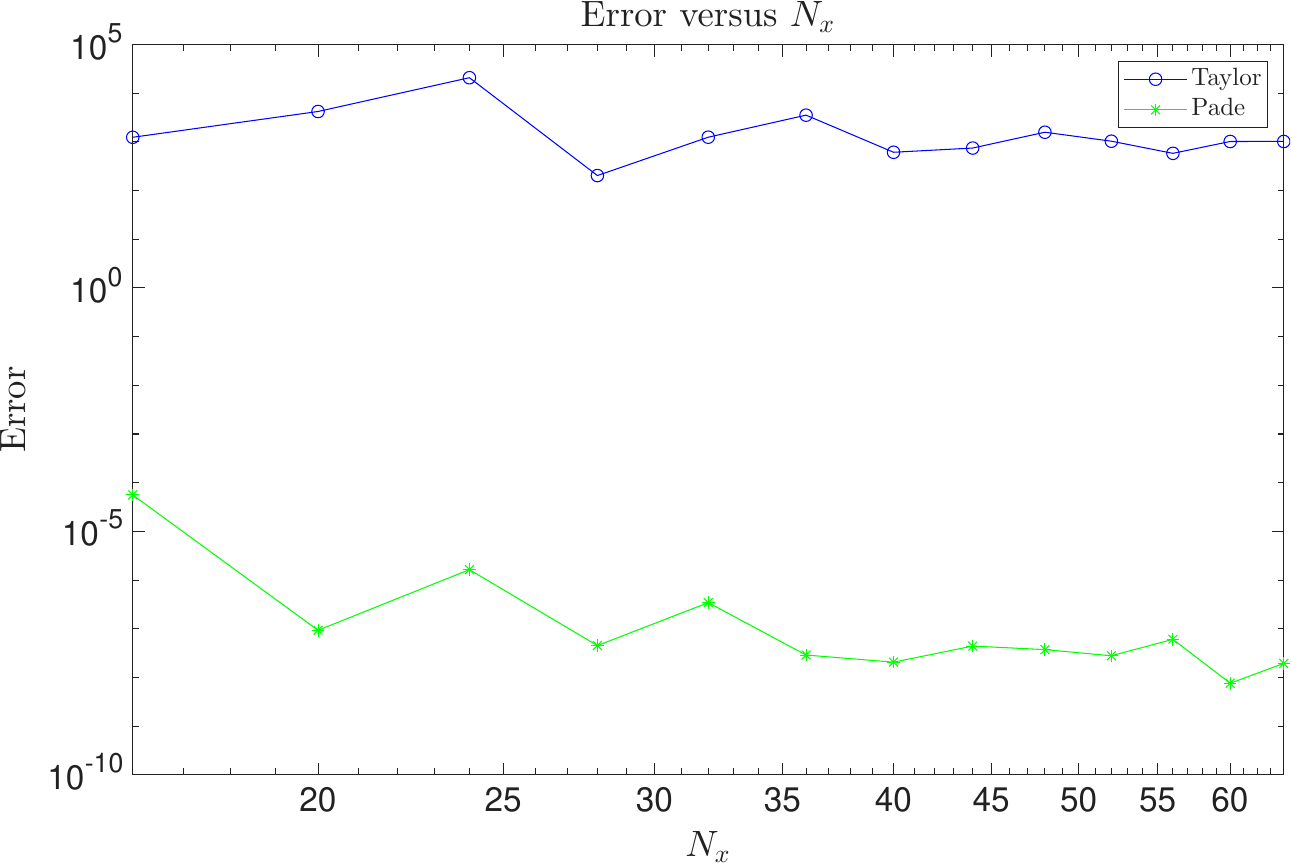}
  \caption{Error versus $N_x$ for $\chi' = 0.04$ compared
  with the smoothed exact solution: LCP.}
  \label{Fig:ErrNx:Smooth:Large:LCP}
  \end{center}
\end{figure}
\begin{figure}[hbt]
  \begin{center}
  \includegraphics[width=0.8\textwidth]{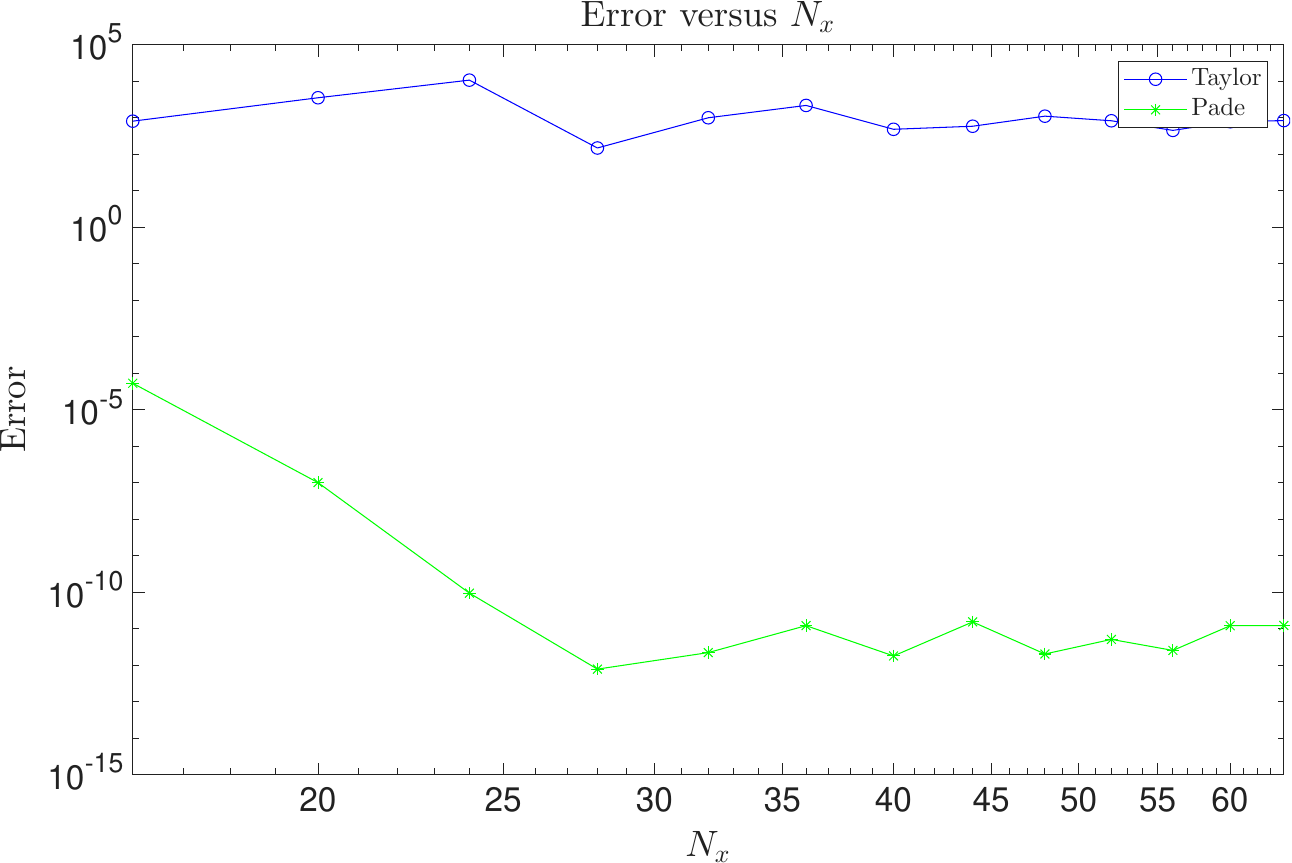}
  \caption{Error versus $N_x$ for $\chi' = 0.04$ compared
  with the smoothed exact solution: RCP.}
  \label{Fig:ErrNx:Smooth:Large:RCP}
  \end{center}
\end{figure}
%
%
%
%
%
\begin{figure}[hbt]
  \begin{center}
  \includegraphics[width=0.8\textwidth]{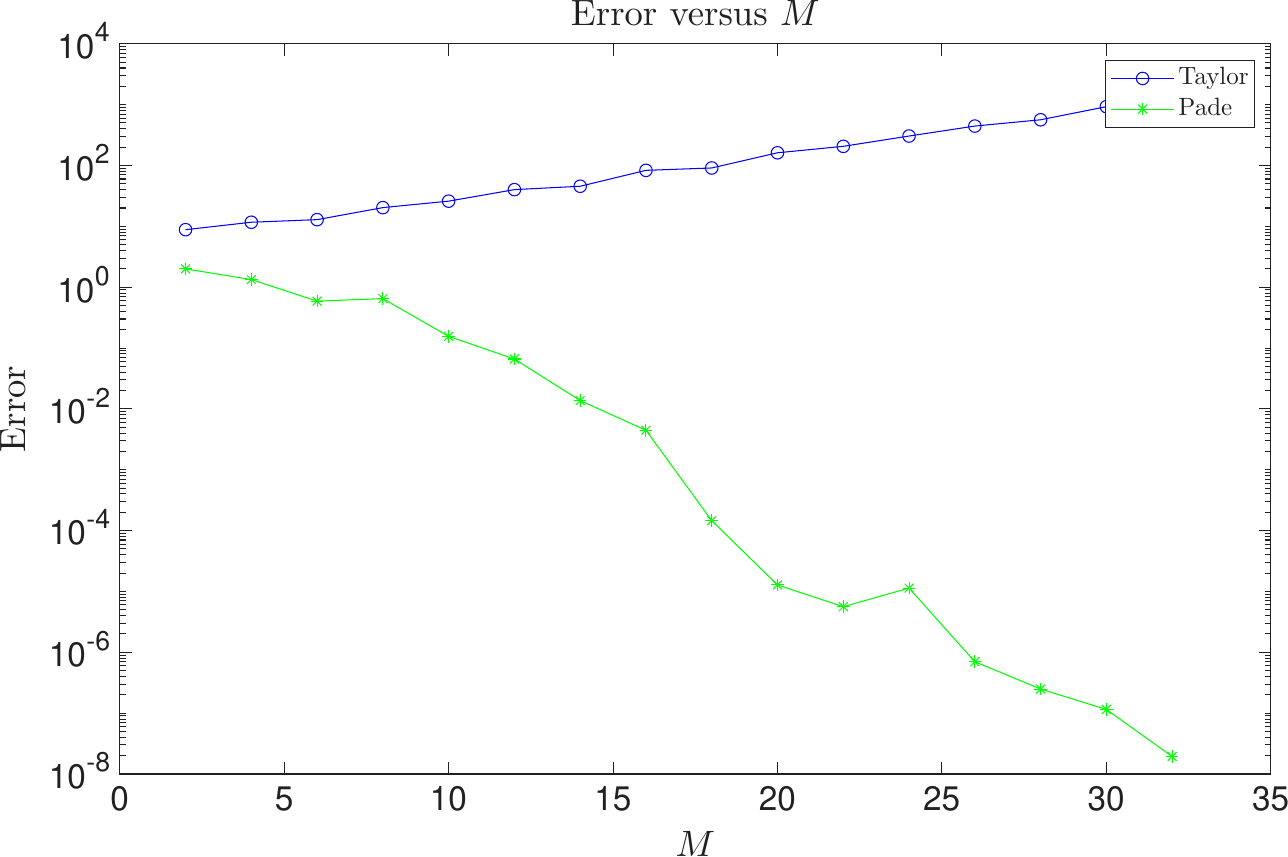}
  \caption{Error versus $M$ for $\chi' = 0.04$ compared
  with the smoothed exact solution: LCP.}
  \label{Fig:ErrM:Smooth:Large:LCP}
  \end{center}
\end{figure}
\begin{figure}[hbt]
  \begin{center}
  \includegraphics[width=0.8\textwidth]{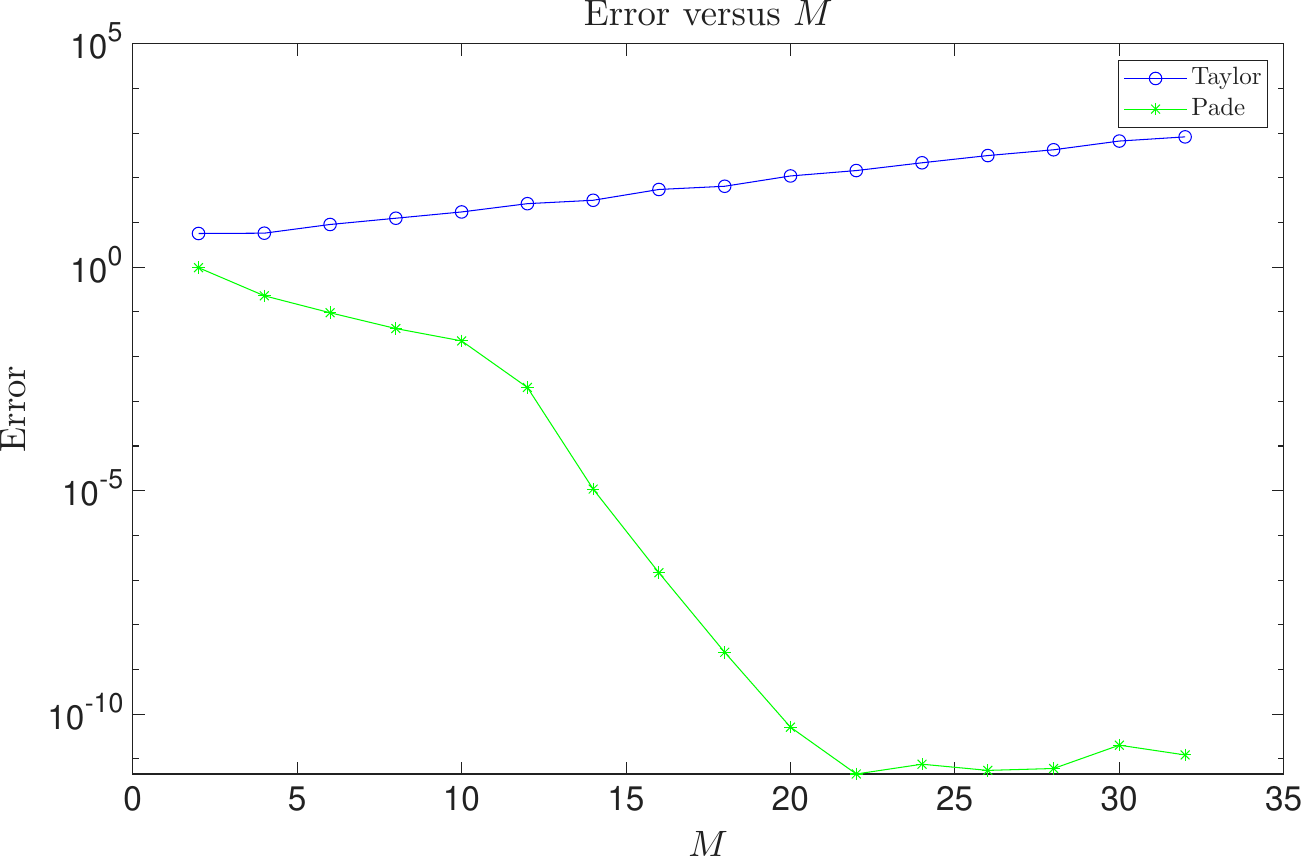}
  \caption{Error versus $M$ for $\chi' = 0.04$ compared
  with the smoothed exact solution: RCP.}
  \label{Fig:ErrM:Smooth:Large:RCP}
  \end{center}
\end{figure}
%

%
%

\section{Conclusion}
\label{Sec:Conc}

In this work we have considered the interaction of electromagnetic
waves with a periodic chiral structure. We focused on the special
case of a uniform permeability/permeability, $y$--invariant 
structure with radiation aligned with this axis. In this case
the governing equations can be decoupled into Left-- and
Right--Circular polarizations (LCP/RCP) for \textit{scalar} fields,
and we have proposed a High--Order Perturbation of Envelopes (HOPE)
approach for their solution. We established two novel results:
First that these LCP/RCP fields are not only analytic in small
envelope deformation, but also have a domain of analyticity
which includes the entire real axis. Additionally, we showed
that the fields depend \textit{jointly} analytically upon
the deviation parameters and the spatial variables. Beyond
this we demonstrated how a numerical implementation of this
HOPE algorithm can be used to approximate solutions in a rapid,
robust, and reliable manner.

%
%

\section*{Acknowledgments}

D.P.N.\ gratefully acknowledges support from the National Science
Foundation through grant
No.~DMS--2111283.

%
%


%
%

\bibliography{nicholls}

@article{NichollsReitich99,
  fauthor = {David P. Nicholls and Fernando Reitich},
  author = {D. P. Nicholls and F. Reitich},
  title = {A new approach to analyticity of {D}irichlet-{N}eumann
           operators},
  journal = {Proc. Roy. Soc. Edinburgh Sect. A},
  fjournal = {Proceedings of the Royal Society of Edinburgh. Section A.
              Mathematics},
  year = {2001},
  volume = {131},
  number = {6},
  pages = {1411--1433} }

@article{NichollsReitich00b,
  fauthor = {David P. Nicholls and Fernando Reitich},
  author = {D. P. Nicholls and F. Reitich},
  title = {Analytic Continuation of {D}irichlet-{N}eumann Operators},
  journal = {Numer. Math.},
  fjournal = {Numerische Mathematik},
  year = {2003},
  volume = {94},
  number = {1},
  pages = {107--146}}

@article{NichollsTaber06,
  fauthor = {David P. Nicholls and Mark Taber},
  author = {D. P. Nicholls and M. Taber},
  title = {Joint Analyticity and Analytic Continuation for 
           {D}irichlet--{N}eumann Operators on Doubly Perturbed Domains},
  journal = {J. Math. Fluid Mech.},
  fjournal = {Journal of Mathematical Fluid Mechanics},
  volume = {10},
  number = {2},
  pages = {238--271},
  year = {2008} }

@article{Nicholls19b,
  author = {D. P. Nicholls},
  title = {A High--Order Perturbation of Envelopes ({HOPE}) Method for
    Scattering by Periodic Inhomogeneous Media},
  journal = {Quarterly of Applied Mathematics},
  volume = {78},
  pages = {725-757},
  year = {2020}
}

@article{NichollsVo24,
  fauthor = {David P. Nicholls and Liet Vo},
  author = {D. P. Nicholls and L. Vo},
  title = {A High-Order Perturbation of Envelopes ({HOPE}) Method for 
    Vector Electromagnetic Scattering by Periodic Inhomogeneous Media:
    Analytic Continuation},
  journal = {Journal of Differential Equations},
  volume = {422},
  pages = {106--151},
  year = {2025} }

@article{NichollsVo23,
  fauthor = {David P. Nicholls and Liet Vo},
  author = {D. P. Nicholls and L. Vo},
  title = {A High-Order Perturbation of Envelopes ({HOPE}) Method for 
    Vector Electromagnetic Scattering by Periodic Inhomogeneous Media:
    Joint Analyticity},
  journal = {SIAM Journal on Applied Mathematics},
  volume = {85},
  issue = {2},
  pages = {755--778},
  year = {2025} }

@book {ShenTangWang11,
    AUTHOR = {Shen, Jie and Tang, Tao and Wang, Li-Lian},
     TITLE = {Spectral methods},
    SERIES = {Springer Series in Computational Mathematics},
    VOLUME = {41},
      NOTE = {Algorithms, analysis and applications},
 PUBLISHER = {Springer, Heidelberg},
      YEAR = {2011},
     PAGES = {xvi+470}
}

@book {Petit80,
     TITLE = {Electromagnetic theory of gratings},
    FEDITOR = {Petit, Roger},
    EDITOR = {Petit, R.},
 PUBLISHER = {Springer-Verlag},
   ADDRESS = {Berlin},
      YEAR = {1980},
     PAGES = {xv+284}
}

@book{TaroudakisMakrakis01,
   AUTHOR = {Taroudakis, Michael and Makrakis, George},
    TITLE = {Inverse problems in underwater acoustics},
PUBLISHER = {Springer-Verlag},
  ADDRESS = {New York},
     YEAR = {2001},
    PAGES = {216}
}

@phdthesis{ArensHab,
  Type = {Habilitationsschrift},
  Title = {Scattering by Biperiodic Layered Media:
    The Integral Equation Approach},
  Author = {Tilo Arens},
  School = {Karlsruhe Institute of Technology},
  Year = {2009},
}

@article {AmmariBao08,
    AUTHOR = {Ammari, Habib and Bao, Gang},
     TITLE = {Coupling of finite element and boundary element methods for
              the scattering by periodic chiral structures},
   JOURNAL = {J. Comput. Math.},
  FJOURNAL = {Journal of Computational Mathematics},
    VOLUME = {26},
      YEAR = {2008},
    NUMBER = {3},
     PAGES = {261--283}
}

@article {AmmariBao03,
    AUTHOR = {Ammari, Habib and Bao, Gang},
     TITLE = {Maxwell's equations in periodic chiral structures},
   JOURNAL = {Math. Nachr.},
  FJOURNAL = {Mathematische Nachrichten},
    VOLUME = {251},
      YEAR = {2003},
     PAGES = {3--18}
}

@article {AmmariBao98,
    AUTHOR = {Ammari, Habib and Bao, Gang},
     TITLE = {Analysis of the diffraction from periodic chiral structures},
   JOURNAL = {C. R. Acad. Sci. Paris S\'er. I Math.},
  FJOURNAL = {Comptes Rendus de l'Acad\'emie des Sciences. S\'erie I.
              Math\'ematique},
    VOLUME = {326},
      YEAR = {1998},
    NUMBER = {12},
     PAGES = {1371--1376}
}

@article{GallinetButetMartin15,
  author = {B. Gallinet and J. Butet and O. J. F. Martin},
  title = {Numerical methods for nanophotonics: Standard problems and 
    future challenges},
  journal = {Laser and Photonics Reviews},
  volume = {9},
  year = {2015},
  pages = {577–603}}

@book {TafloveHagness00,
    AUTHOR = {Taflove, Allen and Hagness, Susan C.},
     TITLE = {Computational electrodynamics: the finite-difference
              time-domain method},
   EDITION = {Second},
 PUBLISHER = {Artech House, Inc., Boston, MA},
      YEAR = {2000},
     PAGES = {xxiv+852}
}

@book {Rumpf22,
  author = {Raymond Rumpf},
  title = {Electromagnetic and Photonic Simulation for the Beginner: 
    Finite-Difference Frequency-Domain in MATLAB},
  publisher = {Artech House, Inc., Boston, MA},
  year = {2020},
  pages = {350} }

@book {Jin02,
    AUTHOR = {Jin, Jianming},
     TITLE = {The finite element method in electromagnetics},
   EDITION = {Second},
 PUBLISHER = {Wiley-Interscience [John Wiley \& Sons], New York},
      YEAR = {2002},
     PAGES = {xxvi+753},
}

@article{BuschKonigNiegmann11,
  author = {Busch, K. and K\"onig, M. and Niegemann, J.},
  title = {Discontinuous {G}alerkin methods in nanophotonics},
  journal = {Laser \& Photonics Reviews},
  volume = {5},
  number = {6},
  pages = {773-809},
  year = {2011}
}

@article{MoharamGaylord81,
author = {M. G. Moharam and T. K. Gaylord},
journal = {J. Opt. Soc. Am.},
number = {7},
pages = {811--818},
publisher = {Optica Publishing Group},
title = {Rigorous coupled-wave analysis of planar-grating diffraction},
volume = {71},
month = {Jul},
year = {1981}
}

@article{MoharamPommetGrannGaylord95,
author = {M. G. Moharam and Drew A. Pommet and Eric B. Grann and 
  T. K. Gaylord},
journal = {J. Opt. Soc. Am. A},
number = {5},
pages = {1077--1086},
publisher = {Optica Publishing Group},
title = {Stable implementation of the rigorous coupled-wave analysis for 
  surface-relief gratings: enhanced transmittance matrix approach},
volume = {12},
month = {May},
year = {1995}
}

@article{LalanneMorris96,
author = {Philippe Lalanne and G. Michael Morris},
journal = {J. Opt. Soc. Am. A},
number = {4},
pages = {779--784},
publisher = {Optica Publishing Group},
title = {Highly improved convergence of the coupled-wave method for 
  {TM} polarization},
volume = {13},
month = {Apr},
year = {1996}
}

@book {KimParkLee12,
    AUTHOR = {Kim, Hwi and Park, Junghyun and Lee, Byoungho},
     TITLE = {Fourier modal method and its applications in computational
              nanophotonics},
 PUBLISHER = {CRC Press, Boca Raton, FL},
      YEAR = {2012},
     PAGES = {xii+313}
}

@article{MartinPiller98,
  title = {Electromagnetic scattering in polarizable backgrounds},
  author = {Martin, Olivier J. F. and Piller, Nicolas B.},
  journal = {Phys. Rev. E},
  volume = {58},
  issue = {3},
  pages = {3909--3915},
  year = {1998},
  month = {Sep},
  publisher = {American Physical Society}
}

@article{DraineFlatau94,
  author = {Bruce T. Draine and Piotr J. Flatau},
  journal = {J. Opt. Soc. Am. A},
  number = {4},
  pages = {1491--1499},
  publisher = {Optica Publishing Group},
  title = {Discrete-Dipole Approximation For Scattering Calculations},
  volume = {11},
  month = {Apr},
  year = {1994}
}

@article{MRBJA93,
  title = {Accurate theoretical analysis of photonic band-gap materials},
  author = {Meade, R. D. and Rappe, A. M. and Brommer, K. D. and 
    Joannopoulos, J. D. and Alerhand, O. L.},
  journal = {Phys. Rev. B},
  volume = {48},
  issue = {11},
  pages = {8434--8437},
  year = {1993},
  month = {Sep}
}

@article{JohnsonJoannopoulos01,
  author = {Steven G. Johnson and J. D. Joannopoulos},
  journal = {Opt. Express},
  number = {3},
  pages = {173--190},
  publisher = {Optica Publishing Group},
  title = {Block-iterative frequency-domain methods for {M}axwell's 
    equations in a planewave basis},
  volume = {8},
  month = {Jan},
  year = {2001}
}

@book {Kress14,
    AUTHOR = {Kress, Rainer},
     TITLE = {Linear integral equations},
   EDITION = {Third},
 PUBLISHER = {Springer-Verlag},
   ADDRESS = {New York},
      YEAR = {2014},
     PAGES = {xvi+412}
}

@book{Lakhtakia94,
  author = {Lakhtakia, A},
  title = {Beltrami Fields in Chiral Media},
  publisher = {WORLD SCIENTIFIC},
  year = {1994}
}

@book {LVV89Book,
  AUTHOR = {Lakhtakia, A. and Varadan, V. K. and Varadan, V. V.},
  TITLE = {Time-harmonic electromagnetic fields in chiral media},
  SERIES = {Lecture Notes in Physics},
  VOLUME = {335},
  PUBLISHER = {Springer-Verlag, Berlin},
  YEAR = {1989},
  PAGES = {viii+121}
}

@book{ChiralityChemistry2024,
  editor = {Janine Cossy},
  booktitle = {Comprehensive Chirality (Second Edition)},
  title = {Comprehensive Chirality (Second Edition)},
  publisher = {Academic Press},
  edition = {Second},
  address = {Oxford},
  pages = {6300},
  year = {2024} }

@book{ChiralityBiology2019,
  author = {Gyula Palyi},
  booktitle = {Biological Chirality},
  title = {Biological Chirality},
  publisher = {Academic Press},
  edition = {First},
  pages = {268},
  year = {2019} }

@book{ChiralityPhysics2023,
  author = {Babaev, Egor and Kharzeev, Dmitri and Larsson, Mats and 
    Molochkov, Alexander and Zhaunerchyk, Vitali},
  title = {Chiral Matter},
  publisher = {WORLD SCIENTIFIC},
  year = {2023},
  doi = {10.1142/13107} }

@article{Tokunaga18,
  author = {Tokunaga, Etsuko and Yamamoto, Takeshi and 
    Ito, Emi and Shibata, Norio},
  date = {2018/11/20},
  doi = {10.1038/s41598-018-35457-6},
  journal = {Scientific Reports},
  number = {1},
  pages = {17131},
  title = {Understanding the Thalidomide Chirality in Biological 
    Processes by the Self-disproportionation of Enantiomers},
  url = {https://doi.org/10.1038/s41598-018-35457-6},
  volume = {8},
  year = {2018} }

@article{Blaschke79,
  author = {Blaschke, G. and Kraft, H.P. and Fickentscher, K. and
    K\"{o}hler, F.},
  title = {Chromatographische Racemattrennung von Thalidomid und 
    teratogene Wirkung der Enantiomere [Chromatographic separation of 
    racemic thalidomide and teratogenic activity of its enantiomers
    (author's transl)]},
  journal = {Arzneimittelforschung},
  year = {1979},
  volume = {29},
  number = {10},
  pages = {1640-2},
  language = {German} }

@book{GroganWinston23,
  author = {Grogan DP, Winston NR},
  title = {Thalidomide},
  howpublished = {\url{https://www.ncbi.nlm.nih.gov/books/NBK557706/}},
  publisher = {StatPearls Publishing, Treasure Island, FL},
  year = {2023}
  }

@article{Mun20,
  author = {Mun, Jungho and Kim, Minkyung and Yang, Younghwan and
    Badloe, Trevon and Ni, Jincheng and Chen, Yang and Qiu, Cheng-Wei and
    Rho, Junsuk},
  year = {2020},
  title = {Electromagnetic chirality: from fundamentals to 
    nontraditional chiroptical phenomena},
  journal = {Light: Science \& Applications},
  pages = {139},
  volume = {9},
  issue = {1}
}

@article{Khaliq23,
author = {Khaliq, Hafiz Saad and Nauman, Asad and Lee, Jae-Won and 
  Kim, Hak-Rin},
title = {Recent Progress on Plasmonic and Dielectric Chiral Metasurfaces: 
  Fundamentals, Design Strategies, and Implementation},
journal = {Advanced Optical Materials},
volume = {11},
number = {16},
pages = {2300644},
year = {2023}
}

@article{Wang25,
author = {Wang, Shenli and Li, Haoyu and Fan, Shengshi and 
  Chen, Lishui and Zhou, Haibo and Pérez-Juste, Jorge and 
  Pastoriza-Santos, Isabel and Wong, Kwok-yin and Zheng, Guangchao},
title = {Chiral Plasmonic Sensors: Fundamentals and Emerging Applications},
journal = {Angewandte Chemie International Edition},
volume = {64},
number = {51},
pages = {e202514816},
doi = {https://doi.org/10.1002/anie.202514816},
url = {https://onlinelibrary.wiley.com/doi/abs/10.1002/anie.202514816},
year = {2025}
}

@article{Li25,
author = {Haoyu Li and Weixiang Ye and Lakshminarayana Polavarapu and 
  Juan Xie and Kwok-Yin Wong and Guangchao Zheng},
journal = {Opt. Lett.},
number = {9},
pages = {2900--2903},
publisher = {Optica Publishing Group},
title = {Chiral plasmonic superlattice resonance based on metasurfaces 
  for chiral molecular sensors},
volume = {50},
month = {May},
year = {2025},
url = {https://opg.optica.org/ol/abstract.cfm?URI=ol-50-9-2900},
doi = {10.1364/OL.559690}
}

@article{Jeong16,
author = {Jeong, Hyeon-Ho and Mark, Andrew G. and 
  Alarc{\'o}n-Correa, Mariana and Kim, Insook and Oswald, Peter and 
  Lee, Tung-Chun and Fischer, Peer},
  date = {2016/04/19},
  doi = {10.1038/ncomms11331},
  journal = {Nature Communications},
  number = {1},
  pages = {11331},
  title = {Dispersion and shape engineered plasmonic nanosensors},
  volume = {7},
  year = {2016}
}

@article{Luo25,
author = {Luo, Taotao and Li, Haoyu and Zhang, Zhicheng and 
  Wang, Shenli and Pan, Xiaobin and Mourdikoudis, Stefanos and 
  Xue, Chao and Li, Junjun and Wong, Kwok-Yin and Zheng, Guangchao},
title = {Super-Heterostructures of Twisted {P}d Nanoarrays Epitaxially 
  Grown on Chiral {A}u Nanorods Boost Circularly Polarized Photocatalysis},
journal = {Advanced Science},
volume = {12},
number = {26},
pages = {2502848},
doi = {https://doi.org/10.1002/advs.202502848},
year = {2025}
}

@article{Kumar18,
author = {Jatish Kumar  and Hasier Eraña  and Elena López-Martínez  and 
  Nathalie Claes  and Víctor F. Martín  and Diego M. Solís  and 
  Sara Bals  and Aitziber L. Cortajarena  and Joaquín Castilla  and 
  Luis M. Liz-Marzán },
title = {Detection of amyloid fibrils in {P}arkinson’s disease using 
  plasmonic chirality},
journal = {Proceedings of the National Academy of Sciences},
volume = {115},
number = {13},
pages = {3225-3230},
year = {2018},
doi = {10.1073/pnas.1721690115},
}

@article{Kim22,
  author = {Kim, Ryeong Myeong and Huh, Ji-Hyeok and Yoo, SeokJae and 
    Kim, Tae Gyun and Kim, Changwon and Kim, Hyeohn and
    Han, Jeong Hyun and Cho, Nam Heon and Lim, Yae-Chan and 
    Im, Sang Won and Im, EunJi and Jeong, Jae Ryeol and 
    Lee, Min Hyung and Yoon, Tae-Young and Lee, Ho-Young and 
    Park, Q-Han and Lee, Seungwoo and Nam, Ki Tae},
  date = {2022/12/01},
  doi = {10.1038/s41586-022-05353-1},
  isbn = {1476-4687},
  journal = {Nature},
  number = {7940},
  pages = {470--476},
  title = {Enantioselective sensing by collective circular dichroism},
  url = {https://doi.org/10.1038/s41586-022-05353-1},
  volume = {612},
  year = {2022}
}

@article{Stratis99,
  title={Electromagnetic scattering problems in chiral media: a review},
  author={Stratis, Ioannis G},
  journal={Electromagnetics},
  volume={19},
  number={6},
  pages={547--562},
  year={1999},
  publisher={Taylor \& Francis}}

@article{AthanasiadisCostakisStratis00,
  author={Athanasiadis, Christodoulos and Costakis, George and 
    Stratis, Ioannis G},
  title={Electromagnetic scattering by a homogeneous chiral obstacle 
    in a chiral environment},
  journal={IMA journal of applied mathematics},
  volume={64},
  number={3},
  pages={245--258},
  year={2000},
  publisher={Oxford University Press}}

@article{AthanasiadisMartinStratis99,
  author={Athanasiadis, C and Martin, Paul A and Stratis, Ioannis G},
  title={Electromagnetic scattering by a homogeneous chiral obstacle: 
    boundary integral equations and low-chirality approximations},
  journal={SIAM Journal on Applied Mathematics},
  volume={59},
  number={5},
  pages={1745--1762},
  year={1999},
  publisher={SIAM}}

@article{Rojas94,
  title={Integral equations for {EM} scattering by homogeneous/inhomogeneous 
    two-dimensional chiral bodies},
  author={Rojas, RG},
  journal={IEEE Proceedings-Microwaves, Antennas and Propagation},
  volume={141},
  number={5},
  pages={385--392},
  year={1994},
  publisher={IET}}

@article{GuoWang22,
  author = {Jun Guo and Haibing Wang},
  title = {On the direct and inverse electromagnetic scattering by 
    chiral media},
  journal = {Journal of Differential Equations},
  volume = {317},
  pages = {495-523},
  year = {2022}}

@article{FengWangZhang21,
  author={Feng, Lixin and Wang, Haibing and Zhang, Lei},
  title={The forward and inverse problems for the scattering of 
    obliquely incident electromagnetic waves in a chiral medium},
  journal={Journal of Differential Equations},
  volume={284},
  pages={102--125},
  year={2021},
  publisher={Elsevier}}

@article{Nguyen16,
  author={Nguyen, Dinh-Liem},
  title={The Factorization Method for the {D}rude--{B}orn--{F}edorov Model
    for Periodic Chiral Structures},
  journal={Inverse Problems \& Imaging},
  volume={10},
  number={2},
  year={2016}}

@article{PotthastStratis03,
  author={Potthast, Roland and Stratis, Ioannis G},
  title={On the domain derivative for scattering by impenetrable 
    obstacles in chiral media},
  journal={IMA journal of applied mathematics},
  volume={68},
  number={6},
  pages={621--635},
  year={2003},
  publisher={OUP}}

@article{Gerlach99,
  title={The two-dimensional electromagnetic inverse scattering problem 
    for chiral media},
  author={Gerlach, Thomas},
  journal={Inverse Problems},
  volume={15},
  number={6},
  pages={1663--1675},
  year={1999}}

@article{deMonvelBoutetShepelsky97,
  author={de Monvel, Anne Boutet and Shepelsky, Dimitri},
  title={Direct and inverse scattering problem for a stratified 
    nonreciprocal chiral medium},
  journal={Inverse Problems},
  volume={13},
  number={2},
  pages={239--251},
  year={1997}}

@article{ZhangMa05,
  author = {Deyue Zhang and Fuming Ma},
  title = {An inverse electromagnetic scattering problem for periodic 
    chiral structures},
  journal = {Journal of Physics: Conference Series},
  year = {2005},
  month = {jan},
  volume = {12},
  number = {1},
  pages = {180}}

@article{ZhangMa07,
 author = {Deyue Zhang and Fuming Ma},
 title = {A Finite Element Method with Perfectly Matched Absorbing
   Layers for the Wave Scattering by a Periodic Chiral Structure},
 journal = {Journal of Computational Mathematics},
 number = {4},
 pages = {458--472},
 publisher = {Institute of Computational Mathematics and 
   Scientific/Engineering Computing},
 volume = {25},
 year = {2007}}

@article{ZhangGuoGongWang12,
  author = {Zhang, Deyue and Guo, Yukun and Gong, Chengchun and Wang, Guan},
  title = {Numerical analysis for the scattering by obstacles in a 
    homogeneous chiral environment},
  date = {2012/01/01},
  journal = {Advances in Computational Mathematics},
  number = {1},
  pages = {3--20},
  volume = {36},
  year = {2012}}

@article{Homola08,
  author = {Homola, J.},
  title = {Surface Plasmon Resonance Sensors for Detection of 
    Chemical and Biological Species},
  year = {2008},
  journal = {Chemical Reviews},
  volume = {108},
  number = {2},
  pages = {462--493}}

@book {BaoLi22,
    AUTHOR = {Bao, Gang and Li, Peijun},
     TITLE = {Maxwell's equations in periodic structures},
    SERIES = {Applied Mathematical Sciences},
    VOLUME = {208},
 PUBLISHER = {Springer, Singapore; Science Press Beijing, Beijing},
      YEAR = {2022},
     PAGES = {xi+355}
}

@book{Raether88,
  fauthor = {Raether, Heinz},
  author = {Raether, H.},
  title = {Surface plasmons on smooth and rough surfaces and on gratings},
  publisher = {Springer},
  address = {Berlin},
  year = {1988}
  }

@book{Maier07,
  fauthor = {Maier, Stefan A.},
  author = {Maier, S. A.},
  title = {Plasmonics: Fundamentals and Applications},
  publisher = {Springer},
  address = {New York},
  year = {2007},
  pages = {xxv+223}
}

@book{EnochBonod12,
  fauthor = {Enoch, Stefan and Bonod, Nicolas},
  author = {Enoch, S. and Bonod, N.},
  title = {Plasmonics: From Basics to Advanced Topics},
  series = {Springer Series in Optical Sciences},
  publisher = {Springer},
  address = {New York},
  year = {2012},
  pages = {xvi+321}
}

@book {Jackson75,
    FAUTHOR = {Jackson, John David},
    AUTHOR = {Jackson, J. D.},
     TITLE = {Classical electrodynamics},
   EDITION = {Second},
 PUBLISHER = {John Wiley \& Sons Inc.},
   ADDRESS = {New York},
      YEAR = {1975},
     PAGES = {xxii+848}
}

@book{Yeh05,
  title={Optical waves in layered media},
  author={Yeh, Pochi},
  volume={61},
  year={2005},
  publisher={Wiley-Interscience}
}

@book {TKS85,
   AUTHOR = {Tsang, L. and Kong, J. A. and Shin, R. T.},
    TITLE = {Theory of Microwave Remote Sensing},
PUBLISHER = {Wiley},
  ADDRESS = {New York},
     YEAR = {1985}
}

@book {S02,
  author = {Shull, Peter J.},
  title = {Nondestructive Evaluation: Theory, Techniques, and Applications},
  publisher = {Marcel Dekker},
  year = {2002}
}

@book {Evans10,
    AUTHOR = {Evans, Lawrence C.},
     TITLE = {Partial differential equations},
 PUBLISHER = {American Mathematical Society},
   EDITION = {Second},
   ADDRESS = {Providence, RI},
      YEAR = {2010},
     PAGES = {xxiii+755}
}

@book {GottliebOrszag77,
   FAUTHOR = {Gottlieb, David and Orszag, Steven A.},
   AUTHOR = {Gottlieb, D. and Orszag, S. A.},
    TITLE = {Numerical analysis of spectral methods: theory and
             applications},
     NOTE = {{C}BMS-NSF Regional Conference Series in Applied Mathematics, No.
             26},
PUBLISHER = {Society for Industrial and Applied Mathematics},
  ADDRESS = {Philadelphia, Pa.},
     YEAR = {1977},
    PAGES = {v+172}
}

@book {Boyd01,
    AUTHOR = {Boyd, John P.},
     TITLE = {Chebyshev and {F}ourier spectral methods},
   EDITION = {Second},
 PUBLISHER = {Dover Publications Inc.},
   ADDRESS = {Mineola, NY},
      YEAR = {2001},
     PAGES = {xvi+668}
}

@book {BakerGravesMorris96,
   AUTHOR = {Baker, Jr., George A. and Graves-Morris, Peter},
    TITLE = {{P}ad\'e approximants},
  EDITION = {Second},
PUBLISHER = {Cambridge University Press},
  ADDRESS = {Cambridge},
     YEAR = {1996},
    PAGES = {xiv+746}
}

@book {BenderOrszag78,
    AUTHOR = {Bender, Carl M. and Orszag, Steven A.},
     TITLE = {Advanced mathematical methods for scientists and engineers},
      NOTE = {International Series in Pure and Applied Mathematics},
 PUBLISHER = {McGraw-Hill Book Co.},
   ADDRESS = {New York},
      YEAR = {1978},
     PAGES = {xiv+593}
}

\end{document}